\newtheorem{theorem}{Theorem}[section]
\newtheorem{lemma}[theorem]{Lemma}
\newtheorem{prop}[theorem]{Proposition}
\newtheorem{corollary}[theorem]{Corollary}
\newtheorem{exAux}[theorem]{Example}
\newenvironment{example}{\begin{exAux} \rm}{\end{exAux}}
\newtheorem{Def}[theorem]{Definition}
\newenvironment{defi}{\begin{Def} \rm}{\end{Def}}
\newtheorem{Note}[theorem]{Note}
\newenvironment{note}{\begin{Note} \rm}{\end{Note}}
\newtheorem{Problem}[theorem]{Problem}
\newtheorem{Rem}[theorem]{Remark}
\newtheorem{Not}[theorem]{Notation}
\newtheorem{Conj}[theorem]{Conjecture}
\newtheorem{Ass}[theorem]{Assumption}
\newenvironment{proof}{\medskip\noindent{\bf Proof.\ }}{\qed\medskip}
\newcommand{\qed}{\hfill\mbox{$\Box$\qquad\qquad}}
\newcommand{\F}{\mathbb{F}}
\newcommand{\Matd}{\text{\rm Mat}_{d+1}(\F)}
\newcommand{\vphi}{\varphi}
\newcommand{\A}{{\mathcal A}}
\renewcommand{\b}[1]{\langle #1 \rangle }
\newif\ifDRAFT
\begin{document}

\title{Idempotent systems and  character algebras}

\author{Kazumasa Nomura and Paul Terwilliger}

\maketitle

\medskip

\begin{quote}
\small 
\begin{center}
\bf Abstract
\end{center}
We recently introduced the notion of an idempotent system.
This linear algebraic object is motivated by the structure of an association scheme.
There is a type of idempotent system, said to be symmetric.
In the present paper we classify up to isomorphism the idempotent systems and 
the symmetric idempotent systems.
We also describe how symmetric idempotent systems are related to character algebras.
\end{quote}

\section{Introduction}
\label{sec:intro}

A symmetric association scheme (or SAS) is a combinatorial object that generalizes a
distance-regular graph and a generously transitive permutation group.
The concept of an SAS first arose in design theory \cite{BM, BN, BS, RC} and group theory \cite{Wi}.
A systematic study began with \cite{Del, Hig}.
A comprehensive treatment is given in \cite{BI, BCN}.
The combinatorial regularity of an SAS gives it a rich algebraic structure.
A (symmetric) character algebra \cite{Kawada, BI} 
and an idempotent system \cite{NT:ips} can be used to study the algebraic structure of an SAS,
without having to assume the combinatorial structure.
A character algebra  is an abstraction of the Bose-Mesner algebra \cite{BN} of 
an SAS.
For a vertex $x$ of an SAS
the corresponding subconstituent algebra is generated by the
Bose-Mesner  algebra and the dual Bose-Mesner algebra with respect to $x$ \cite{T:subconst1}.
A symmetric idempotent system is an abstraction of the primary module of a subconstituent
algebra of an SAS \cite[Section 1]{NT:ips}.
Our purpose in the present paper is two fold:
(i) we classify up to isomorphism  the idempotent systems and symmetric
idempotent systems;
(ii) we describe how symmetric idempotent systems are related to
character algebras.

We recall the definition of an idempotent system \cite{NT:ips}.
Fix a field $\F$ and an integer $d \geq 0$.
Let $V$ denote a vector space over $\F$ with dimension $d+1$.
Let $\text{\rm End}(V)$ denote the $\F$-algebra consisting of the
$\F$-linear maps $V \to V$.
Let $\A$ denote an $\F$-algebra isomorphic to $\text{\rm End}(V)$.
An idempotent system in $\A$ is a sequence
$\Phi = ( \{E_i\}_{i=0}^d; \{E^*_i\}_{i=0}^d)$ such that
\begin{itemize}
\item[\rm (i)]
$\{E_i\}_{i=0}^d$ is a system of mutually orthogonal rank $1$ idempotents in $\A$;
\item[\rm (ii)]
$\{E^*_i\}_{i=0}^d$ is a system of mutually orthogonal rank $1$ idempotents in $\A$;
\item[\rm (iii)]
$E_0 E^*_i E_0 \neq 0 \quad (0 \leq i \leq d)$;
\item[\rm (iv)]
$E^*_0 E_i E^*_0 \neq 0 \quad (0 \leq i \leq d)$.
\end{itemize}
We say that $\Phi$ is over $\F$ and has diameter $d$. 
The idempotent system $\Phi$ is said to be symmetric whenever there exists 
an antiautomorphism of $\A$ that fixes each of $E_i$, $E^*_i$ for $0 \leq i \leq d$.

We recall the concept of  a character algebra \cite{BI, Kawada}.
Throughout this paper a character algebra is understood to be symmetric.
A character algebra over $\F$ is a sequence $({\mathcal C}; \{x_i\}_{i=0}^d)$
such that $\mathcal C$ is a commutative $\F$-algebra
and $\{x_i\}_{i=0}^d$ is a distinguished basis for $\mathcal C$
that satisfies the conditions in Definition \ref{def:SCalgebra} below.
We are mainly interested in the semisimple case, in which
$\mathcal C$ has a basis $\{e_i\}_{i=0}^d$ of primitive idempotents.
Among $\{e_i\}_{i=0}^d$ there exists a distinguished one, said to be trivial.
A character system over $\F$ is a sequence
$\Psi = ({\mathcal C} ; \{x_i\}_{i=0}^d; \{e_i\}_{i=0}^d)$,
where $({\mathcal C}; \{x_i\}_{i=0}^d)$ is a semisimple character algebra over $\F$,
and $\{e_i\}_{i=0}^d$ is an ordering of  the primitive idempotents of $\mathcal C$ with $e_0$ trivial.
We say that $\Psi$ has diameter $d$.

The following notation is convenient.
Let $\Matd$ denote the algebra consisting 
of the $d+1$ by $d+1$ matrices that have all entries in $\F$.
We index rows and columns by $0,1, \ldots, d$.
For $0 \leq i,j \leq d$ let $\Delta_{i,j}$ denote the matrix in $\Matd$
that has $(i,j)$-entry $1$ and all other entries $0$.

Our classification of idempotent systems is summarized as follows.
An invertible  matrix $R \in \Matd$ is said to be solid whenever 
for both $R$ and $R^{-1}$ all entries are nonzero in both column $0$ and row $0$.
Two matrices $R$, $S$ in $\Matd$ are said to be diagonally equivalent
whenever there exist invertible diagonal matrices $H$, $K$ in $\Matd$ such that
$S = H R K$.
For a solid invertible matrix $R \in \Matd$, we show that the sequence
\[
   \Phi_R = ( \{\Delta_{i,i} \}_{i=0}^d;  \{ R \Delta_{i,i} R^{-1} \}_{i=0}^d)
\]
is an idempotent system in $\Matd$.
We show that the map $R \mapsto \Phi_R$ induces
a bijection between the following two sets:
\begin{itemize}
\item[\rm (i)]
the diagonal equivalence classes of solid invertible matrices in $\Matd$;
\item[\rm (ii)]
the isomorphism classes of idempotent systems over $\F$ with diameter $d$.
\end{itemize}

Our classification of symmetric idempotent systems is summarized as follows.
An invertible $R \in \Matd$ is said to be almost orthogonal (AO) whenever
$R^{\sf t}$ is diagonally equivalent to $R^{-1}$.
By restricting the above bijection to AO solid invertible matrices,
we get a bijection between the following two sets:
\begin{itemize}
\item[\rm (i)]
the diagonal equivalence classes of AO solid invertible matrices in $\Matd$;
\item[\rm (ii)]
the isomorphism classes of symmetric idempotent systems over $\F$ with diameter $d$.
\end{itemize}

The above classifications have the following alternate version.
A solid invertible matrix $R \in \Matd$ is said to be normalized whenever
in column $0$ of $R$ all entries are equal to $1$ and in column $0$ of $R^{-1}$
all entries are the same.
As we will show, each diagonal equivalence class of solid invertible matrices contains 
a unique normalized element.
Thus our first bijection above induces a bijection between the following  two sets:
\begin{itemize}
\item[\rm (i)]
the normalized solid invertible matrices in $\Matd$;
\item[\rm (ii)]
the isomorphism classes of idempotent systems over $\F$ with diameter $d$.
\end{itemize}
Similarly we get  a bijection between the following two sets:
\begin{itemize}
\item[\rm (i)]
the AO normalized solid invertible matrices in $\Matd$;
\item[\rm (ii)]
the isomorphism classes of symmetric idempotent systems over $\F$ with diameter $d$.
\end{itemize}
Shortly we will describe this bijection in more detail.
Next we describe how AO normalized solid invertible matrices and symmetric idempotent
systems are related to character systems.
Consider the following sets:
\begin{center}
\begin{tabular}{rl}
$\text{\rm AON}_d(\F)$: &   \rule{0mm}{1ex}
the AO normalized solid invertible matrices in $\Matd$;
\\
$\text{\rm SIS}_d (\F)$: &    \rule{0mm}{3ex}
\parbox[t]{4.5in}{the isomorphism classes of symmetric idempotent systems over $\F$
with diameter $d$;}
\\
$\text{\rm CS}_d (\F)$: &  \rule{0mm}{3ex}
the isomorphism classes of character systems over $\F$ with diameter $d$.
\end{tabular}
\end{center}
We will show that these three sets are mutually in bijection,
and we will describe  the bijections involved.

We already gave a bijection from $\text{\rm AON}_d (\F)$ to $\text{\rm SIS}_d (\F)$.
We now describe the inverse bijection.
Let $\Phi = (\{E_i\}_{i=0}^d$; $\{E^*_i\}_{i=0}^d)$ denote a symmetric idempotent system in $\A$.
The elements $\{E_i\}_{i=0}^d$ form a basis for a commutative subalgebra $\mathcal M$ of $\A$.
For $0 \leq i \leq d$ there exists a unique $A_i \in {\mathcal M}$ such that
$A_i E^*_0 E_0 = E^*_i E_0$ for $0 \leq i \leq d$.
The elements $\{A_i\}_{i=0}^d$ form a basis of $\mathcal M$.
Let $P_\Phi$ denote the transition matrix
from the basis $\{E_i\}_{i=0}^d$ of $\mathcal M$ to the basis $\{A_i\}_{i=0}^d$ of $\mathcal M$.
The matrix $P_\Phi$ is AO normalized solid invertible,
and called the first eigenmatrix of $\Phi$.
For an idempotent system $\Phi$, let $[\Phi]$ denote the isomorphism class
of idempotent systems over $\F$ that contains $\Phi$.
We show that the following maps are inverses:
\[
\xymatrix@C=50pt{
 \text{\rm AON}_d(\F) 
 \ar[r]_{ P \; \mapsto \;[\Phi_P] }
&
  \text{\rm SIS}_d (\F),
}
\qquad\qquad
\xymatrix@C=50pt{
  \text{\rm SIS}_d (\F)
 \ar[r]_{ [\Phi] \; \mapsto \; P_\Phi}
&
  \text{\rm AON}_d (\F).
}
\]

Next we describe the bijective correspondence between $\text{\rm AON}_d (\F)$ 
and $\text{\rm CS}_d (\F)$.
Let $\Psi = ({\mathcal C};$ $\{x_i\}_{i=0}^d; \{e_i\}_{i=0}^d)$ denote a character system
over $\F$.
Let $P_\Psi$ denote the transition matrix from the basis $\{e_i\}_{i=0}^d$
of $\mathcal C$ to the basis $\{x_i\}_{i=0}^d$ of $\mathcal C$.
The matrix $P_\Psi$ is AO normalized solid invertible, and called the
eigenmatrix of $\Psi$.
For an AO normalized solid invertible $P \in \Matd$,
we construct a character system $\Psi_P$ as follows.
Let $\{e_i\}_{i=0}^d$ denote indeterminates, and let $\mathcal C$
denote the vector space over $\F$ with basis $\{e_i\}_{i=0}^d$.
Turn $\mathcal C$ into an algebra such that
$e_i e_j = \delta_{i,j} e_i$ for $0 \leq i \leq d$.
View $P$ as the transition matrix from $\{e_i\}_{i=0}^d$ to a basis
$\{x_i\}_{i=0}^d$.
We show that the sequence 
\[
  \Psi_P = ( {\mathcal C}; \{x_i\}_{i=0}^d; \{e_i\}_{i=0}^d)
\]
is a character system over $\F$.
We show that the following maps are inverses:
\[
\quad
\xymatrix@C=50pt{
 \text{\rm AON}_d (\F)
 \ar[r]_{  P \; \mapsto \; [\Psi_P] }
&
 \text{\rm CS}_d (\F),
}
\qquad\qquad
\xymatrix@C=50pt{
 \text{\rm CS}_d (\F)
 \ar[r]_{ [\Psi] \; \mapsto \; P_\Psi }
&
 \text{\rm AON}_d (\F).
}
\]

Next we describe the bijective correspondence between
$\text{\rm SIS}_d (\F)$ and $\text{\rm CS}_d (\F)$.
For a symmetric idempotent system $\Phi = (\{E_i\}_{i=0}^d; \{E^*_i\}_{i=0}^d)$ over $\F$,
we show that the sequence
\[
   \Psi_\Phi = ({\mathcal M}; \{A_i\}_{i=0}^d; \{E_i\}_{i=0}^d)
\]
is a character system over $\F$.
For a character system $\Psi = ( {\mathcal C}; \{x_i\}_{i=0}^d; \{e_i\}_{i=0}^d)$ over $\F$,
we construct a symmetric idempotent system $\Phi_\Psi$ as follows.
For $0 \leq i \leq d$ define $E_i$, $E^*_i \in \text{\rm End}({\mathcal C})$ such that
$E_i e_j = \delta_{i,j} e_j$ and $E^*_i x_j = \delta_{i,j} x_j$ for $0 \leq j \leq d$.
We show that the sequence
\[
  \Phi_\Psi = (\{E_i\}_{i=0}^d; \{E^*_i\}_{i=0}^d)
\]
is a symmetric idempotent system in $\text{\rm End}({\mathcal C})$.
We show that the following maps are inverses:
\[
\xymatrix@C=50pt{
 \text{\rm SIS}_d (\F)
 \ar[r]_{[\Phi]  \; \mapsto \: [\Psi_\Phi] }
&
 \text{\rm CS}_d (\F),
}
\qquad\qquad
\xymatrix@C=50pt{
 \text{\rm CS}_d (\F)
 \ar[r]_{ [\Psi] \; \mapsto \; [\Phi_\Psi]}
&
 \text{\rm SIS}_d (\F).
}
\]

For a symmetric idempotent system $\Phi = ( \{E_i\}_{i=0}^d; \{E^*_i\}_{i=0}^d)$,
the sequence 
\[
   \Phi^* = (\{E^*_i\}_{i=0}^d; \{E_i\}_{i=0}^d)
\]
is an idempotent system, called the dual of $\Phi$.
We have a bijective involution $\text{\rm SIS}_d (\F) \to \text{\rm SIS}_d (\F)$
that sends $[\Phi] \mapsto [\Phi^*]$, called the duality map.
From the above bijections $\text{\rm SIS}_d(\F) \to \text{\rm AON}_d (\F)$
and $\text{\rm SIS}_d(\F) \to \text{\rm CS}_d(\F)$,
the sets $\text{\rm AON}_d (\F)$ and $\text{\rm CS}_d (\F)$ inherit a duality map.
We describe these duality maps in detail.
We find that the duality map on $\text{\rm CS}_d (\F)$ is essentially the same thing as
the duality map for character algebras defined by Kawada \cite{Kawada}.

The paper is organized as follows.
In Section \ref{sec:pre} we fix some notation and recall some basic concepts.
In Section \ref{sec:ips} we recall the notion of an idempotent system.
In Section \ref{sec:solid} we classify the idempotent systems in terms of solid
invertible matrices.
In Section \ref{sec:symmetric} we classify the symmetric idempotent systems
in terms of AO solid invertible matrices.
In Section \ref{sec:normalized} we consider the normalization of a solid invertible
matrix, and we establish the bijection $\text{\rm AON}_d (\F) \to \text{\rm SIS}_d(\F)$.
In Section \ref{sec:known} we describe the inverse of this bijection.
In Sections \ref{sec:SCalgebra}--\ref{sec:eigen} we discuss character algebras
and character systems.
In Section \ref{sec:SISCS} we show that
$\text{\rm AON}_d (\F)$, $\text{\rm SIS}_d (\F)$, $\text{\rm CS}_d (\F)$
are mutually in bijection,
and we describe the bijections involved.
Section \ref{sec:dual} is about the duality maps for
$\text{\rm AON}_d (\F)$, $\text{\rm SIS}_d (\F)$, $\text{\rm CS}_d (\F)$.

\section{Preliminaries}
\label{sec:pre}

We now begin our formal argument.
In this section we fix some notation and recall some basic concepts.
Throughout this paper $\F$ denotes a field.
All vector spaces discussed in this paper are over $\F$.
All algebras discussed in this paper are associative,
over $\F$, and have a multiplicative identity.
For an algebra $\mathcal A$, 
by an {\em automorphism} of $\mathcal A$ we mean an algebra isomorphism ${\mathcal A} \to {\mathcal A}$,
and by an {\em antiautomorphism} of $\mathcal A$ we mean an $\F$-linear bijection
$\tau : {\mathcal A} \to {\mathcal A}$ such that $(Y Z)^\tau = Z^\tau Y^\tau$ for $Y, Z \in {\mathcal A}$.
For the rest of this paper, fix an integer $d \geq 0$.
Let $\Matd$ denote the $\F$-algebra consisting 
of the $d+1$ by $d+1$ matrices that have all entries in $\F$.
We index rows and columns by $0,1, \ldots, d$.
By the Skolem-Noether theorem \cite[Corollary 7.125]{Rot},
a map $\sigma :  \Matd \to \Matd$ is an automorphism of $\Matd$ if and only if
there exists an invertible $S \in \Matd$ such that $A^\sigma = S A S^{-1}$
for $A \in \Matd$.
A map $\tau : \Matd \to \Matd$ is an antiautomorphism of 
$\Matd$ if and only if there exists an invertible $T \in \Matd$ such that
$A^\tau = T A^{\sf t} T^{-1}$ for $A \in \Matd$, 
where $A^{\sf t}$ denotes the transpose of $A$.
For $0 \leq i,j \leq d$ let $\Delta_{i,j}$ denote the matrix in $\Matd$
that has $(i,j)$-entry $1$ and all other entries is $0$.
The matrices $\Delta_{i,j}$ $(0 \leq i,j \leq d)$ form a basis for the vector space $\Matd$.
Let $\mathcal D$ denote the subalgebra of $\Matd$ consisting of diagonal matrices in $\Matd$.
The algebra $\mathcal D$ is commutative,
and the matrices $\Delta_{i,i}$ $(0 \leq i \leq d)$ form a basis for the vector space $\mathcal D$.
Note that the identity matrix $I = \sum_{i=0}^d \Delta_{i,i}$.

\begin{lemma}    \label{lem:diag}    \samepage
\ifDRAFT {\rm lem:diag}. \fi
A matrix in $\Matd$ is diagonal if and only if it commutes with every diagonal matrix 
in $\Matd$.
\end{lemma}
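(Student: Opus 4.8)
The plan is to prove both implications directly using the basis $\{\Delta_{i,j}\}$ of $\Matd$ together with the familiar multiplication rule $\Delta_{i,j}\Delta_{k,\ell} = \delta_{j,k}\Delta_{i,\ell}$.

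For the ``only if'' direction, suppose $A \in \Matd$ is diagonal, say $A = \sum_{i=0}^d a_i \Delta_{i,i}$. Let $B = \sum_{i=0}^d b_i \Delta_{i,i}$ be an arbitrary diagonal matrix. A short computation with the multiplication rule gives $AB = \sum_{i=0}^d a_i b_i \Delta_{i,i} = BA$, so $A$ commutes with every diagonal matrix.

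For the ``if'' direction, suppose $A \in \Matd$ commutes with every diagonal matrix; write $A = \sum_{i,j} a_{i,j}\Delta_{i,j}$ with $a_{i,j} \in \F$. It suffices to show $a_{i,j} = 0$ whenever $i \neq j$. The key step is to test the commuting condition against the specific diagonal matrices $\Delta_{k,k}$. Using $\Delta_{k,k}\Delta_{i,j} = \delta_{k,i}\Delta_{i,j}$ and $\Delta_{i,j}\Delta_{k,k} = \delta_{j,k}\Delta_{i,j}$, we get $\Delta_{k,k}A = \sum_{j} a_{k,j}\Delta_{k,j}$ and $A\Delta_{k,k} = \sum_{i} a_{i,k}\Delta_{i,k}$. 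Setting these equal and comparing coefficients in the basis $\{\Delta_{i,j}\}$, we find $a_{k,j} = 0$ for all $j \neq k$. Letting $k$ range over $0,1,\dots,d$ shows all off-diagonal entries of $A$ vanish, so $A$ is diagonal.

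I do not anticipate any real obstacle here; the only mild subtlety is bookkeeping with the Kronecker deltas, and one must be a little careful to note that a single test matrix $\Delta_{k,k}$ already kills row $k$ and column $k$ off the diagonal, so ranging over all $k$ suffices. (One could alternatively test against the identity-minus-$\Delta_{k,k}$ or against a diagonal matrix with distinct entries when $\F$ is large enough, but using the $\Delta_{k,k}$ individually avoids any hypothesis on $|\F|$.)
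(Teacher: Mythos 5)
Your proof is correct, and it uses the standard argument one would expect: testing commutation against the matrices $\Delta_{k,k}$ to kill the off-diagonal entries. The paper states Lemma \ref{lem:diag} without proof, treating it as a routine fact, so there is nothing to compare against; your write-up supplies exactly the verification the authors omit, and correctly avoids any hypothesis on the size of $\F$.
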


Let $V$ denote a vector space with dimension $d+1$.
Let $\text{\rm End}(V)$ denote the algebra consisting of the $\F$-linear maps
$V \to V$.
We recall how each basis $\{v_i\}_{i=0}^d$ of $V$ gives an algebra isomorphism
$\text{\rm End}(V) \to \Matd$.
For $X \in \text{\rm End}(V)$ and $M \in \Matd$,
we say that {\em $M$ represents $X$ with respect to $\{v_i\}_{i=0}^d$}
whenever $X v_j = \sum_{i=0}^d M_{i,j} v_i$ for $0 \leq j \leq d$.
The isomorphism sends $X$ to the unique matrix in $\Matd$ that represents
$X$ with respect to $\{ v_i \}_{i=0}^d$.
For two bases $\{u_i\}_{i=0}^d$, $\{v_i\}_{i=0}^d$ of $V$,
by the {\em transition matrix from $\{u_i\}_{i=0}^d$ to $\{v_i\}_{i=0}^d$} we mean
the matrix $P \in \Matd$ such that
$v_j = \sum_{i=0}^d P_{i,j} u_i$ for $0 \leq j \leq d$.
In this case, $P$ is invertible, and $P^{-1}$ is the transition matrix
from $\{v_i\}_{i=0}^d$ to $\{u_i\}_{i=0}^d$.
We recall some basic facts concerning bilinear forms.
By a {\em bilinear form on $V$} we mean a map $\b{\; , \; } : V \times V \to \F$
that satisfies the following four conditions for $u,v,w \in V$ and $\alpha \in \F$:
(i) $\b{u+v,w} = \b{u,w} + \b{v,w}$;
(ii) $\b{\alpha u, v} = \alpha \b{u,v}$;
(iii) $\b{u,v+w} = \b{u,v} + \b{u,w}$;
(iv) $\b{u,\alpha v} = \alpha \b{u,v}$.
A bilinear form $\b{\; , \;}$ on $V$ is said to be {\em symmetric} whenever
$\b{u,v} = \b{v,u}$ for $u,v\in V$.
Let $\b{ \; , \; }$ denote a bilinear form on $V$.
The following are equivalent:
(i) there exists a nonzero $u \in V$ such that
$\b{ u, v} = 0$ for all $v \in V$;
(ii) there exists a nonzero $v \in V$ such that
$\b{ u,v } = 0$ for all $u \in V$.
The form $\b{ \; , \; }$ is said to be {\em degenerate}
whenever (i), (ii) hold and {\em nondegenerate} otherwise.
Assume that $\b{ \; , \;}$ is nondegenerate.
A basis $\{u_i\}_{i=0}^d$ of $V$ is said to be {\em orthogonal with respect to $\b{ \; , \;}$}
whenever $\b{u_i, u_j} = 0$ if $i \neq j$ $(0 \leq i,j \leq d)$.
In this case $\b{u_i, u_i} \neq 0$ for $0 \leq i \leq d$.

For the rest of this paper, let $\A$ denote an algebra that is isomorphic to $\Matd$.
The identity element of $\A$ is denoted by $I$.

\begin{defi}   \label{def:orth}    \samepage
\ifDRAFT {\rm def:orth}. \fi
By a {\em system of mutually orthogonal rank $1$ idempotents} in $\A$
we mean a sequence $\{E_i\}_{i=0}^d$ of elements in $\A$ such that
\[
   E_i E_j = \delta_{i,j} E_i  \qquad\qquad   (0 \leq i,j \leq d),
\]
\[
  \text{\rm rank} (E_i) = 1 \qquad\qquad (0 \leq i \leq d).
\]
\end{defi}

\begin{example}    \label{example}    \samepage
\ifDRAFT {\rm example}. \fi
The matrices $\{\Delta_{i,i} \}_{i=0}^d$ form a system of mutually orthogonal
rank $1$ idempotents in $\Matd$.
\end{example}

We now make a more general statement.

\begin{lemma}    \label{lem:R}     \samepage
\ifDRAFT {\rm lem:R}. \fi
For a sequence $\{E_i\}_{i=0}^d$ of elements in $\Matd$
the following are equivalent:
\begin{itemize}
\item[\rm (i)]
$\{E_i\}_{i=0}^d$ is a system of mutually orthogonal rank $1$ idempotents;
\item[\rm (ii)]
there exists an invertible $R \in \Matd$ such that
$E_i = R \Delta_{i,i} R^{-1}$ for $0 \leq i \leq d$.
\end{itemize}
\end{lemma}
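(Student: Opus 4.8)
The plan is to establish the two implications separately. Throughout, view $\Matd$ as acting on the column space $V=\F^{d+1}$ by left multiplication, let $\{\varepsilon_i\}_{i=0}^d$ be the standard basis of $V$, and recall that $\Delta_{i,i}$ is then the projection of $V$ onto $\F\varepsilon_i$ along the span of $\{\varepsilon_j : j\neq i\}$. The implication (ii)$\,\Rightarrow\,$(i) is routine: if $E_i=R\Delta_{i,i}R^{-1}$ for an invertible $R\in\Matd$, then $E_iE_j=R\Delta_{i,i}R^{-1}R\Delta_{j,j}R^{-1}=R(\Delta_{i,i}\Delta_{j,j})R^{-1}=\delta_{i,j}E_i$, and conjugation by the invertible matrix $R$ preserves rank, so $\mathrm{rank}(E_i)=\mathrm{rank}(\Delta_{i,i})=1$.

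The content is in (i)$\,\Rightarrow\,$(ii). Assume $\{E_i\}_{i=0}^d$ is a system of mutually orthogonal rank $1$ idempotents, and set $V_i=E_iV$, the image of $E_i$ acting on $V$; since $\mathrm{rank}(E_i)=1$, each $V_i$ is one-dimensional. First I would show the sum $V_0+\cdots+V_d$ is direct. Suppose $v_0+\cdots+v_d=0$ with $v_i\in V_i$, and write $v_i=E_iw_i$ with $w_i\in V$. From $E_iE_j=\delta_{i,j}E_i$ we get $E_jv_i=E_jE_iw_i=\delta_{i,j}v_i$ for all $i,j$; applying $E_j$ to the relation $v_0+\cdots+v_d=0$ then yields $v_j=0$ for each $j$. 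Hence $\dim(V_0\oplus\cdots\oplus V_d)=d+1=\dim V$, so $V=V_0\oplus\cdots\oplus V_d$. Choosing a nonzero $v_i\in V_i$ for each $i$ produces a basis $\{v_i\}_{i=0}^d$ of $V$, and the computation above (together with $E_iv_i=E_i^2w_i=v_i$) gives $E_iv_j=\delta_{i,j}v_j$ for $0\leq i,j\leq d$.

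To finish, let $R\in\Matd$ be the transition matrix from $\{\varepsilon_i\}_{i=0}^d$ to $\{v_i\}_{i=0}^d$, i.e.\ the matrix whose $i$-th column is $v_i$; then $R$ is invertible with $R\varepsilon_i=v_i$ and $R^{-1}v_i=\varepsilon_i$. For all $i,j$ we have $(R\Delta_{i,i}R^{-1})v_j=R\Delta_{i,i}\varepsilon_j=\delta_{i,j}R\varepsilon_j=\delta_{i,j}v_j=E_iv_j$, and since $\{v_j\}_{j=0}^d$ is a basis of $V$ this forces $E_i=R\Delta_{i,i}R^{-1}$ for $0\leq i\leq d$.

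The one step needing genuine care is the directness of $\sum_i V_i$, and this is exactly where the orthogonality relations $E_iE_j=\delta_{i,j}E_i$ enter; everything after the decomposition $V=\bigoplus_i V_i$ and the adapted basis $\{v_i\}_{i=0}^d$ are in hand is formal. Alternatively, one could build $R$ as the matrix realizing the algebra isomorphism $\text{\rm End}(V)\to\Matd$ attached to the basis $\{v_i\}_{i=0}^d$ recalled in Section~\ref{sec:pre}, but the direct verification above is equally short.
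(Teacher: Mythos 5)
Your proof is correct. The paper states Lemma \ref{lem:R} without proof, treating it as a standard fact, and your argument --- establishing the direct sum decomposition $V=\bigoplus_i E_iV$ from the orthogonality relations and then taking $R$ to have the adapted basis vectors as columns --- is exactly the routine simultaneous-diagonalization argument the authors are implicitly relying on.
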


The above result can be stated more abstractly as follows.

\begin{lemma}    \label{lem:E}    \samepage
\ifDRAFT {\rm lem:E}. \fi
For a sequence $\{E_i\}_{i=0}^d$ of elements in $\A$ the following are equivalent:
\begin{itemize}
\item[\rm (i)]
$\{E_i\}_{i=0}^d$ is a system of mutually orthogonal rank $1$ idempotents;
\item[\rm (ii)]
there exists an algebra isomorphism $\A \to \Matd$ that sends
$E_i \mapsto \Delta_{i,i}$ for $0 \leq i \leq d$.
\end{itemize}
\end{lemma}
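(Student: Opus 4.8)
The plan is to transport the situation to $\Matd$ along a fixed isomorphism and then invoke Lemma \ref{lem:R}, which is the concrete version of the same statement. The whole argument is then just composition of isomorphisms.

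First I would settle the direction (ii) $\Rightarrow$ (i). Suppose $\gamma : \A \to \Matd$ is an algebra isomorphism with $\gamma(E_i) = \Delta_{i,i}$ for $0 \leq i \leq d$. Since $\gamma$ is multiplicative and sends $0$ to $0$, the relations $E_i E_j = \delta_{i,j} E_i$ follow immediately from Example \ref{example}. It remains to see that each $E_i$ has rank $1$; for this I would use that ``$E$ is a rank $1$ idempotent'' is an algebra-theoretic property: a nonzero idempotent $E$ in a copy of $\Matd$ has rank $1$ precisely when $\dim(E \A E) = 1$, equivalently when $E$ is primitive. An algebra isomorphism preserves this property, so $\{E_i\}_{i=0}^d$ is a system of mutually orthogonal rank $1$ idempotents in $\A$.

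For (i) $\Rightarrow$ (ii), fix any algebra isomorphism $\gamma : \A \to \Matd$; one exists by our standing hypothesis on $\A$. By the remark just made, $\gamma$ carries $\{E_i\}_{i=0}^d$ to a system $\{\gamma(E_i)\}_{i=0}^d$ of mutually orthogonal rank $1$ idempotents in $\Matd$. By Lemma \ref{lem:R} there is an invertible $R \in \Matd$ with $\gamma(E_i) = R \Delta_{i,i} R^{-1}$ for $0 \leq i \leq d$. Let $\sigma : \Matd \to \Matd$ be the inner automorphism $A \mapsto R^{-1} A R$. Then $\sigma \circ \gamma : \A \to \Matd$ is an algebra isomorphism, and $(\sigma \circ \gamma)(E_i) = R^{-1}(R \Delta_{i,i} R^{-1}) R = \Delta_{i,i}$, which is exactly (ii).

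I expect no real obstacle here once Lemma \ref{lem:R} is in hand. The only point that deserves explicit mention is the invariance of the notion ``rank $1$ idempotent'' under algebra isomorphisms; this I would justify via the intrinsic characterization above (dimension of $E \A E$, or primitivity), or alternatively by noting through the Skolem--Noether theorem that every automorphism of $\Matd$ is conjugation and hence rank-preserving.
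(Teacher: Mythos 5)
Your proof is correct, and it follows exactly the route the paper intends: the paper states Lemma \ref{lem:E} without proof, immediately after Lemma \ref{lem:R}, as the ``more abstract'' restatement obtained by transporting along a fixed isomorphism $\A \to \Matd$ and adjusting by an inner automorphism. Your explicit attention to why ``rank $1$ idempotent'' is preserved under algebra isomorphisms (via $\dim(E\A E)=1$, or via Skolem--Noether) is a worthwhile addition, since that is the one point the paper's silent treatment glosses over.
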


\begin{lemma}    \label{lem:Ei}    \samepage
\ifDRAFT {\rm lem:Ei}. \fi
Let $\{E_i\}_{i=0}^d$ denote a system of mutually orthogonal rank $1$
idempotents in $\A$.
Then $\{E_i\}_{i=0}^d$ form a basis for a commutative subalgebra of $\A$.
Moreover $I = \sum_{i=0}^d E_i$.
\end{lemma}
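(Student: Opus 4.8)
The plan is to reduce everything to the concrete situation in $\Matd$ via Lemma \ref{lem:E}. By that lemma there is an algebra isomorphism $\rho \colon \A \to \Matd$ with $\rho(E_i) = \Delta_{i,i}$ for $0 \leq i \leq d$. Since $\rho$ is an $\F$-linear bijection and the matrices $\{\Delta_{i,i}\}_{i=0}^d$ are linearly independent in $\Matd$, the elements $\{E_i\}_{i=0}^d$ are linearly independent in $\A$.

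Next I would identify the span. Let $\mathcal M$ denote the $\F$-subspace of $\A$ spanned by $\{E_i\}_{i=0}^d$. Then $\rho(\mathcal M) = \mathcal D$, the algebra of diagonal matrices in $\Matd$, which is a commutative subalgebra of $\Matd$ and contains the identity $I = \sum_{i=0}^d \Delta_{i,i}$. Because $\rho$ is an algebra isomorphism, $\mathcal M = \rho^{-1}(\mathcal D)$ is a commutative subalgebra of $\A$, and $\{E_i\}_{i=0}^d$ is a basis for it. Alternatively, one can argue directly without invoking $\rho$: the relations $E_i E_j = \delta_{i,j} E_i$ show that $\mathcal M$ is closed under multiplication and that $E_i E_j = E_j E_i$ for all $i,j$, so $\mathcal M$ is commutative; and linear independence follows by multiplying a dependence $\sum_i c_i E_i = 0$ on the right by $E_j$ to get $c_j E_j = 0$, then using $E_j \neq 0$ (since $\mathrm{rank}(E_j)=1$).

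For the moreover part, apply $\rho$ to $\sum_{i=0}^d E_i$ to obtain $\sum_{i=0}^d \Delta_{i,i} = I$. Since an algebra isomorphism sends the identity of $\A$ to the identity $I$ of $\Matd$, and $\rho$ is injective, we conclude $\sum_{i=0}^d E_i = I$. Equivalently, $E := \sum_i E_i$ satisfies $E^2 = E$ by orthogonality, while $\rho(E) = I$ is invertible, so $E$ is an invertible idempotent and hence $E = I$. This also confirms that $\mathcal M$ contains $I$, as required of a subalgebra in the convention of this paper.

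I do not anticipate a genuine obstacle here. The one place deserving a moment's care is the moreover part, where one must use that the isomorphism of Lemma \ref{lem:E}, like any algebra isomorphism, preserves the multiplicative identity; this is what pins down $\sum_i E_i$ as $I$ rather than merely as some idempotent of $\A$.
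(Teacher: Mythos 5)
Your proof is correct; note that the paper states Lemma \ref{lem:Ei} without proof, treating it as a standard fact about orthogonal idempotents. Both of your routes work: the reduction via Lemma \ref{lem:E} to the diagonal matrices $\{\Delta_{i,i}\}_{i=0}^d$, and the direct argument (linear independence by right-multiplying a dependence relation by $E_j$ and using $E_j \neq 0$, commutativity and closure from $E_i E_j = \delta_{i,j}E_i$, and $\sum_i E_i = I$ because an algebra isomorphism preserves the identity). Your closing remark is the right one to flag, since under the paper's conventions a subalgebra must contain the multiplicative identity.
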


In this paper we will occasionally speak of three sets being
mutually in bijection.
This means that for any ordering $A, B, C$ of the three sets
and $a \in A$, $b \in B$, $c \in C$,
if $a$, $b$ correspond and $b$, $c$ correspond then
$a$, $c$ correspond.

\section{Idempotent systems}
\label{sec:ips}

In this section we recall from \cite{NT:ips} the notion of an idempotent system,
and make some general remarks about it.
Recall the algebra $\A$ that is isomorphic to $\Matd$.

\begin{defi}    {\rm (See \cite[Definition 3.1]{NT:ips}.) }
\label{def:ips}    \samepage
\ifDRAFT {\rm def:ips}. \fi
By an {\em idempotent system in $\A$}  we mean a sequence
\[
  \Phi =  (\{E_i\}_{i=0}^d; \{E^*_i\}_{i=0}^d)
\]
such that
\begin{itemize}
\item[\rm (i)]
$\{E_i\}_{i=0}^d$ is a system of mutually orthogonal rank $1$ idempotents in $\A$;
\item[\rm (ii)]
$\{E^*_i\}_{i=0}^d$ is a system of mutually orthogonal rank $1$ idempotents in $\A$;
\item[\rm (iii)]
$E_0 E^*_i E_0 \neq 0 \quad (0 \leq i \leq d)$;
\item[\rm (iv)]
$E^*_0 E_i E^*_0 \neq 0 \quad (0 \leq i \leq d)$.
\end{itemize}
The idempotent system $\Phi$ is said to be {\em over $\F$}.
We call $d$ the {\em diameter} of $\Phi$.
\end{defi}

Let $\Phi = (\{E_i\}_{i=0}^d; \{E^*_i\}_{i=0}^d)$ denote an idempotent system in $\A$.
Then the sequence
\[
   \Phi^* = (\{E^*_i\}_{i=0}^d; \{E_i\}_{i=0}^d)
\]
is an idempotent system in $\A$, called the {\em dual of $\Phi$}.
For an object $\phi$ attached to $\Phi$, the corresponding object attached to $\Phi^*$
is denoted by $\phi^*$.

For an algebra $\A'$ and an algebra isomorphism $\sigma : \A \to \A'$,
define the sequence
\[
   \Phi^\sigma = (\{E_i^\sigma\}_{i=0}^d; \{(E^*_i)^\sigma \}_{i=0}^d).
\]
Then $\Phi^\sigma$ is an idempotent system in $\A'$.
Let $\Phi' = (\{E'_i\}_{i=0}^d; \{E^{* \prime}\}_{i=0}^d)$
denote an idempotent system in $\A'$.
By an {\em isomorphism of idempotent systems from $\Phi$ to $\Phi'$} we mean
an algebra isomorphism $\sigma : \A \to \A'$ such that $\Phi^\sigma = \Phi'$.
The idempotent systems $\Phi$ and $\Phi'$ are said to be {\em isomorphic} whenever there exists an
isomorphism of idempotent systems from $\Phi$ to $\Phi'$.

\begin{defi}    \label{def:M}    \samepage
\ifDRAFT {\rm def:M}. \fi
Let $\Phi = (\{E_i\}_{i=0}^d; \{E^*_i\}_{i=0}^d)$ denote an idempotent system in $\A$.
Recall from Lemma \ref{lem:Ei} that $\{E_i\}_{i=0}^d$ form a basis for a commutative
subalgebra of $\A$;
we denote this subalgebra by $\mathcal M$.
\end{defi}

\section{A classification of the idempotent systems}
\label{sec:solid}

In this section we classify the idempotent systems up to isomorphism,
in terms of a type of invertible matrix said to be solid.
To motivate the classification,
we first construct an example of an idempotent system.
Given an invertible $R \in \Matd$,
consider the following matrices in $\Matd$:
\begin{align}
  E_i &= \Delta_{i,i},    &   E^*_i &= R \Delta_{i,i} R^{-1}           &&   (0 \leq i \leq d).   \label{eq:defEiEsi}
\end{align}
Note that each of $\{E_i\}_{i=0}^d$ and $\{E^*_i\}_{i=0}^d$ is a system of mutually
orthogonal rank $1$ idempotents in $\Matd$.
Our next goal is to find a necessary and sufficient condition on $R$
such that the conditions Definition \ref{def:ips}(iii), (iv) are satisfied.

\begin{lemma}    \label{lem:PhiRpre}    \samepage
\ifDRAFT {\rm lem:PhiRpre}. \fi
Referring to \eqref{eq:defEiEsi},  
the entries of $E_0 E^*_i E_0$ and $E^*_0 E_i E^*_0$ are described as follows.
For $0 \leq r,s \leq d$ their $(r,s)$-entry is
\begin{align}
 (E_0 E^*_i E_0)_{r,s} &=  \delta_{r,0} \delta_{s,0} R_{0,i} (R^{-1})_{i,0},    \label{eq:canaux1}
\\
 (E^*_0 E_i E^*_0)_{r,s} &= R_{r,0} (R^{-1})_{0,i} R_{i,0} (R^{-1})_{0,s}.      \label{eq:canaux2}
\end{align}
\end{lemma}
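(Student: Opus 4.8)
The plan is to reduce everything to the single elementary identity $\Delta_{a,b}\,\Delta_{c,d} = \delta_{b,c}\,\Delta_{a,d}$ together with the formula for the entries of a product $XYZ$ of matrices. The two displayed formulas are genuinely just a bookkeeping computation, so the ``proof'' amounts to organizing that computation cleanly; there is no real obstacle, only the risk of index errors.

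First I would record the auxiliary observation that for any $M \in \Matd$ one has $\Delta_{0,0} M \Delta_{0,0} = M_{0,0}\,\Delta_{0,0}$, since $(\Delta_{0,0}M\Delta_{0,0})_{r,s} = \delta_{r,0}\delta_{s,0}M_{0,0}$. Applying this with $M = E^*_i = R\Delta_{i,i}R^{-1}$ and using $(R\Delta_{i,i}R^{-1})_{0,0} = \sum_{k,l}R_{0,k}(\Delta_{i,i})_{k,l}(R^{-1})_{l,0} = R_{0,i}(R^{-1})_{i,0}$ gives
\[
 E_0 E^*_i E_0 = \Delta_{0,0}\bigl(R\Delta_{i,i}R^{-1}\bigr)\Delta_{0,0} = R_{0,i}(R^{-1})_{i,0}\,\Delta_{0,0},
\]
whose $(r,s)$-entry is $\delta_{r,0}\delta_{s,0}R_{0,i}(R^{-1})_{i,0}$, which is \eqref{eq:canaux1}.

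For the second formula I would first note that $E^*_0 = R\Delta_{0,0}R^{-1}$ has $(r,s)$-entry $(E^*_0)_{r,s} = \sum_{k,l}R_{r,k}(\Delta_{0,0})_{k,l}(R^{-1})_{l,s} = R_{r,0}(R^{-1})_{0,s}$. Then, since $E_i = \Delta_{i,i}$, expanding the triple product entrywise gives $(E^*_0 E_i E^*_0)_{r,s} = \sum_{k,l}(E^*_0)_{r,k}(\Delta_{i,i})_{k,l}(E^*_0)_{l,s} = (E^*_0)_{r,i}(E^*_0)_{i,s}$, and substituting the entry formula for $E^*_0$ twice yields $R_{r,0}(R^{-1})_{0,i}\,R_{i,0}(R^{-1})_{0,s}$, which is \eqref{eq:canaux2}. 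The only thing to be careful about is keeping the roles of $R$ and $R^{-1}$ and the fixed index $i$ straight; everything else is immediate.
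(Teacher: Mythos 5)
Your computation is correct and is exactly what the paper intends: its entire proof reads ``By matrix multiplication,'' and your argument simply carries out that multiplication explicitly (both index checks agree with \eqref{eq:canaux1} and \eqref{eq:canaux2}). Nothing further is needed.
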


\begin{proof}
By matrix multiplication.
\end{proof}

\begin{lemma}    \label{lem:PhiRpre2}    \samepage
\ifDRAFT {\rm lem:PhiRpre2}. \fi
Referring to \eqref{eq:defEiEsi},
for $0 \leq i \leq d$ the following are equivalent:
\begin{itemize}
\item[\rm (i)]
$E_0 E^*_i E_0 \neq 0$;
\item[\rm (ii)]
$R_{0,i} \neq 0$ and $(R^{-1})_{i,0} \neq 0$.
\end{itemize}
\end{lemma}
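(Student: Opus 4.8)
The plan is to read off the claim directly from Lemma~\ref{lem:PhiRpre}. We have the explicit formula \eqref{eq:canaux1}, which says that the only possibly-nonzero entry of $E_0 E^*_i E_0$ is the $(0,0)$-entry, and that this entry equals $R_{0,i} (R^{-1})_{i,0}$. So $E_0 E^*_i E_0 \neq 0$ is equivalent to $R_{0,i} (R^{-1})_{i,0} \neq 0$, which in turn (since $\F$ is a field, hence an integral domain) is equivalent to the conjunction $R_{0,i} \neq 0$ and $(R^{-1})_{i,0} \neq 0$.

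Concretely, the proof would go as follows. First invoke \eqref{eq:canaux1} to note that $E_0 E^*_i E_0$ is the matrix all of whose entries vanish except possibly the $(0,0)$-entry, which is $R_{0,i}(R^{-1})_{i,0}$. Hence $E_0 E^*_i E_0 \neq 0$ if and only if $R_{0,i}(R^{-1})_{i,0} \neq 0$. Since $\F$ has no zero divisors, a product of two scalars is nonzero exactly when both factors are nonzero; thus $R_{0,i}(R^{-1})_{i,0} \neq 0$ if and only if $R_{0,i} \neq 0$ and $(R^{-1})_{i,0} \neq 0$. Chaining these two equivalences gives (i)$\iff$(ii).

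There is essentially no obstacle here: the computational content is entirely contained in Lemma~\ref{lem:PhiRpre}, which was already proved "by matrix multiplication." The only subtlety worth a word is the use of the integral-domain property of $\F$ to split the nonvanishing of a product into the nonvanishing of its factors; this is why the hypothesis "$\F$ denotes a field" from Section~\ref{sec:pre} is being used. The Kronecker-delta factor $\delta_{r,0}\delta_{s,0}$ in \eqref{eq:canaux1} is what lets us reduce "the matrix is nonzero" to "one particular entry is nonzero," and this should be stated explicitly so the reader sees why no other entries can rescue nonvanishing.

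I would keep the write-up to two or three sentences, since a longer argument would only restate the lemma. The companion fact for the dual direction (the analogue of this lemma with $E^*_0 E_i E^*_0$) does not follow quite as mechanically, because \eqref{eq:canaux2} has a product of four scalar factors and a $(r,s)$-dependence, so that statement would need its own short argument; but that is presumably the content of the next lemma, not this one.
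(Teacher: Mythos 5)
Your proposal is correct and matches the paper's proof, which is simply ``Use \eqref{eq:canaux1}''; you have merely spelled out the two steps (the matrix is nonzero iff its $(0,0)$-entry is, and a product of scalars in a field is nonzero iff both factors are) that the paper leaves implicit.
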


\begin{proof}
Use \eqref{eq:canaux1}.
\end{proof}

\begin{lemma}    \label{lem:PhiRpre3}    \samepage
\ifDRAFT {\rm lem:PhiRpre3}. \fi
Referring to \eqref{eq:defEiEsi}, for $0 \leq i \le d$ the following are equivalent:
\begin{itemize}
\item[\rm (i)]
$E^*_0 E_i E^*_0 \neq 0$;
\item[\rm (ii)]
$R_{i,0} \neq 0$ and $(R^{-1})_{0,i} \neq 0$.
\end{itemize}
\end{lemma}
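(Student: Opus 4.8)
The plan is to mimic the proof of Lemma~\ref{lem:PhiRpre2}, using \eqref{eq:canaux2} in place of \eqref{eq:canaux1}. First I would invoke \eqref{eq:canaux2}: for $0 \leq r,s \leq d$ the $(r,s)$-entry of $E^*_0 E_i E^*_0$ equals $R_{r,0} (R^{-1})_{0,i} R_{i,0} (R^{-1})_{0,s}$. Hence $E^*_0 E_i E^*_0 = 0$ if and only if this product vanishes for all choices of $r$ and $s$.

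Next I would observe that since $R$ is invertible, its $0$th column is not the zero vector, so $R_{r,0} \neq 0$ for some $r$; similarly $R^{-1}$ is invertible, so its $0$th row is nonzero, giving $(R^{-1})_{0,s} \neq 0$ for some $s$. Fixing such $r$ and $s$, the displayed product is nonzero precisely when $(R^{-1})_{0,i} R_{i,0} \neq 0$, i.e.\ when $R_{i,0} \neq 0$ and $(R^{-1})_{0,i} \neq 0$. This proves (i)$\Rightarrow$(ii). For the converse, if $R_{i,0} \neq 0$ and $(R^{-1})_{0,i} \neq 0$, then choosing $r = i$ together with any $s$ for which $(R^{-1})_{0,s} \neq 0$ yields a nonzero $(r,s)$-entry, so $E^*_0 E_i E^*_0 \neq 0$.

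There is essentially no obstacle here; the only point requiring a moment's care is the existence of indices $r,s$ with $R_{r,0} \neq 0$ and $(R^{-1})_{0,s} \neq 0$, which is immediate from the invertibility of $R$ and of $R^{-1}$. I expect the entire argument to be a single short paragraph citing \eqref{eq:canaux2}, parallel in form to the proof of Lemma~\ref{lem:PhiRpre2}.
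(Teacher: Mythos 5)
Your proof is correct and is exactly the argument the paper intends: its own proof is the one-liner ``Use \eqref{eq:canaux2},'' and you have simply supplied the routine details (existence of $r$ with $R_{r,0}\neq 0$ and $s$ with $(R^{-1})_{0,s}\neq 0$ from invertibility). No issues.
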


\begin{proof}
Use \eqref{eq:canaux2}.
\end{proof}

\begin{defi}    \label{def:solid}     \samepage
\ifDRAFT {\rm def:solid}. \fi
An invertible matrix $R \in \Matd$ is said to be {\em solid} whenever
the following hold:
\begin{itemize}
\item[\rm (i)]
in column $0$ and row $0$ of $R$ all entries are nonzero;
\item[\rm (ii)]
in column $0$ and row $0$ of $R^{-1}$ all entries are nonzero.
\end{itemize}
\end{defi}

\begin{prop}    \label{prop:canonical2}    \samepage
\ifDRAFT {\rm prop:canonical2}. \fi
Referring to \eqref{eq:defEiEsi},
the following are equivalent:
\begin{itemize}
\item[\rm (i)]
the sequence $(\{E_i\}_{i=0}^d; \{E^*_i\}_{i=0}^d)$ is an idempotent system in $\Matd$;
\item[\rm (ii)]
the matrix $R$ is solid.
\end{itemize}
\end{prop}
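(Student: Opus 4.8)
The plan is to establish the equivalence by directly combining the three preceding lemmas with the definition of an idempotent system and the definition of solidity. Recall that by Example \ref{example} the sequences $\{E_i\}_{i=0}^d$ and $\{E^*_i\}_{i=0}^d$ from \eqref{eq:defEiEsi} already satisfy conditions (i) and (ii) of Definition \ref{def:ips}: the first is $\{\Delta_{i,i}\}_{i=0}^d$, and the second is a conjugate of it by the invertible matrix $R$, hence by Lemma \ref{lem:R} it is also a system of mutually orthogonal rank $1$ idempotents. So the only content is in conditions (iii) and (iv), and the proof reduces to showing that these two conditions together are equivalent to $R$ being solid.

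First I would note that by Lemma \ref{lem:PhiRpre2}, condition (iii) of Definition \ref{def:ips}, namely $E_0 E^*_i E_0 \neq 0$ for all $0 \leq i \leq d$, holds if and only if $R_{0,i} \neq 0$ and $(R^{-1})_{i,0} \neq 0$ for all $0 \leq i \leq d$; that is, if and only if every entry in row $0$ of $R$ is nonzero and every entry in column $0$ of $R^{-1}$ is nonzero. Similarly, by Lemma \ref{lem:PhiRpre3}, condition (iv), namely $E^*_0 E_i E^*_0 \neq 0$ for all $0 \leq i \leq d$, holds if and only if $R_{i,0} \neq 0$ and $(R^{-1})_{0,i} \neq 0$ for all $0 \leq i \leq d$; that is, if and only if every entry in column $0$ of $R$ is nonzero and every entry in row $0$ of $R^{-1}$ is nonzero. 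Conjoining these two characterizations, conditions (iii) and (iv) hold simultaneously if and only if all entries in column $0$ and row $0$ of $R$ are nonzero and all entries in column $0$ and row $0$ of $R^{-1}$ are nonzero, which is precisely the statement that $R$ is solid (Definition \ref{def:solid}).

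Putting this together: (i) of the proposition (the sequence is an idempotent system) is equivalent to the conjunction of Definition \ref{def:ips}(i)--(iv); parts (i) and (ii) hold automatically as remarked above; and parts (iii) and (iv) together are equivalent to $R$ being solid by the argument of the previous paragraph. Hence (i) of the proposition is equivalent to (ii) of the proposition. There is essentially no obstacle here — the work has already been done in Lemmas \ref{lem:PhiRpre}--\ref{lem:PhiRpre3} and in Lemma \ref{lem:R}; the proof is just a matter of assembling those pieces and matching them against Definition \ref{def:solid}. If anything requires a word of care, it is simply being explicit that "column $0$ and row $0$ of $R$ are nonzero" in Definition \ref{def:solid} is exactly the union of the two separate nonvanishing conditions coming from (iii) and (iv), and likewise for $R^{-1}$.
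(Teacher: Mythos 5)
Your proposal is correct and follows exactly the same route as the paper, whose proof is the one-line citation of Definitions \ref{def:ips}, \ref{def:solid} and Lemmas \ref{lem:PhiRpre2}, \ref{lem:PhiRpre3}; you have merely written out in full the assembly that the paper leaves implicit. No gaps.
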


\begin{proof}
By Definitions \ref{def:ips}, \ref{def:solid} and
Lemmas \ref{lem:PhiRpre2}, \ref{lem:PhiRpre3}.
\end{proof}

\begin{defi}   \label{def:PhiR}   \samepage
\ifDRAFT {\rm def:PhiR}. \fi
For a solid invertible matrix $R \in \Matd$ define the sequence
\[
  \Phi_R =  (\{\Delta_{i,i}\}_{i=0}^d;  \{R \Delta_{i,i} R^{-1}\}_{i=0}^d).
\]
Note by Proposition \ref{prop:canonical2} that $\Phi_R$ is an idempotent system in $\Matd$.
\end{defi}

\begin{prop}    \label{prop:canonical0}     \samepage
\ifDRAFT {\rm lem:canonical0}. \fi
Every idempotent system in $\A$ is isomorphic to $\Phi_R$ for some
solid invertible $R \in \Matd$.
\end{prop}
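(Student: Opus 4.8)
The plan is to transport the abstract system $\Phi$ into $\Matd$ via the isomorphism furnished by Lemma \ref{lem:E}, and then recognize its image as $\Phi_R$ for a suitable matrix $R$. There is essentially no combinatorial content here: the proposition is an assembly of Lemma \ref{lem:E}, Lemma \ref{lem:R}, and Proposition \ref{prop:canonical2}.

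First I would fix an idempotent system $\Phi = (\{E_i\}_{i=0}^d; \{E^*_i\}_{i=0}^d)$ in $\A$. Since $\{E_i\}_{i=0}^d$ is a system of mutually orthogonal rank $1$ idempotents in $\A$, Lemma \ref{lem:E} supplies an algebra isomorphism $\sigma : \A \to \Matd$ with $E_i^\sigma = \Delta_{i,i}$ for $0 \le i \le d$. As observed in Section \ref{sec:ips}, the sequence $\Phi^\sigma = (\{\Delta_{i,i}\}_{i=0}^d; \{(E^*_i)^\sigma\}_{i=0}^d)$ is again an idempotent system, now in $\Matd$, and $\sigma$ is an isomorphism of idempotent systems from $\Phi$ to $\Phi^\sigma$; in particular $\Phi$ is isomorphic to $\Phi^\sigma$.

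Next I would apply Lemma \ref{lem:R} to the system $\{(E^*_i)^\sigma\}_{i=0}^d$ of mutually orthogonal rank $1$ idempotents in $\Matd$: there exists an invertible $R \in \Matd$ with $(E^*_i)^\sigma = R \Delta_{i,i} R^{-1}$ for $0 \le i \le d$. Then $\Phi^\sigma = (\{\Delta_{i,i}\}_{i=0}^d; \{R \Delta_{i,i} R^{-1}\}_{i=0}^d)$ is exactly the sequence \eqref{eq:defEiEsi} built from this $R$. Since $\Phi^\sigma$ is an idempotent system in $\Matd$, Proposition \ref{prop:canonical2} forces $R$ to be solid, and then $\Phi^\sigma = \Phi_R$ by Definition \ref{def:PhiR}. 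Hence $\Phi$ is isomorphic to $\Phi_R$ with $R$ solid invertible, as desired.

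I do not anticipate a genuine obstacle. The only point worth a line of care is verifying that $\Phi^\sigma$ inherits conditions (iii) and (iv) of Definition \ref{def:ips}, but this is immediate: $\sigma$ is an algebra isomorphism, so it carries $E_0 E^*_i E_0$ to $\Delta_{0,0}\,(E^*_i)^\sigma\,\Delta_{0,0}$ and $E^*_0 E_i E^*_0$ to $(E^*_0)^\sigma \Delta_{i,i} (E^*_0)^\sigma$, and it preserves the property of being nonzero; alternatively one simply invokes the general remark in Section \ref{sec:ips} that $\Phi^\sigma$ is an idempotent system whenever $\Phi$ is.
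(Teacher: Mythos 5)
Your proposal is correct and follows the paper's own proof essentially verbatim: transport $\Phi$ into $\Matd$ via the isomorphism from Lemma \ref{lem:E}, obtain $R$ from Lemma \ref{lem:R}, and conclude solidity from Proposition \ref{prop:canonical2}. The extra remark on why $\Phi^\sigma$ inherits conditions (iii) and (iv) of Definition \ref{def:ips} is a reasonable bit of added care but is already covered by the general observation in Section \ref{sec:ips}.
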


\begin{proof}
Let $\Phi = (\{E_i\}_{i=0}^d; \{E^*_i\}_{i=0}^d)$ denote an idempotent system in $\A$.
By Lemma \ref{lem:E}  there exists an algebra isomorphism $\sigma : \A \to \Matd$ that sends
$E_i \mapsto \Delta_{i,i}$ for $0 \leq i \leq d$.
Then $\Phi^\sigma$ is an idempotent system in $\Matd$, and $\sigma$ is an isomorphism
of idempotent systems from $\Phi$ to $\Phi^\sigma$.
By Lemma \ref{lem:R} there exists an invertible $R \in \Matd$
such that $(E^*_i)^\sigma = R \Delta_{i,i} R^{-1}$ for $0 \leq i \leq d$.
The idempotent system $\Phi^\sigma$ has the form
\[
   \Phi^\sigma = (\{\Delta_{i,i} \}_{i=0}^d;  \{ R \Delta_{i,i} R^{-1} \}_{i=0}^d).
\]
The matrix $R$ is solid by Proposition \ref{prop:canonical2},
and $\Phi^\sigma = \Phi_R$ by Definition \ref{def:PhiR}.
The result follows.
\end{proof}

\begin{defi}    \label{def:diagequiv}     \samepage
\ifDRAFT {\rm def:diagequiv}. \fi
Matrices $S$, $T$ in $\Matd$ are said to be {\em diagonally equivalent}
whenever there exist invertible diagonal matrices $H$, $K$ in $\Matd$ such that
$T = H S K$.
\end{defi}

Note that diagonal equivalence is an equivalence relation on $\Matd$.

Let $R$ denote a solid invertible matrix in $\Matd$ and let $H$, $K$ denote
invertible diagonal matrices in $\Matd$.
For $0 \leq r,s \leq d$ the $(r,s)$-entries of 
$H R K$ and $(H R K)^{-1}$ are 
\begin{align}
  (H R K)_{r,s} &= R_{r,s}  H_{r,r}  K_{s,s},   &
  \big( (H R K)^{-1}\big)_{r,s} &= \frac{ (R^{-1})_{r,s} } {H_{s,s}  K_{r,r} }.         \label{eq:HRK}
\end{align}

\begin{lemma}    \label{lem:edgeequiv}     \samepage
\ifDRAFT {\rm lem:edgeequiv}. \fi
Let $R$ denote a solid invertible matrix in $\Matd$.
Then every matrix that is diagonally equivalent to $R$ is solid invertible.
\end{lemma}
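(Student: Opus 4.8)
The plan is to use the explicit entry formulas in \eqref{eq:HRK} to check directly that if $R$ is solid invertible, then so is any $S = HRK$ with $H$, $K$ invertible diagonal. Since $R$ is invertible and $H$, $K$ are invertible, the product $S = HRK$ is invertible, so it remains to verify the two conditions in Definition \ref{def:solid} for $S$ and for $S^{-1}$.

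First I would treat $S = HRK$. By the left equation in \eqref{eq:HRK}, for $0 \leq r,s \leq d$ we have $S_{r,s} = R_{r,s} H_{r,r} K_{s,s}$. Since $H$ and $K$ are invertible diagonal matrices, $H_{r,r} \neq 0$ and $K_{s,s} \neq 0$ for all $r,s$. Hence $S_{r,s} = 0$ if and only if $R_{r,s} = 0$. In particular, the entries of $S$ in column $0$ (take $s = 0$) are nonzero because the corresponding entries of $R$ are nonzero by Definition \ref{def:solid}(i), and likewise the entries of $S$ in row $0$ (take $r = 0$) are nonzero. This gives Definition \ref{def:solid}(i) for $S$.

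Next I would treat $S^{-1}$. By the right equation in \eqref{eq:HRK}, $(S^{-1})_{r,s} = (R^{-1})_{r,s} / (H_{s,s} K_{r,r})$, and again the denominator is nonzero for all $r,s$. So $(S^{-1})_{r,s} = 0$ if and only if $(R^{-1})_{r,s} = 0$, and Definition \ref{def:solid}(ii) for $R$ transfers verbatim to $S^{-1}$, giving Definition \ref{def:solid}(ii) for $S$. Combining the two paragraphs, $S$ is solid invertible.

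There is no real obstacle here; the only point worth stating carefully is that \eqref{eq:HRK} was derived for a solid invertible $R$, and that the formulas show the \emph{pattern of zero entries} of $R$ and $R^{-1}$ is preserved under left and right multiplication by invertible diagonal matrices — which is exactly what solidity depends on. One could alternatively phrase the argument without \eqref{eq:HRK} by noting that conjugation-free pre- and post-multiplication by invertible diagonal matrices rescales rows and columns without creating or destroying zeros, but invoking \eqref{eq:HRK} makes the bookkeeping for $S^{-1}$ immediate and keeps the proof short.
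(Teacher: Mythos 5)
Your proof is correct and follows the same route as the paper, which simply cites \eqref{eq:HRK}; you have just spelled out the details of how those entry formulas show that the zero patterns of $S$ and $S^{-1}$ match those of $R$ and $R^{-1}$. No issues.
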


\begin{proof}
Use \eqref{eq:HRK}.
\end{proof}

\begin{prop}    \label{prop:RS}    \samepage
\ifDRAFT {\rm prop:RS}. \fi
For solid invertible matrices $R$, $S$ in $\Matd$
the following are equivalent:
\begin{itemize}
\item[\rm (i)]
$R$ and $S$ are diagonally equivalent;
\item[\rm (ii)]
the idempotent systems $\Phi_R$ and $\Phi_S$ are isomorphic.
\end{itemize}
Suppose {\rm (i)}, {\rm (ii)} hold.
Let $H$, $K$ denote invertible diagonal matrices in $\Matd$ such that $S = H R K$.
Then the automorphism of $\Matd$ that sends $A \mapsto H A H^{-1}$ is an isomorphism of
idempotent systems from $\Phi_R$ to $\Phi_S$.
\end{prop}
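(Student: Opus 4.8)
The plan is to prove the two implications of the equivalence, and then verify the concrete ``moreover'' claim about the isomorphism $A \mapsto HAH^{-1}$; in fact establishing the ``moreover'' claim is the natural way to get (i)$\Rightarrow$(ii), so I would do that first. Assume $S = HRK$ with $H,K$ invertible diagonal. By Lemma \ref{lem:edgeequiv} the matrix $S$ is solid invertible, so $\Phi_S$ is defined. Let $\sigma : \Matd \to \Matd$ be the automorphism $A \mapsto HAH^{-1}$; by Skolem--Noether (or directly) this is an algebra automorphism of $\Matd$. I must check $\Phi_R^\sigma = \Phi_S$, i.e.\ that $\sigma$ sends $\Delta_{i,i} \mapsto \Delta_{i,i}$ and $R\Delta_{i,i}R^{-1} \mapsto S \Delta_{i,i} S^{-1}$ for $0 \le i \le d$. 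For the first: $H$ is diagonal, hence commutes with each $\Delta_{i,i}$ (Lemma \ref{lem:diag}), so $H \Delta_{i,i} H^{-1} = \Delta_{i,i}$. For the second: $H (R\Delta_{i,i}R^{-1}) H^{-1} = (HR)\Delta_{i,i}(HR)^{-1}$; and since $K$ is diagonal it commutes with $\Delta_{i,i}$, so $S\Delta_{i,i}S^{-1} = HRK\,\Delta_{i,i}\,K^{-1}R^{-1}H^{-1} = HR\,\Delta_{i,i}\,R^{-1}H^{-1}$, which agrees. Hence $\Phi_R^\sigma = \Phi_S$, so $\sigma$ is an isomorphism of idempotent systems from $\Phi_R$ to $\Phi_S$; this proves the ``moreover'' part and with it (i)$\Rightarrow$(ii).

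For (ii)$\Rightarrow$(i), suppose $\tau : \Matd \to \Matd$ is an algebra isomorphism with $\Phi_R^\tau = \Phi_S$. By Skolem--Noether there is an invertible $T \in \Matd$ with $A^\tau = T A T^{-1}$ for all $A$. The condition $\Phi_R^\tau = \Phi_S$ says $T \Delta_{i,i} T^{-1} = \Delta_{i,i}$ and $T R \Delta_{i,i} R^{-1} T^{-1} = S \Delta_{i,i} S^{-1}$ for $0 \le i \le d$. From the first family of equations, $T$ commutes with every $\Delta_{i,i}$, hence (Lemma \ref{lem:diag}, since the $\Delta_{i,i}$ span $\mathcal D$) $T$ is diagonal; write $T = H$ with $H \in \mathcal D$ invertible. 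From the second family, $(S^{-1}TR)\Delta_{i,i}(S^{-1}TR)^{-1} = \Delta_{i,i}$ for all $i$, so by the same reasoning $S^{-1}TR =: K$ is an invertible diagonal matrix. Then $TR = SK$, i.e.\ $S = TRK^{-1} = H R K^{-1}$, exhibiting $R$ and $S$ as diagonally equivalent (with $K^{-1}$ in place of $K$). This gives (i).

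The only mild subtlety — the ``main obstacle'', such as it is — is the bookkeeping around Skolem--Noether: one must remember that the $T$ conjugating $\tau$ is only determined up to a nonzero scalar, but this causes no trouble since scalars are diagonal and absorb harmlessly into $H$ (or $K$). I would also note in passing that the case $d = 0$ is trivial, as every $1\times 1$ matrix statement here is vacuous or immediate. Everything else is routine matrix algebra, using only that diagonal matrices commute with the $\Delta_{i,i}$ and that conjugation is multiplicative.
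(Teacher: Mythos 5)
Your proof is correct and follows essentially the same route as the paper: the forward direction and the ``moreover'' claim are established by checking directly that $A \mapsto HAH^{-1}$ fixes each $\Delta_{i,i}$ and sends $R\Delta_{i,i}R^{-1}$ to $S\Delta_{i,i}S^{-1}$ using that diagonal matrices commute with the $\Delta_{i,i}$, while the converse uses Skolem--Noether to write the isomorphism as conjugation by some $T$, deduces that $T$ is diagonal, and then shows a suitable product (your $S^{-1}TR$, the paper's $R^{-1}H^{-1}S$ --- one is the inverse of the other) commutes with all $\Delta_{i,i}$ and hence is diagonal. The only differences are cosmetic bookkeeping.
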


\begin{proof}
(i) $\Rightarrow$ (ii)
There exist invertible diagonal matrices $H$, $K$ in $\Matd$ such that $S = H R K$.
Consider the automorphism $\sigma$ of $\Matd$ that sends $A \mapsto H A H^{-1}$ for $A \in \Matd$.
We show that $\sigma$ is an isomorphism of idempotent systems from $\Phi_R$ to $\Phi_S$.
For $0 \leq i \leq d$ we have $H \Delta_{i,i} H^{-1} = \Delta_{i,i}$ by Lemma \ref{lem:diag},
and so $\Delta_{i,i}^\sigma = \Delta_{i,i}$.
Using $H R = S K^{-1}$ and Lemma \ref{lem:diag}, we find that for $0 \leq i \leq d$,
\[
 H R \Delta_{i,i} R^{-1} H^{-1}
  = S K^{-1} \Delta_{i,i} K S^{-1} = S \Delta_{i,i} S^{-1}.
\]
So $\sigma$ sends $R \Delta_{i,i} R^{-1} \mapsto S \Delta_{i,i} S^{-1}$.
By these comments $\sigma$ is an isomorphism of idempotent systems from $\Phi_R$ to $\Phi_S$.

(ii) $\Rightarrow$ (i)
Let $\sigma$ denote an isomorphism of idempotent systems from $\Phi_R$ to $\Phi_S$.
By the Skolem-Noether theorem, there exists an invertible $H \in \Matd$
such that $A^\sigma = H A H^{-1}$ for $A \in \Matd$.
For $0 \leq i \leq d$, $\sigma$ fixes $\Delta_{i,i}$ and so $H \Delta_{i,i} H^{-1} = \Delta_{i,i}$.
By this and Lemma \ref{lem:diag}, $H$ is diagonal.
For $0 \leq i \leq d$, $\sigma$ sends $R \Delta_{i,i}R^{-1} \mapsto S \Delta_{i,i} S^{-1}$,
so
\[
  H R \Delta_{i,i} R^{-1} H^{-1} = S \Delta_{i,i} S^{-1}.
\]
Thus  $R^{-1} H^{-1} S$ commutes with $\Delta_{i,i}$ for $0 \leq i \leq d$.
By this and Lemma \ref{lem:diag}, $R^{-1} H^{-1} S$ is diagonal;
denote this diagonal matrix by $K$.
Then $S = H R K$.
The matrices $H$, $K$ are diagonal, so $R$ and $S$ are diagonally equivalent.

Suppose (i), (ii) hold.
In the proof of (i) $\Rightarrow$ (ii), we have shown the last assertion of the
proposition statement.
\end{proof}

\begin{defi}    \label{def:Phirho}    \samepage
\ifDRAFT {\rm def:Phirho}. \fi
Let $\rho$ denote a diagonal equivalence class of solid invertible matrices in $\Matd$.
By Proposition \ref{prop:RS} the set $\{ \Phi_R \,|\, R \in \rho\}$ is contained in 
an isomorphism class of idempotent systems;
denote this isomorphism class by $\Phi_\rho$.
\end{defi}

In the next result we classify the idempotent systems up to isomorphism.

\begin{theorem}    \label{thm:main1}    \samepage
\ifDRAFT {\rm thm:main1}. \fi
Consider the following sets:
\begin{itemize}
\item[\rm (i)]
the diagonal equivalence classes of solid invertible matrices in $\Matd$;
\item[\rm (ii)]
the isomorphism classes of idempotent systems over $\F$ with diameter $d$.
\end{itemize}
The map $\rho \mapsto \Phi_\rho$ is a bijection from {\rm (i)} to {\rm (ii)}.
\end{theorem}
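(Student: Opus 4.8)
The plan is to assemble the theorem from the three ingredients already in hand: Definition \ref{def:Phirho} (well-definedness of the map), Proposition \ref{prop:canonical0} (surjectivity), and Proposition \ref{prop:RS} (injectivity). No new computation is needed; the task is purely to package these facts.

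First I would check that $\rho \mapsto \Phi_\rho$ is a well-defined function from set (i) to set (ii). Given a diagonal equivalence class $\rho$ of solid invertible matrices in $\Matd$, Lemma \ref{lem:edgeequiv} shows every element of $\rho$ is solid invertible, so $\Phi_R$ is an idempotent system in $\Matd$ for each $R \in \rho$ by Definition \ref{def:PhiR}; and by Proposition \ref{prop:RS}(i)$\Rightarrow$(ii) all these $\Phi_R$ lie in a single isomorphism class of idempotent systems over $\F$ with diameter $d$, which is exactly the class $\Phi_\rho$ of Definition \ref{def:Phirho}.

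Next, surjectivity. Let $C$ be an isomorphism class of idempotent systems over $\F$ with diameter $d$, and choose a representative $\Phi \in C$, an idempotent system in some algebra $\A$ isomorphic to $\Matd$. By Proposition \ref{prop:canonical0} there is a solid invertible $R \in \Matd$ with $\Phi$ isomorphic to $\Phi_R$; letting $\rho$ be the diagonal equivalence class of $R$, we get $\Phi_\rho = C$. This step also confirms the set-theoretic soundness of set (ii): each of its members is represented by some $\Phi_R$ in $\Matd$.

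Finally, injectivity. Suppose $\rho$, $\rho'$ are diagonal equivalence classes of solid invertible matrices with $\Phi_\rho = \Phi_{\rho'}$. Pick $R \in \rho$ and $S \in \rho'$. Then $\Phi_R$ and $\Phi_S$ both belong to this common isomorphism class, so they are isomorphic; by Proposition \ref{prop:RS}(ii)$\Rightarrow$(i), $R$ and $S$ are diagonally equivalent, hence $\rho = \rho'$. Since all the substantive work — in particular the two applications of the Skolem--Noether theorem buried in Propositions \ref{prop:canonical0} and \ref{prop:RS} — has already been done, there is no real obstacle at this stage; the only point deserving a moment's care is the one noted above, that every isomorphism class of idempotent systems has a representative of the form $\Phi_R$, which Proposition \ref{prop:canonical0} supplies.
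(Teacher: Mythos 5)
Your proposal is correct and follows exactly the paper's argument: well-definedness via Definition \ref{def:Phirho}, surjectivity via Proposition \ref{prop:canonical0}, and injectivity via Proposition \ref{prop:RS}. The paper's own proof is a one-line citation of these same two propositions, so you have simply spelled out the same reasoning in more detail.
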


\begin{proof}
The given map is surjective by Proposition \ref{prop:canonical0}
and injective by Proposition \ref{prop:RS}.
\end{proof}

We have some comments.

\begin{lemma}    \label{lem:Rinv}    \samepage
\ifDRAFT {\rm lem:Rinv}. \fi
For a solid invertible $R \in \Matd$
the matrix $R^{-1}$ is solid invertible.
Moreover the map $\Matd \to \Matd$,  $A \mapsto R A R^{-1}$ is an isomorphism of
idempotent systems from $\Phi_{R^{-1}}$ to $(\Phi_R)^*$.
\end{lemma}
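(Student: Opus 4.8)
The plan is to prove the two assertions separately, each by a short direct computation; no deep input is needed beyond Definitions \ref{def:solid} and \ref{def:PhiR} and the elementary fact that conjugation by an invertible matrix is an algebra automorphism.

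First I would dispose of solidity of $R^{-1}$. Applying Definition \ref{def:solid} to $R$, all entries in column $0$ and row $0$ of $R$ are nonzero, and all entries in column $0$ and row $0$ of $R^{-1}$ are nonzero. Since $(R^{-1})^{-1} = R$, these are exactly the two conditions of Definition \ref{def:solid} for $R^{-1}$, merely listed in the opposite order. Hence $R^{-1}$ is solid invertible, and therefore $\Phi_{R^{-1}}$ is a well-defined idempotent system in $\Matd$ by Definition \ref{def:PhiR} (equivalently, by Proposition \ref{prop:canonical2}).

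For the second assertion I would set $\sigma : \Matd \to \Matd$, $A \mapsto R A R^{-1}$; this is an algebra automorphism of $\Matd$ (it is conjugation by an invertible element, one direction of the Skolem–Noether description recalled in Section \ref{sec:pre}). It remains to check that $\Phi_{R^{-1}}^\sigma = (\Phi_R)^*$. Recall $\Phi_{R^{-1}} = (\{\Delta_{i,i}\}_{i=0}^d;\ \{R^{-1}\Delta_{i,i}R\}_{i=0}^d)$, so $\Phi_{R^{-1}}^\sigma = (\{\Delta_{i,i}^\sigma\}_{i=0}^d;\ \{(R^{-1}\Delta_{i,i}R)^\sigma\}_{i=0}^d)$. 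Now $\Delta_{i,i}^\sigma = R\Delta_{i,i}R^{-1}$ and $(R^{-1}\Delta_{i,i}R)^\sigma = R R^{-1}\Delta_{i,i}R R^{-1} = \Delta_{i,i}$ for $0 \leq i \leq d$. Thus $\Phi_{R^{-1}}^\sigma = (\{R\Delta_{i,i}R^{-1}\}_{i=0}^d;\ \{\Delta_{i,i}\}_{i=0}^d)$, which is precisely the dual $(\Phi_R)^*$ of $\Phi_R = (\{\Delta_{i,i}\}_{i=0}^d;\ \{R\Delta_{i,i}R^{-1}\}_{i=0}^d)$. Hence $\sigma$ is an isomorphism of idempotent systems from $\Phi_{R^{-1}}$ to $(\Phi_R)^*$.

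I do not expect a genuine obstacle here: both parts are bookkeeping. The only point requiring care is that $\sigma$ conjugates by $R$ rather than by $R^{-1}$ — this is exactly what makes the second components $R^{-1}\Delta_{i,i}R$ of $\Phi_{R^{-1}}$ collapse to $\Delta_{i,i}$ and the first components $\Delta_{i,i}$ expand to $R\Delta_{i,i}R^{-1}$, matching the dual — and that in the first part one must read the solidity conditions for $R^{-1}$ against the conditions already known for $R$ via $(R^{-1})^{-1}=R$.
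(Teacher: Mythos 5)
Your proposal is correct and follows essentially the same route as the paper: solidity of $R^{-1}$ is immediate from the symmetry of Definition \ref{def:solid} under $R \leftrightarrow R^{-1}$, and the second assertion is the same direct computation showing that conjugation by $R$ carries $\Phi_{R^{-1}} = (\{\Delta_{i,i}\}_{i=0}^d; \{R^{-1}\Delta_{i,i}R\}_{i=0}^d)$ onto $(\Phi_R)^* = (\{R\Delta_{i,i}R^{-1}\}_{i=0}^d; \{\Delta_{i,i}\}_{i=0}^d)$. Your write-up merely spells out the bookkeeping that the paper leaves implicit.
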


\begin{proof}
The matrix $R^{-1}$ is solid invertible by Definition \ref{def:solid}.
We have
\begin{align*}
 \Phi_{R^{-1}} &= (\{\Delta_{i,i}\}_{i=0}^d; \{ R^{-1} \Delta_{i,i} R \}_{i=0}^d),   &
 (\Phi_R)^* &= (\{R \Delta_{i,i} R^{-1} \}_{i=0}^d; \{\Delta_{i,i}\}_{i=0}^d).
\end{align*}
Thus the map $A \mapsto R A R^{-1}$ is an isomorphism of idempotent systems
from $\Phi_{R^{-1}}$ to $(\Phi_R)^*$.
\end{proof}

For a solid invertible $R \in \Matd$, note by Definition \ref{def:solid}
that $R^{\sf t}$ is solid invertible.
In Section \ref{sec:symmetric} we consider the case in which $R^{\sf t}$ is diagonally
equivalent to $R^{-1}$.

\section{Symmetric idempotent systems}
\label{sec:symmetric}

In \cite{NT:ips} we introduced a type of idempotent system,
said to be symmetric.
In this section we classify the symmetric idempotent systems
in terms of solid invertible matrices.

\begin{defi}  {\rm (See \cite[Definition 5.1]{NT:ips}.)  } 
\label{def:sym}    \samepage
\ifDRAFT {\rm def:sym}. \fi
Let $\Phi = (\{E_i\}_{i=0}^d; \{E^*_i\}_{i=0}^d)$ denote an idempotent system in $\A$.
We say that $\Phi$ is {\em symmetric} whenever there exists an antiautomorphism $\dagger$ of $\A$
that fixes each of $E_i$, $E^*_i$ for $0 \leq i \leq d$.
\end{defi}

\begin{lemma}    {\rm (See \cite[Lemma 5.2]{NT:ips}.) }
Referring to Definition \ref{def:sym},
the antiautomorphism $\dagger$ is unique and $(A^\dagger)^\dagger  = A$ for $A \in \A$.
\end{lemma}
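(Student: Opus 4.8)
The plan is to reduce both assertions — uniqueness of $\dagger$, and $(A^\dagger)^\dagger = A$ — to the single rigidity fact that $\mathrm{id}_{\A}$ is the only automorphism of $\A$ fixing every $E_i$ and every $E^*_i$. Granting this, I would argue as follows. A composite of two antiautomorphisms of $\A$ is an automorphism of $\A$ (immediate from applying $(YZ)^\tau = Z^\tau Y^\tau$ twice), so $A \mapsto (A^\dagger)^\dagger$ is an automorphism of $\A$; it fixes each $E_i$ and $E^*_i$ since $\dagger$ does, hence by rigidity it equals $\mathrm{id}_{\A}$, i.e. $(A^\dagger)^\dagger = A$ for all $A \in \A$. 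In particular $\dagger$ is an involution, so $\dagger^{-1} = \dagger$. For uniqueness, if $\dagger'$ is another antiautomorphism fixing all $E_i$, $E^*_i$, then $A \mapsto (A^{\dagger'})^\dagger$ is an automorphism fixing all of them, hence $\mathrm{id}_{\A}$; applying $\dagger$ to the identity $(A^{\dagger'})^\dagger = A$ and using $\dagger^2 = \mathrm{id}_{\A}$ gives $A^{\dagger'} = A^\dagger$, so $\dagger' = \dagger$.

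It remains to prove the rigidity fact, where the real work lies. Let $\sigma$ be an automorphism of $\A$ fixing each $E_i$, $E^*_i$. Since $\A \cong \Matd$, the Skolem--Noether theorem (Section \ref{sec:pre}) shows $\sigma$ is inner: $A^\sigma = W A W^{-1}$ for some invertible $W \in \A$. From $E_i^\sigma = E_i$ I get $W E_i = E_i W$ for all $i$, so $W$ centralizes the subalgebra $\mathcal M$; transporting along the isomorphism of Lemma \ref{lem:E} (which carries $\{E_i\}_{i=0}^d$ onto $\{\Delta_{i,i}\}_{i=0}^d$, hence $\mathcal M$ onto the diagonal subalgebra) and applying Lemma \ref{lem:diag}, I conclude $W \in \mathcal M$. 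The identical argument with $\{E^*_i\}_{i=0}^d$ in place of $\{E_i\}_{i=0}^d$ gives $W \in \mathcal M^*$, the commutative subalgebra of $\A$ spanned by $\{E^*_i\}_{i=0}^d$.

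So what I need is $\mathcal M \cap \mathcal M^* = \F I$. Write $W = \sum_j a_j E_j = \sum_k b_k E^*_k$. Then $E_0 W = a_0 E_0$ on the one hand, and $E_0 W = \sum_k b_k E_0 E^*_k$ on the other; right-multiplying the equality $a_0 E_0 = \sum_k b_k E_0 E^*_k$ by $E^*_\ell$ collapses the sum and yields $a_0 E_0 E^*_\ell = b_\ell E_0 E^*_\ell$. Since $E_0 E^*_\ell E_0 \ne 0$ by Definition \ref{def:ips}(iii), we have $E_0 E^*_\ell \ne 0$, so $a_0 = b_\ell$ for every $\ell$; hence $W = a_0 \sum_\ell E^*_\ell = a_0 I$ by Lemma \ref{lem:Ei}. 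Thus $W$ is a scalar, $\sigma = \mathrm{id}_{\A}$, and the rigidity fact is proved.

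The routine parts are the composite-of-antiautomorphisms remark, the appeal to Skolem--Noether, and the centralizer computation via Lemmas \ref{lem:diag} and \ref{lem:E}. The crux — the only step that uses the idempotent-system axioms in an essential way — is the identity $\mathcal M \cap \mathcal M^* = \F I$, which is exactly where Definition \ref{def:ips}(iii) (or, symmetrically, (iv)) enters and is why the antiautomorphism is pinned down; that is the step I would write out with the most care.
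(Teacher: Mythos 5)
Your argument is correct and complete. Note, however, that the paper itself supplies no proof of this lemma: it is quoted from \cite[Lemma 5.2]{NT:ips}, so there is no in-paper argument to compare against. What you have written is a self-contained proof using only tools available in the present paper: the reduction of both claims to the rigidity statement that $\mathrm{id}_{\A}$ is the unique automorphism fixing all $E_i$, $E^*_i$ is clean; the Skolem--Noether step and the centralizer computation via Lemmas \ref{lem:diag} and \ref{lem:E} are correctly handled; and you have correctly identified the crux, namely that $\mathcal M\cap\mathcal M^*=\F I$, which is exactly where Definition \ref{def:ips}(iii) is needed (via $E_0E^*_\ell E_0\neq 0\Rightarrow E_0E^*_\ell\neq 0$). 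The order of the two deductions matters and you got it right: you must establish $\dagger^2=\mathrm{id}_{\A}$ first, since the uniqueness argument uses it to cancel the outer $\dagger$ from $(A^{\dagger'})^\dagger=A$. This is a perfectly adequate replacement for the citation.
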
 

\begin{prop}    \label{prop:canonical3}    \samepage
\ifDRAFT {\rm prop:canonical3}. \fi
For a solid invertible $R \in \Matd$ the following are equivalent:
\begin{itemize}
\item[\rm (i)]
$R^{\sf t}$ is diagonally equivalent to $R^{-1}$;
\item[\rm (ii)]
the idempotent system $\Phi_R$ is symmetric.
\end{itemize}
Suppose {\rm (i)}, {\rm (ii)} hold.
Let $H$, $K$ denote invertible diagonal matrices in $\Matd$ such that $R^{\sf t} = H R^{-1} K$.
Let $\dagger$ denote the antiautomorphism of $\Matd$ corresponds to $\Phi_R$.
Then $\dagger$ sends $A \mapsto K A^{\sf t} K^{-1}$ for $A \in \Matd$.
\end{prop}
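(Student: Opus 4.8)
The plan is to characterize when the idempotent system $\Phi_R = (\{\Delta_{i,i}\}_{i=0}^d; \{R\Delta_{i,i}R^{-1}\}_{i=0}^d)$ is symmetric by unwinding Definition~\ref{def:sym}. An antiautomorphism $\dagger$ of $\Matd$ has the form $A \mapsto T A^{\sf t} T^{-1}$ for some invertible $T$, so the condition is that there exists an invertible $T$ with $T\Delta_{i,i}^{\sf t}T^{-1} = \Delta_{i,i}$ and $T(R\Delta_{i,i}R^{-1})^{\sf t}T^{-1} = R\Delta_{i,i}R^{-1}$ for all $i$. The first family of equations says $T$ commutes with every $\Delta_{i,i}$ (since $\Delta_{i,i}^{\sf t} = \Delta_{i,i}$), so by Lemma~\ref{lem:diag} $T$ is diagonal. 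The second family, after substituting $(R\Delta_{i,i}R^{-1})^{\sf t} = (R^{-1})^{\sf t}\Delta_{i,i}R^{\sf t}$, says that $R^{-1}T(R^{-1})^{\sf t}\Delta_{i,i}R^{\sf t}T^{-1}R$ equals $\Delta_{i,i}$, i.e. the matrix $R^{\sf t}T^{-1}R$ commutes with every $\Delta_{i,i}$, hence is diagonal by Lemma~\ref{lem:diag}.

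So for (ii)~$\Rightarrow$~(i): given that $\Phi_R$ is symmetric, pick the corresponding $\dagger$, write it as $A \mapsto TA^{\sf t}T^{-1}$, deduce $T$ is diagonal and $L := R^{\sf t}T^{-1}R$ is diagonal; then $R^{\sf t} = L R^{-1} T$, which exhibits $R^{\sf t}$ as diagonally equivalent to $R^{-1}$ via the invertible diagonal matrices $L$ and $T$. For (i)~$\Rightarrow$~(ii): given invertible diagonal $H, K$ with $R^{\sf t} = HR^{-1}K$, set $\dagger : A \mapsto KA^{\sf t}K^{-1}$ (note $K$ is diagonal so this is a well-defined antiautomorphism of $\Matd$, and it fixes each $\Delta_{i,i}$ since $\Delta_{i,i}^{\sf t} = \Delta_{i,i}$ and $K$ is diagonal). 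To check $\dagger$ fixes $E^*_i = R\Delta_{i,i}R^{-1}$, compute $(R\Delta_{i,i}R^{-1})^\dagger = K(R^{-1})^{\sf t}\Delta_{i,i}R^{\sf t}K^{-1}$, and use $R^{\sf t} = HR^{-1}K$ (equivalently $(R^{-1})^{\sf t} = K^{-1}RH^{-1}$, $R^{\sf t}K^{-1} = HR^{-1}$) to rewrite this as $K K^{-1}RH^{-1}\Delta_{i,i}HR^{-1} = R(H^{-1}\Delta_{i,i}H)R^{-1} = R\Delta_{i,i}R^{-1}$, the last step because $H$ is diagonal and commutes with $\Delta_{i,i}$. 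Since $\dagger$ fixes all $E_i$ and $E^*_i$, the system $\Phi_R$ is symmetric.

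For the final assertion: by the uniqueness of the antiautomorphism associated with a symmetric idempotent system (the Lemma following Definition~\ref{def:sym}), the $\dagger$ constructed in the (i)~$\Rightarrow$~(ii) direction \emph{is} the antiautomorphism $\dagger$ corresponding to $\Phi_R$, so $\dagger$ sends $A \mapsto KA^{\sf t}K^{-1}$ as claimed; and since $R^{\sf t} = HR^{-1}K$ is exactly the hypothesis on $H, K$, this matches the proposition statement verbatim. I would organize the writeup as: first establish the shape of $\dagger$ and the diagonality reductions via Lemma~\ref{lem:diag}, then do the two implications, then invoke uniqueness for the last sentence.

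The main obstacle is bookkeeping with transposes and inverses: one must be careful that $(R^{-1})^{\sf t} = (R^{\sf t})^{-1}$ and that the diagonal matrices $H, K$ do not in general commute with $R$ but \emph{do} commute with each $\Delta_{i,i}$ (Lemma~\ref{lem:diag}), which is what makes the conjugation $H^{-1}\Delta_{i,i}H = \Delta_{i,i}$ collapse. A secondary subtlety is making sure the candidate map $A \mapsto KA^{\sf t}K^{-1}$ really is an antiautomorphism — this is immediate from the characterization of antiautomorphisms of $\Matd$ recalled in Section~\ref{sec:pre} — and checking it is an \emph{involution}, which follows since $K$ diagonal gives $K^{\sf t} = K$, though this last point is not strictly needed here as it is already guaranteed by the uniqueness lemma.
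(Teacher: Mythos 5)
Your proof is correct and follows essentially the same route as the paper's: write the antiautomorphism as $A \mapsto TA^{\sf t}T^{-1}$, use Lemma \ref{lem:diag} to force $T$ and $R^{\sf t}T^{-1}R$ to be diagonal for (ii)$\Rightarrow$(i), verify directly that $A \mapsto KA^{\sf t}K^{-1}$ fixes each $E_i$ and $E^*_i$ for (i)$\Rightarrow$(ii), and read off the last assertion from that construction together with uniqueness of $\dagger$. The algebra with transposes and inverses checks out throughout.
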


\begin{proof}
For $0 \leq i \leq d$ define
\begin{align*}
  E_i &= \Delta_{i,i},   &   E^*_i &= R \Delta_{i,i} R^{-1}.
\end{align*}
Note that $\Phi_R = (\{E_i\}_{i=0}^d; \{E^*_i\}_{i=0}^d)$.

(i) $\Rightarrow$ (ii)
Let $H$, $K$ denote invertible diagonal matrices in $\Matd$ such that $R^{\sf t} = H R^{-1} K$.
Let $\dagger$ denote the antiautomorphism of $\Matd$ that sends $A \mapsto K A^{\sf t} K^{-1}$.
We show that $\dagger$ fixes each of $E_i$ and $E^*_i$ for $0 \leq i \leq d$.
Using Lemma \ref{lem:diag} we find that $\dagger$ fixes $E_i$ for $0 \leq i \leq d$.
Observe that $\dagger$ sends $E^*_i$ to
$K (R^{-1})^{\sf t} \Delta_{i,i} R^{\sf t} K^{-1}$ for $0 \leq i \leq d$.
Using $R^{\sf t} K^{-1} = H R^{-1}$ and Lemma \ref{lem:diag}, 
\begin{align*}
 K (R^{-1})^{\sf t} \Delta_{i,i} R^{\sf t} K^{-1} &= R \Delta_{i,i} R^{-1}  && (0 \leq i \leq d).
\end{align*}
By these comments $\dagger$ fixes $E^*_i$ for $0 \leq i \leq d$.
We have shown that $\dagger$ fixes each of $E_i$ and $E^*_i$ for $0 \leq i \leq d$.
Therefore $\Phi_R$ is symmetric.

(ii) $\Rightarrow$ (i)
By Definition \ref{def:sym} there exists an antiautomorphism $\dagger$ of $\Matd$
that fixes each of $E_i$ and $E^*_i$ for $0 \leq i \leq d$.
By our comments in Section \ref{sec:pre} there exists an invertible $K \in \Matd$
such that $A^\dagger = K A^{\sf t} K^{-1}$ for $A \in \Matd$.
The matrix $K$ is diagonal by Lemma \ref{lem:diag} and since
$\dagger$ fixes $E_i$ for $0 \leq i \leq d$.
Observe that $\dagger$ sends $E^*_i$ to 
$K (R^{-1})^{\sf t} \Delta_{i,i} R^{\sf t} K^{-1}$ for $0 \leq i \leq d$.
By this and since $\dagger$ fixes $E^*_i$,
\begin{align*}
  R \Delta_{i,i} R^{-1} &= K (R^{-1})^{\sf t} \Delta_{i,i} R^{\sf t} K^{-1}    && (0 \leq i \leq d).
\end{align*}
Thus $R^{\sf t} K^{-1} R$ commutes with $\Delta_{i,i}$ for $0 \leq i \leq d$.
By this and Lemma \ref{lem:diag}, $R^{\sf t} K^{-1} R$ is diagonal;
denote this diagonal matrix by $H$.
Then $R^{\sf t} = H R^{-1} K$.
So $R^{\sf t}$ is diagonally equivalent to $R^{-1}$.

Suppose (i), (ii) hold.
In the proof of (i) $\Rightarrow$ (ii),
we have shown the last assertion of the proposition statement.
\end{proof}

In view of Proposition \ref{prop:canonical3}, we make a definition.

\begin{defi}    \label{def:feasible}    \samepage
\ifDRAFT {\rm def:feasible}. \fi
A matrix $R \in \Matd$ is said to be {\em almost orthogonal (AO)}
whenever $R$ is invertible and $R^{\sf t}$ is diagonally equivalent to $R^{-1}$.
\end{defi}

\begin{lemma}    \label{lem:genorth3}   \samepage
\ifDRAFT {\rm lem:genorth3}. \fi
Let $R$ denote an AO matrix in $\Matd$.
Then every matrix in $\Matd$ that is diagonally equivalent to $R$ is AO.
\end{lemma}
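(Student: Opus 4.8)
The plan is to verify the definition of AO directly for an arbitrary matrix $S$ diagonally equivalent to $R$. So I would start by writing $S = H R K$ for invertible diagonal matrices $H, K \in \Matd$. Since $R$ is AO, it is invertible, and by Lemma \ref{lem:edgeequiv} (or rather by $\eqref{eq:HRK}$ applied to the invertible case) $S$ is invertible as well; in fact $S^{-1} = K^{-1} R^{-1} H^{-1}$. It remains to show that $S^{\sf t}$ is diagonally equivalent to $S^{-1}$.

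Next I would compute $S^{\sf t} = (HRK)^{\sf t} = K^{\sf t} R^{\sf t} H^{\sf t} = K R^{\sf t} H$, using that the transpose of a diagonal matrix is itself. Now invoke the hypothesis that $R$ is AO: there exist invertible diagonal matrices $H', K' \in \Matd$ with $R^{\sf t} = H' R^{-1} K'$. Substituting, $S^{\sf t} = K H' R^{-1} K' H$. Since products of invertible diagonal matrices are invertible diagonal, set $H'' = K H'$ and $K'' = K' H$; these are invertible diagonal. Then $S^{\sf t} = H'' R^{-1} K''$. Finally, rewrite $R^{-1}$ in terms of $S^{-1}$: from $S^{-1} = K^{-1} R^{-1} H^{-1}$ we get $R^{-1} = K S^{-1} H$, so $S^{\sf t} = H'' K S^{-1} H K''$. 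Setting $\widetilde{H} = H'' K$ and $\widetilde{K} = H K''$, both invertible diagonal, we obtain $S^{\sf t} = \widetilde{H} S^{-1} \widetilde{K}$, which is exactly the statement that $S^{\sf t}$ is diagonally equivalent to $S^{-1}$. Hence $S$ is AO.

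There is no real obstacle here; the proof is a bookkeeping exercise in the fact that the invertible diagonal matrices form a group under multiplication, so all the accumulated diagonal factors can be absorbed. The only point requiring a moment's care is the identity $(HRK)^{\sf t} = K R^{\sf t} H$ — the order reverses under transposition, and the diagonal factors lose their transpose decoration since $H^{\sf t} = H$ and $K^{\sf t} = K$. Everything else is substitution and relabeling of diagonal matrices. One could alternatively phrase this more slickly: diagonal equivalence is the orbit relation of the group $\mathcal{D}^\times \times \mathcal{D}^\times$ acting by $(H,K)\cdot R = HRK$, and both $R \mapsto R^{\sf t}$ and $R \mapsto R^{-1}$ are anti-automorphisms of this action in a suitable sense, so they carry orbits to orbits compatibly; but for a short lemma the explicit computation above is cleaner to present.

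\begin{proof}
Let $S$ denote a matrix in $\Matd$ that is diagonally equivalent to $R$, and write $S = H R K$ for some invertible diagonal matrices $H$, $K$ in $\Matd$. Since $R$ is invertible, so is $S$, with $S^{-1} = K^{-1} R^{-1} H^{-1}$; equivalently $R^{-1} = K S^{-1} H$. Using $H^{\sf t} = H$ and $K^{\sf t} = K$ we compute $S^{\sf t} = (H R K)^{\sf t} = K R^{\sf t} H$. Since $R$ is AO, there exist invertible diagonal matrices $H'$, $K'$ in $\Matd$ with $R^{\sf t} = H' R^{-1} K'$. Substituting,
\[
  S^{\sf t} = K R^{\sf t} H = K H' R^{-1} K' H = K H' K S^{-1} H K' H.
\]
The matrices $\widetilde{H} = K H' K$ and $\widetilde{K} = H K' H$ are invertible diagonal, being products of invertible diagonal matrices, and $S^{\sf t} = \widetilde{H} S^{-1} \widetilde{K}$. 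Therefore $S^{\sf t}$ is diagonally equivalent to $S^{-1}$, so $S$ is AO.
\end{proof}
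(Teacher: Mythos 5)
Your proof is correct and follows essentially the same route as the paper: write $S = HRK$, transpose, substitute $R^{\sf t} = H'R^{-1}K'$ and $R^{-1} = KS^{-1}H$, and absorb the diagonal factors. The paper merely compresses this to the single identity $S^{\sf t} = K_2^2 H_1 S^{-1} K_1 H_2^2$ (your $\widetilde{H}=KH'K$, $\widetilde{K}=HK'H$ after commuting diagonal matrices), so there is nothing to change.
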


\begin{proof}
By Definition \ref{def:feasible} there exist invertible diagonal matrices $H_1$, $K_1$
in $\Matd$ such that $R^{\sf t} = H_1 R^{-1} K_1$.
Let $S$ denote a matrix in $\Matd$ that is diagonally equivalent to $R$.
Then there exist invertible diagonal matrices $H_2$, $K_2$ in $\Matd$ such that
$S = H_2 R K_2$.
One routinely finds that $S^{\sf t} = K_2^2 H_1 S^{-1} K_1 H_2^2$.
Therefore $S^{\sf t}$ is diagonally equivalent to $S^{-1}$.
\end{proof}

In the next result we classify up to isomorphism the symmetric idempotent systems.

\begin{theorem}    \label{thm:main5}    \samepage
\ifDRAFT {\rm thm:main5}. \fi
Consider the following sets:
\begin{itemize}
\item[\rm (i)]
the diagonal equivalence classes of AO solid invertible matrices in $\Matd$;
\item[\rm (ii)]
the isomorphism classes of  symmetric idempotent systems over $\F$ with diameter $d$.
\end{itemize}
The map $\rho \mapsto \Phi_\rho$ is a bijection from {\rm (i)} to {\rm (ii)}.
\end{theorem}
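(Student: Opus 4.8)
The plan is to reduce Theorem \ref{thm:main5} to the already-established Theorem \ref{thm:main1} together with the characterization of symmetry in Proposition \ref{prop:canonical3}. The map $\rho \mapsto \Phi_\rho$ is, by Theorem \ref{thm:main1}, a bijection from the diagonal equivalence classes of solid invertible matrices onto the isomorphism classes of idempotent systems over $\F$ with diameter $d$. So it suffices to check that this bijection restricts correctly: that a diagonal equivalence class $\rho$ of solid invertible matrices consists of AO matrices if and only if the isomorphism class $\Phi_\rho$ consists of symmetric idempotent systems.

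First I would verify that the notion ``$\rho$ consists of AO matrices'' is well posed: by Lemma \ref{lem:genorth3}, if one matrix in a diagonal equivalence class is AO then every matrix in that class is AO, so a diagonal equivalence class of solid invertible matrices is either entirely AO or contains no AO matrix. Hence the AO solid invertible matrices are a union of diagonal equivalence classes, and ``the diagonal equivalence classes of AO solid invertible matrices'' in (i) is exactly the subset of the domain of the Theorem \ref{thm:main1} bijection consisting of those $\rho$ all of whose members are AO. Second, I would observe that symmetry is an isomorphism invariant of idempotent systems: if $\Phi$ is symmetric, witnessed by an antiautomorphism $\dagger$ of $\A$, and $\sigma : \A \to \A'$ is an isomorphism of idempotent systems, then $\sigma^{-1} \dagger \sigma$ is an antiautomorphism of $\A'$ fixing each $E_i'$ and $E_i^{*\prime}$, so $\Phi'$ is symmetric. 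Thus ``$\Phi_\rho$ consists of symmetric idempotent systems'' is equivalent to ``some representative $\Phi_R$ with $R \in \rho$ is symmetric,'' and by Proposition \ref{prop:canonical3} this holds if and only if $R^{\sf t}$ is diagonally equivalent to $R^{-1}$, i.e. $R$ is AO.

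Combining these two points: for a diagonal equivalence class $\rho$ of solid invertible matrices, $\rho$ consists of AO matrices $\iff$ $R$ is AO for some (equivalently every) $R \in \rho$ $\iff$ $\Phi_R$ is symmetric for some $R \in \rho$ $\iff$ $\Phi_\rho$ is an isomorphism class of symmetric idempotent systems. Therefore the Theorem \ref{thm:main1} bijection carries the set in (i) bijectively onto the set in (ii), which is precisely the assertion of Theorem \ref{thm:main5}. I would close by noting that the target really is all of (ii): every symmetric idempotent system $\Phi$ over $\F$ with diameter $d$ is isomorphic to some $\Phi_R$ with $R$ solid invertible by Proposition \ref{prop:canonical0}, and $R$ must then be AO by Proposition \ref{prop:canonical3} applied in the other direction, so $[\Phi] = \Phi_\rho$ for $\rho$ the (AO) diagonal equivalence class of $R$.

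The only mild subtlety — and the one place worth spelling out rather than waving at — is the interchange of quantifiers ``some $R \in \rho$'' versus ``every $R \in \rho$'' in the symmetry condition, which is exactly why Lemma \ref{lem:genorth3} is needed; everything else is a bookkeeping restriction of an existing bijection and should be short.
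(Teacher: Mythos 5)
Your proposal is correct and follows exactly the route the paper takes: its proof of Theorem \ref{thm:main5} is a one-line citation of Theorem \ref{thm:main1}, Proposition \ref{prop:canonical3}, and Lemma \ref{lem:genorth3}, which are precisely the three ingredients you assemble. Your write-up simply makes explicit the details the paper leaves implicit (that AO-ness is constant on diagonal equivalence classes and that symmetry is an isomorphism invariant), both of which you handle correctly.
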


\begin{proof}
By Theorem \ref{thm:main1}, Proposition \ref{prop:canonical3}, and Lemma \ref{lem:genorth3}.
\end{proof}

\section{Normalized solid invertible matrices}
\label{sec:normalized}

In Section \ref{sec:solid},
we classified the idempotent systems up to isomorphism.
We showed that the isomorphism classes are in bijection with the diagonal equivalence
classes of solid invertible matrices.
In this section we introduce a type of solid invertible matrix,
said to be normalized.
We show that each diagonal equivalence class of solid invertible matrices
contains a unique normalized element.

\begin{defi}    \label{def:normalized}    \samepage
\ifDRAFT {\rm def:normalized}.  \fi
A solid invertible matrix $R \in \Matd$ is said to be {\em normalized}
whenever the following {\rm (i), (ii)} hold:
\begin{itemize}
\item[\rm (i)]
in column $0$ of $R$ all entries are equal to $1$;
\item[\rm (ii)]
in column $0$ of $R^{-1}$ all entries are the same.
\end{itemize}
\end{defi}

\begin{lemma}    \label{lem:normalized}    \samepage
\ifDRAFT {\rm lem:normalized}. \fi
Let $R$ denote a solid invertible matrix in $\Matd$ and let
$H$, $K$ denote invertible diagonal matrices in $\Matd$,
Then $H R K$ is normalized if and only if
\begin{align}
   H_{r,r} &= \frac{ 1 } { R_{r,0} K_{0,0} },    &
  K_{r,r} &= \frac{ (R^{-1})_{r,0} K_{0,0} } { (R^{-1})_{0,0} }  &&  (0 \leq r \leq d).     \label{eq:Kii}
\end{align}
\end{lemma}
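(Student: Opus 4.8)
The plan is to compute the first column of $HRK$ and of $(HRK)^{-1}$ using the entry formulas \eqref{eq:HRK}, and then to match these against the two defining conditions of Definition \ref{def:normalized}. First I would set $s=0$ in the left formula of \eqref{eq:HRK} to get $(HRK)_{r,0} = R_{r,0} H_{r,r} K_{0,0}$ for $0\leq r\leq d$. Condition (i) of Definition \ref{def:normalized} says this equals $1$ for all $r$; since $R$ is solid, $R_{r,0}\neq 0$, and $K_{0,0}\neq 0$ because $K$ is invertible, so this holds if and only if $H_{r,r} = 1/(R_{r,0}K_{0,0})$, which is the first equation in \eqref{eq:Kii}.

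Next I would set $s=0$ in the right formula of \eqref{eq:HRK} to get $\big((HRK)^{-1}\big)_{r,0} = (R^{-1})_{r,0}/(H_{0,0}K_{r,r})$ for $0\leq r\leq d$. Condition (ii) of Definition \ref{def:normalized} says these entries are all equal to a common value; evaluating at $r=0$ gives that common value as $(R^{-1})_{0,0}/(H_{0,0}K_{0,0})$. So condition (ii) becomes $(R^{-1})_{r,0}/(H_{0,0}K_{r,r}) = (R^{-1})_{0,0}/(H_{0,0}K_{0,0})$ for all $r$; cancelling the nonzero factor $H_{0,0}$ and solving for $K_{r,r}$ (using $(R^{-1})_{0,0}\neq 0$ by solidity) yields $K_{r,r} = (R^{-1})_{r,0}K_{0,0}/(R^{-1})_{0,0}$, the second equation in \eqref{eq:Kii}. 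For the converse direction, I would simply substitute the two expressions from \eqref{eq:Kii} back into the formulas for $(HRK)_{r,0}$ and $\big((HRK)^{-1}\big)_{r,0}$ and check that (i) and (ii) of Definition \ref{def:normalized} are satisfied — the first column of $HRK$ becomes all $1$'s, and the first column of $(HRK)^{-1}$ becomes the constant $1/(H_{0,0}K_{0,0})$.

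One small point to handle carefully: to invoke Definition \ref{def:normalized} at all, one needs $HRK$ to be solid invertible, but this is immediate from Lemma \ref{lem:edgeequiv} since $HRK$ is diagonally equivalent to $R$. I expect no real obstacle here; the only thing requiring attention is the bookkeeping that every denominator appearing ($R_{r,0}$, $(R^{-1})_{0,0}$, $H_{0,0}$, $K_{0,0}$, $K_{r,r}$) is nonzero, which follows from solidity of $R$ together with invertibility of the diagonal matrices $H$ and $K$. The argument is a direct equivalence chain in both directions, so I would present it as: ``$HRK$ is normalized iff its first column is all $1$'s and the first column of its inverse is constant iff (by \eqref{eq:HRK}) the displayed relations \eqref{eq:Kii} hold.''
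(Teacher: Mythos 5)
Your proposal is correct and follows exactly the paper's route: the paper's proof is the one-line instruction ``Use \eqref{eq:HRK} with $s=0$, along with Definition \ref{def:normalized},'' and your argument is precisely that computation carried out in detail, with the (correct) extra observations that $HRK$ is solid invertible by Lemma \ref{lem:edgeequiv} and that all denominators are nonzero.
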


\begin{proof}
Use \eqref{eq:HRK} with $s=0$, along with Definition \ref{def:normalized}.
\end{proof}

\begin{prop}    \label{prop:normalized}    \samepage
\ifDRAFT {\rm prop:normalized}. \fi
Each diagonal equivalence class of solid invertible matrices in $\Matd$
contains a unique normalized element.
\end{prop}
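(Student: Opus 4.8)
The plan is to prove both existence and uniqueness of a normalized element in each diagonal equivalence class, using Lemma \ref{lem:normalized} as the key tool.

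\emph{Existence.} Fix a solid invertible $R \in \Matd$ and let $\rho$ be its diagonal equivalence class. By Lemma \ref{lem:edgeequiv}, every element of $\rho$ is solid invertible, so in particular every entry appearing in the denominators of \eqref{eq:Kii} is nonzero. I would simply \emph{construct} the required scaling: pick any nonzero $K_{0,0} \in \F$ (say $K_{0,0}=1$), then define diagonal matrices $H$, $K$ by the formulas in \eqref{eq:Kii}. Since $R$ is solid, $R_{r,0} \neq 0$ and $(R^{-1})_{r,0} \neq 0$ for all $r$, so $H_{r,r}$ and $K_{r,r}$ are well-defined nonzero scalars, hence $H$, $K$ are invertible diagonal matrices. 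By Lemma \ref{lem:normalized}, the matrix $HRK$ is normalized, and it lies in $\rho$ by Definition \ref{def:diagequiv}. This gives existence.

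\emph{Uniqueness.} Suppose $S$ and $S'$ are both normalized elements of $\rho$. Then $S' = H S K$ for some invertible diagonal $H$, $K$, and since $S' = HS'K'$ with $H=K=I$ is the trivial representation, I can instead argue directly: writing $S' = HSK$ and applying Lemma \ref{lem:normalized} to the solid invertible matrix $S$ (with the roles of $R$, $HRK$ there played by $S$, $S'$), the fact that $S'=HSK$ is normalized forces $H$ and $K$ to be given by \eqref{eq:Kii} with $R$ replaced by $S$, for the particular value $c := K_{0,0}$. But $S$ itself is already normalized, so column $0$ of $S$ is all $1$'s, i.e. $S_{r,0}=1$, giving $H_{r,r} = 1/c$ for all $r$; and column $0$ of $S^{-1}$ is constant, so $(S^{-1})_{r,0}/(S^{-1})_{0,0} = 1$, giving $K_{r,r} = c$ for all $r$. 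Thus $H = c^{-1}I$ and $K = cI$, whence $S' = (c^{-1}I)\,S\,(cI) = S$. This proves uniqueness.

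The only mildly delicate point is bookkeeping: Lemma \ref{lem:normalized} characterizes when $HRK$ is normalized in terms of the entries of $R$, and in the uniqueness argument I must apply it with $R:=S$ (already normalized) rather than with the original $R$, being careful that the free parameter is exactly the scalar $K_{0,0}$. Once that is set up correctly, both directions are short substitutions into \eqref{eq:Kii}. I expect no real obstacle beyond this indexing care.
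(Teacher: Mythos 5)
Your proposal is correct and follows essentially the same route as the paper: existence by constructing $H$, $K$ from \eqref{eq:Kii} (well-defined because $R$ is solid), and uniqueness by invoking Lemma \ref{lem:normalized} to see that the scaling is determined up to a single scalar that cancels in the product. The only cosmetic difference is that you apply Lemma \ref{lem:normalized} with the base matrix taken to be one of the two normalized elements rather than the original $R$, which works equally well.
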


\begin{proof}
Let $R$ denote a solid invertible matrix in $\Matd$.
We show that there exists a unique normalized solid invertible matrix in $\Matd$ that is diagonally equivalent to $R$.
Observe that there exist invertible diagonal matrices $H$, $K$ in $\Matd$ that satisfy \eqref{eq:Kii}.
Then $H R K$ is a normalized solid invertible matrix in $\Matd$ that is diagonally equivalent to $R$.
Concerning uniqueness,
let $H'$, $K'$ denote invertible diagonal matrices in $\Matd$ such that $H' R K'$ is normalized.
By Lemma \ref{lem:normalized} there exists a nonzero $\alpha \in \F$ such that
$H' = \alpha H$ and $K' = \alpha^{-1} K$.
Therefore $H R K = H' R K'$.
\end{proof}

\begin{defi}     \label{def:[Phi]}     \samepage
\ifDRAFT {\rm def:[Phi]}. \fi
For an idempotent system $\Phi$ over $\F$ with diameter $d$,
let $[\Phi]$ denote the isomorphism class that contains $\Phi$.
\end{defi}

\begin{corollary}      \label{cor:main1}    \samepage
\ifDRAFT {\rm cor:main1}. \fi
Consider the following sets:
\begin{itemize}
\item[\rm (i)] the normalized solid invertible matrices in $\Matd$;
\item[\rm (ii)]
the isomorphism classes of idempotent systems over $\F$ with diameter $d$.
\end{itemize}
The map $R \mapsto [\Phi_R]$ is a bijection from {\rm (i)} to {\rm (ii)}.
\end{corollary}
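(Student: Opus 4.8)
The plan is to combine Theorem \ref{thm:main1} with Proposition \ref{prop:normalized}. By Theorem \ref{thm:main1} the map $\rho \mapsto \Phi_\rho$ is a bijection from the set of diagonal equivalence classes of solid invertible matrices in $\Matd$ onto the set of isomorphism classes of idempotent systems over $\F$ with diameter $d$; recall from Definition \ref{def:Phirho} that $\Phi_\rho = [\Phi_R]$ for any $R \in \rho$. By Proposition \ref{prop:normalized}, each diagonal equivalence class $\rho$ of solid invertible matrices contains exactly one normalized solid invertible matrix; denote it $n(\rho)$. Thus $\rho \mapsto n(\rho)$ is a bijection from set (i) of Theorem \ref{thm:main1} onto set (i) of the present corollary, with inverse sending a normalized solid invertible matrix $R$ to its diagonal equivalence class $[R]_{\mathrm{diag}}$.

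Next I would compose these two bijections. For a normalized solid invertible matrix $R$, its diagonal equivalence class $\rho = [R]_{\mathrm{diag}}$ satisfies $n(\rho) = R$ (by the uniqueness in Proposition \ref{prop:normalized}), and $\Phi_\rho = [\Phi_R]$ by Definition \ref{def:Phirho}. Hence the map $R \mapsto [\Phi_R]$ of the corollary equals the composite of the bijection $R \mapsto [R]_{\mathrm{diag}}$ (from normalized solid invertible matrices to diagonal equivalence classes) followed by the bijection $\rho \mapsto \Phi_\rho$ of Theorem \ref{thm:main1}. A composite of bijections is a bijection, so $R \mapsto [\Phi_R]$ is a bijection from (i) to (ii), as desired.

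There is no real obstacle here; the only thing to be careful about is the bookkeeping between the equivalence class $\rho$ and its unique normalized representative, i.e.\ making sure that $\Phi_\rho$ is literally the class $[\Phi_R]$ when $R$ is the normalized element of $\rho$. This is immediate from Definitions \ref{def:Phirho} and \ref{def:[Phi]} together with Proposition \ref{prop:RS}, which guarantees that $[\Phi_R]$ does not depend on the choice of $R$ within $\rho$. So the proof is essentially a one-line assembly of the two previously established results; I would state it in that spirit rather than re-proving surjectivity and injectivity from scratch.

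\begin{proof}
By Proposition \ref{prop:normalized}, each diagonal equivalence class of solid invertible matrices in $\Matd$ contains a unique normalized element. Hence the map sending a normalized solid invertible matrix $R$ to its diagonal equivalence class is a bijection from set (i) onto the set of diagonal equivalence classes of solid invertible matrices in $\Matd$. By Theorem \ref{thm:main1} the map $\rho \mapsto \Phi_\rho$ is a bijection from the set of diagonal equivalence classes of solid invertible matrices onto set (ii). For a normalized solid invertible matrix $R$ with diagonal equivalence class $\rho$, we have $\Phi_\rho = [\Phi_R]$ by Definition \ref{def:Phirho}. Therefore the map $R \mapsto [\Phi_R]$ is the composite of the two bijections just described, and hence is itself a bijection from (i) to (ii).
\end{proof}
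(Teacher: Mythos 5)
Your proof is correct and takes exactly the same route as the paper, which simply cites Theorem \ref{thm:main1} and Proposition \ref{prop:normalized}; you have merely spelled out the composition of bijections that those two results provide. No issues.
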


\begin{proof}
By Theorem \ref{thm:main1} and Proposition \ref{prop:normalized}.
\end{proof}

\begin{defi}    \label{def:AONSIPS}    \samepage
\ifDRAFT {\rm def:AONSIPS}. \fi
Let $\text{\rm AON}_d(\F)$ denote the set consisting of
the AO normalized solid invertible matrices in $\Matd$.
Let $\text{\rm SIS}_d(\F)$ denote the set consisting of
the isomorphism classes of symmetric idempotent systems over $\F$ with 
diameter $d$.
\end{defi}

\begin{corollary}    \label{cor:main2} \samepage
\ifDRAFT {\rm cor:main2}. \fi
The map $\text{\rm AON}_d(\F) \to \text{\rm SIS}_d(\F)$, $R \mapsto [\Phi_R]$ is a bijection.
\end{corollary}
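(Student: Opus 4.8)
The plan is to combine the two bijections established just before this corollary. By Corollary \ref{cor:main1}, the map $R \mapsto [\Phi_R]$ restricts to a well-defined map on the normalized solid invertible matrices, and by Theorem \ref{thm:main5} (together with Proposition \ref{prop:normalized}, which tells us each diagonal equivalence class has a unique normalized representative), the relevant matrices on the symmetric side are exactly the AO ones. So the essential content is bookkeeping: show that the restriction of the bijection from Corollary \ref{cor:main1} to the subset $\text{\rm AON}_d(\F) \subseteq \{\text{normalized solid invertible matrices}\}$ has image exactly $\text{\rm SIS}_d(\F)$.

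First I would invoke Corollary \ref{cor:main1}: the map $R \mapsto [\Phi_R]$ is a bijection from the set of normalized solid invertible matrices onto the set of isomorphism classes of idempotent systems over $\F$ with diameter $d$. In particular it is injective, so its restriction to $\text{\rm AON}_d(\F)$ is automatically injective; only surjectivity onto $\text{\rm SIS}_d(\F)$ needs argument. Next I would pin down which normalized solid invertible matrices $R$ satisfy $[\Phi_R]\in\text{\rm SIS}_d(\F)$, i.e.\ for which $R$ the idempotent system $\Phi_R$ is symmetric. By Proposition \ref{prop:canonical3}, $\Phi_R$ is symmetric precisely when $R^{\sf t}$ is diagonally equivalent to $R^{-1}$, i.e.\ precisely when $R$ is AO in the sense of Definition \ref{def:feasible}. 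Hence $[\Phi_R]\in\text{\rm SIS}_d(\F)$ if and only if $R\in\text{\rm AON}_d(\F)$. This shows the restricted map indeed takes values in $\text{\rm SIS}_d(\F)$.

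For surjectivity onto $\text{\rm SIS}_d(\F)$, take any $[\Phi]\in\text{\rm SIS}_d(\F)$. By Corollary \ref{cor:main1} there is a normalized solid invertible $R\in\Matd$ with $[\Phi_R]=[\Phi]$. Since $\Phi$ is symmetric and symmetry is an isomorphism invariant, $\Phi_R$ is symmetric, so by Proposition \ref{prop:canonical3} the matrix $R$ is AO, hence $R\in\text{\rm AON}_d(\F)$. Thus $[\Phi]$ is in the image, and the restricted map $\text{\rm AON}_d(\F)\to\text{\rm SIS}_d(\F)$ is onto. The one point deserving care — and the only place the argument could go wrong — is the claim that symmetry of an idempotent system is preserved under isomorphism: if $\sigma:\A\to\A'$ is an isomorphism of idempotent systems and $\dagger$ is the antiautomorphism of $\A$ fixing the idempotents of $\Phi$, then $\sigma^{-1}\dagger\sigma$ (suitably composed) is an antiautomorphism of $\A'$ fixing the idempotents of $\Phi'$; this is routine from the definitions in Section \ref{sec:ips} and Definition \ref{def:sym}. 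Combining injectivity from Corollary \ref{cor:main1} with the surjectivity just shown gives the desired bijection.
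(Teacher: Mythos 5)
Your proof is correct and takes essentially the same route as the paper, which simply cites Theorem \ref{thm:main5} together with Proposition \ref{prop:normalized}; you reach the same conclusion by restricting Corollary \ref{cor:main1} and invoking Proposition \ref{prop:canonical3} directly. The isomorphism-invariance of symmetry that you verify explicitly is left implicit in the paper (it is absorbed into the proof of Theorem \ref{thm:main5} via Propositions \ref{prop:canonical0} and \ref{prop:canonical3}), and your sketch of the conjugated antiautomorphism $\sigma^{-1}\dagger\sigma$ is the right way to fill that in.
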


\begin{proof}
By Theorem \ref{thm:main5} and Proposition \ref{prop:normalized}.
\end{proof}

In the next section we will consider the inverse of the bijection 
in Corollary \ref{cor:main2}.

\section{The inverse of the bijection in Corollary \ref{cor:main2} }
\label{sec:known}

In this section we describe the inverse of the bijection in Corollary \ref{cor:main2}.
Let  $\Phi = (\{E_i\}_{i=0}^d; \{E^*_i\}_{i=0}^d)$ denote a symmetric idempotent system in $\A$,
and let the antiautomorphism $\dagger$ of $\A$ be from Definition \ref{def:sym}.
Let the  algebra $\mathcal M$ be from Definition \ref{def:M}.

\begin{defi}   {\rm (See \cite[Definition 4.1]{NT:ips}.) }
\label{def:mi}  \samepage
\ifDRAFT {\rm def:mi}. \fi
For $0 \leq i \leq d$ define
\begin{equation}
  m_i = \text{\rm tr} (E^*_0 E_i),               \label{eq:defmi}
\end{equation}
where {\rm tr} means trace.
\end{defi}

\begin{lemma}   {\rm (See \cite[Lemma 4.4]{NT:ips}.) }
\label{lem:mi}   \samepage
The following hold:
\begin{itemize}
\item[\rm (i)]
$m_i \neq 0 \quad (0 \leq i \leq d)$;
\item[\rm (ii)]
$\sum_{i=0}^d m_i = 1$.
\end{itemize}
\end{lemma}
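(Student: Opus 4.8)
The plan is to reduce the claim to a short trace computation, using that both $m_i$ and the defining conditions of an idempotent system are invariant under algebra isomorphism. First I would recall that the trace functional is well defined on the abstract algebra $\A$: fix any algebra isomorphism $\A \to \Matd$ and pull back the ordinary matrix trace; this is independent of the chosen isomorphism, since by Skolem--Noether any two such isomorphisms differ by an inner automorphism and conjugation preserves the matrix trace. In particular, for any rank $1$ idempotent $E$ in $\A$ we have $\text{\rm tr}(E) = 1$, because $E$ is sent to a rank $1$ idempotent in $\Matd$, which is conjugate to $\Delta_{0,0}$, and $\text{\rm tr}(\Delta_{0,0}) = 1$.

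For part (ii) I would simply write
\[
  \sum_{i=0}^d m_i = \sum_{i=0}^d \text{\rm tr}(E^*_0 E_i)
   = \text{\rm tr}\Bigl(E^*_0 \sum_{i=0}^d E_i\Bigr)
   = \text{\rm tr}(E^*_0 I) = \text{\rm tr}(E^*_0) = 1,
\]
using linearity of the trace, the identity $I = \sum_{i=0}^d E_i$ from Lemma \ref{lem:Ei}, and the remark above.

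For part (i), the key observation is that the corner $E^*_0 \A E^*_0$ is one-dimensional, spanned by $E^*_0$, because $E^*_0$ has rank $1$ (this transfers from the matrix model, where $E\Matd E = \F E$ for a rank $1$ idempotent $E$). Hence for each $i$ there is a scalar $\lambda_i \in \F$ with $E^*_0 E_i E^*_0 = \lambda_i E^*_0$. Taking traces and using the cyclic property of the trace together with $E^*_0 E^*_0 = E^*_0$ gives
\[
  \lambda_i = \lambda_i \text{\rm tr}(E^*_0) = \text{\rm tr}(E^*_0 E_i E^*_0) = \text{\rm tr}(E^*_0 E^*_0 E_i) = \text{\rm tr}(E^*_0 E_i) = m_i.
\]
By Definition \ref{def:ips}(iv) we have $E^*_0 E_i E^*_0 \neq 0$, so $\lambda_i \neq 0$, and therefore $m_i \neq 0$.

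The arguments are all short, so I do not expect a serious obstacle; the only point needing a little care is the well-definedness of the trace on $\A$ and the fact that rank $1$ idempotents have trace $1$ (note also that symmetry of $\Phi$ plays no role here). As an alternative to the abstract argument one could invoke Proposition \ref{prop:canonical0} to assume $\Phi = \Phi_R$ for a solid invertible $R \in \Matd$, compute $m_i = R_{i,0}(R^{-1})_{0,i}$ directly as in the proof of Lemma \ref{lem:PhiRpre}, and then deduce (i) from solidity of $R$ and (ii) from $\sum_{i=0}^d R_{i,0}(R^{-1})_{0,i} = (R^{-1}R)_{0,0} = 1$.
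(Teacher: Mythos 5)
Your proof is correct. Note that the paper itself gives no proof of this lemma; it is quoted verbatim from the companion paper (\cite[Lemma 4.4]{NT:ips}), so there is no in-paper argument to compare against. On its own merits your argument is complete: the trace is indeed well defined on $\A$ (any two isomorphisms $\A \to \Matd$ differ by an inner automorphism, by Skolem--Noether), a rank $1$ idempotent has trace $1$ over any field (it is conjugate to $\Delta_{0,0}$), part (ii) follows from linearity and $I = \sum_{i=0}^d E_i$, and part (i) correctly combines the one-dimensionality of the corner $E^*_0 \A E^*_0$ with the cyclic property of the trace and the nonvanishing condition Definition \ref{def:ips}(iv). Your alternative computational route is also sound and meshes with the machinery the paper does develop: reducing to $\Phi_R$ via Proposition \ref{prop:canonical0} gives $m_i = R_{i,0}(R^{-1})_{0,i}$, which is nonzero by solidity and sums to $(R^{-1}R)_{0,0} = 1$; this is exactly the kind of entrywise computation appearing in Lemma \ref{lem:PhiRpre}. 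Either route would serve as a valid substitute for the omitted proof.
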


\begin{defi}   {\rm (See \cite[Definition 4.5]{NT:ips}.) }
\label{def:ISnu}   \samepage
\ifDRAFT {\rm def:ISnu}. \fi
Setting $i=0$ in \eqref{eq:defmi} we find that $m_0 = m^*_0$;
let $\nu$ denote the multiplicative inverse of this common value.
We call $\nu$ the {\em size of $\Phi$}.
We emphasize that $\nu = \nu^*$.
\end{defi}

\begin{lemma}   {\rm (See \cite[Lemmas 6.3, 7.4]{NT:ips}.) }
\label{lem:Ai}    \samepage
\ifDRAFT {\rm lem:Ai}. \fi
For $0 \leq i \leq d$  there exists a unique $A_i \in {\mathcal M}$ such that
\[
    A_i E^*_0 E_0 = E^*_i E_0.   
\]
\end{lemma}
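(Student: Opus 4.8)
The plan is to exploit the structure of $\mathcal{M}$ as a $(d+1)$-dimensional commutative algebra with basis $\{E_i\}_{i=0}^d$, and to show that right multiplication by $E^*_0 E_0$ is injective on $\mathcal{M}$; existence and uniqueness of $A_i$ then follow by a dimension count. First I would consider the $\F$-linear map $\mu : \mathcal{M} \to \A$ defined by $\mu(X) = X E^*_0 E_0$. Since $E^*_0$ has rank $1$ and $E_0$ has rank $1$, the element $E^*_0 E_0 = E^*_0 E_0$ is either zero or rank $1$; but by idempotent system axiom Definition \ref{def:ips}(iii) with $i=0$ we have $E_0 E^*_0 E_0 \neq 0$, so $E^*_0 E_0 \neq 0$ and hence has rank exactly $1$. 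The target of $\mu$ can be narrowed: $E^*_i E_0$ has image contained in the image of $E^*_i$, and one checks $E^*_i E_0 \neq 0$ for each $i$ (again rank considerations together with axiom (iii), since $E_0 E^*_i E_0 \neq 0$ forces $E^*_i E_0 \neq 0$).

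The key step is injectivity of $\mu$. Suppose $X = \sum_{i=0}^d c_i E_i \in \mathcal{M}$ satisfies $X E^*_0 E_0 = 0$. Left-multiplying by $E_0$ and using $E_0 E_i = \delta_{0,i} E_0$, we get $c_0 E_0 E^*_0 E_0 = 0$, and since $E_0 E^*_0 E_0 \neq 0$ this gives $c_0 = 0$. To extract the remaining coefficients I would use the antiautomorphism $\dagger$: applying $\dagger$ to $X E^*_0 E_0 = 0$ gives $E_0 E^*_0 X^\dagger = 0$, where $X^\dagger = \sum_i c_i E_i$ since $\dagger$ fixes each $E_i$ — so this alone only recovers $c_0$ again. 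Instead, the cleaner route is to conjugate the problem into $\Matd$. By Proposition \ref{prop:canonical0} (or Lemma \ref{lem:E}) we may assume $\A = \Matd$, $E_i = \Delta_{i,i}$, and $E^*_i = R\Delta_{i,i}R^{-1}$ for a solid invertible $R$. Then $E^*_0 E_0 = R\Delta_{0,0}R^{-1}\Delta_{0,0}$ has $(r,s)$-entry $R_{r,0}(R^{-1})_{0,0}\delta_{s,0}$, which is a nonzero multiple of the column vector $(R_{r,0})_r$ sitting in column $0$; and $X E^*_0 E_0 = \sum_i c_i \Delta_{i,i} \cdot (E^*_0 E_0)$ has $(r,0)$-entry $c_r R_{r,0}(R^{-1})_{0,0}$. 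Since $R$ is solid, $R_{r,0} \neq 0$ for all $r$ and $(R^{-1})_{0,0} \neq 0$, so $X E^*_0 E_0 = 0$ forces $c_r = 0$ for all $r$. Hence $\mu$ is injective.

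Since $\mathcal{M}$ has dimension $d+1$ and $\mu$ is injective, $\mu$ is a bijection from $\mathcal{M}$ onto its image, which is the $(d+1)$-dimensional subspace $\mathcal{M} E^*_0 E_0$ of $\Matd$ (spanned by the columns as above). It remains to observe that $E^*_i E_0$ lies in this image for each $i$: indeed $E^*_i E_0 = R\Delta_{i,i}R^{-1}\Delta_{0,0}$ has $(r,0)$-entry $R_{r,i}(R^{-1})_{i,0}$ and zero elsewhere, so it is the column vector $(R_{r,i})_r (R^{-1})_{i,0}$ in column $0$, which equals $\mu(X)$ for $X = \sum_r \frac{R_{r,i}(R^{-1})_{i,0}}{R_{r,0}(R^{-1})_{0,0}} E_r \in \mathcal{M}$. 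Therefore there is a unique $A_i \in \mathcal{M}$ with $A_i E^*_0 E_0 = E^*_i E_0$, as claimed. I expect the main obstacle to be purely presentational: one must decide whether to run the argument abstractly in $\A$ using $\dagger$ and trace pairings, or to transport to $\Matd$ and compute entries directly; the matrix computation is the most transparent, and the only subtlety is checking that all relevant entries of $R$ and $R^{-1}$ in row/column $0$ are nonzero, which is exactly the solidity hypothesis guaranteed by Proposition \ref{prop:canonical2}.
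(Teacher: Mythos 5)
Your proof is correct, but there is nothing in the paper to compare it against: Lemma \ref{lem:Ai} is imported from the companion paper \cite{NT:ips} (Lemmas 6.3, 7.4) with a citation and no proof, so you have in effect supplied a self-contained argument using only the tools of the present paper. The core of your argument is sound: after invoking Proposition \ref{prop:canonical0} to reduce to $\A = \Matd$ with $E_i = \Delta_{i,i}$ and $E^*_i = R\Delta_{i,i}R^{-1}$ for a solid invertible $R$, the equation $X E^*_0 E_0 = E^*_i E_0$ for $X = \sum_r c_r \Delta_{r,r} \in \mathcal M$ becomes the system $c_r R_{r,0}(R^{-1})_{0,0} = R_{r,i}(R^{-1})_{i,0}$ $(0 \le r \le d)$, which has a unique solution because solidity guarantees $R_{r,0} \neq 0$ and $(R^{-1})_{0,0} \neq 0$; this gives existence and uniqueness in one stroke, and the separate injectivity-plus-dimension-count framing is slightly more machinery than you need. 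Two small points of hygiene: you should state explicitly that the assertion of the lemma is invariant under isomorphism of idempotent systems (the isomorphism carries $\mathcal M$ onto the diagonal algebra and the defining equation onto the corresponding equation in $\Matd$), since that is what licenses ``we may assume''; and the exploratory material --- the rank discussion, the observation that $E^*_i E_0 \neq 0$, and the abandoned attempt via the antiautomorphism $\dagger$ --- is not used and should be cut. Note also that your proof nowhere uses that $\Phi$ is symmetric, which is consistent with \cite{NT:ips}, where the result is proved for arbitrary idempotent systems.
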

 
\begin{lemma}   {\rm (See \cite[Lemma 7.5]{NT:ips}.) }
\label{lem:A0}    \samepage
\ifDRAFT {\rm lem:A0}. \fi
We have $A_0 = I$.
\end{lemma}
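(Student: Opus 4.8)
The claim to prove is Lemma~\ref{lem:A0}, that $A_0 = I$.

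\medskip\noindent\textbf{Proof proposal.}
The plan is to appeal directly to the defining property of $A_0$ from Lemma~\ref{lem:Ai}, namely that $A_0$ is the unique element of $\mathcal M$ satisfying $A_0 E^*_0 E_0 = E^*_0 E_0$. Since $I \in \mathcal M$ (by Lemma~\ref{lem:Ei}, the identity $I = \sum_{i=0}^d E_i$ lies in $\mathcal M$), it suffices to check that $I$ satisfies this same equation, i.e. that $I \cdot E^*_0 E_0 = E^*_0 E_0$. But $I$ is the multiplicative identity of $\A$, so $I \cdot E^*_0 E_0 = E^*_0 E_0$ trivially. By the uniqueness clause in Lemma~\ref{lem:Ai} (with $i = 0$), we conclude $A_0 = I$.

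\medskip\noindent
The only point requiring a moment's care is the verification that $I$ belongs to the subalgebra $\mathcal M$, so that the uniqueness statement of Lemma~\ref{lem:Ai} — which asserts uniqueness \emph{within $\mathcal M$} — actually applies to $I$. This is immediate from Lemma~\ref{lem:Ei}. There is no substantial obstacle here; the argument is a one-line consequence of the uniqueness in Lemma~\ref{lem:Ai} together with $I \in \mathcal M$.
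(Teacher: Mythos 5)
Your proof is correct and is the natural argument: since $I\in\mathcal M$ and $I\,E^*_0E_0=E^*_0E_0$, the uniqueness clause of Lemma \ref{lem:Ai} with $i=0$ forces $A_0=I$. The paper itself gives no proof here (it defers to \cite[Lemma 7.5]{NT:ips}), and your reasoning matches the intended one, including the necessary check that $I$ lies in $\mathcal M$ so that uniqueness applies.
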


\begin{lemma}    {\rm (See \cite[Lemma 7.7]{NT:ips}.) }
\label{lem:Aibasis}   \samepage
\ifDRAFT {\rm lem:Aibasis}. \fi
The elements $\{A_i\}_{i=0}^d$ form a basis for the vector space $\mathcal M$.
\end{lemma}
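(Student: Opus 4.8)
The statement to prove is Lemma~\ref{lem:Aibasis}: the elements $\{A_i\}_{i=0}^d$ form a basis for $\mathcal{M}$. Since $\dim \mathcal{M} = d+1$ (as $\{E_i\}_{i=0}^d$ is a basis by Lemma~\ref{lem:Ei}), it suffices to show that $\{A_i\}_{i=0}^d$ is linearly independent. The plan is to exploit the defining relation $A_i E^*_0 E_0 = E^*_i E_0$ from Lemma~\ref{lem:Ai} and push any linear dependence through to a linear dependence among the $E^*_i$.

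First I would suppose $\sum_{i=0}^d \alpha_i A_i = 0$ for scalars $\alpha_i \in \F$. Multiplying on the right by $E^*_0 E_0$ and using the defining relation gives $\sum_{i=0}^d \alpha_i E^*_i E_0 = 0$. Now multiply this on the left by $E^*_j$ for a fixed $j$: since $E^*_j E^*_i = \delta_{i,j} E^*_i$, we get $\alpha_j E^*_j E_0 = 0$. The key point is then that $E^*_j E_0 \neq 0$: indeed $E_0 E^*_j E_0 \neq 0$ by Definition~\ref{def:ips}(iii), so in particular $E^*_j E_0 \neq 0$. Hence $\alpha_j = 0$ for each $j$, proving linear independence.

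Since $\{A_i\}_{i=0}^d$ consists of $d+1$ linearly independent elements of the $(d+1)$-dimensional space $\mathcal{M}$ (each $A_i$ lies in $\mathcal{M}$ by Lemma~\ref{lem:Ai}), it is a basis. I do not anticipate a serious obstacle here; the only point requiring a little care is the nonvanishing of $E^*_j E_0$, which as noted follows immediately from condition (iii) in the definition of an idempotent system. One could alternatively invoke the dual statement for the $A^*_i$ and symmetry, but the direct argument above is cleaner and self-contained.
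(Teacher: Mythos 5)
Your argument is correct. Note, though, that the present paper does not actually prove this lemma: it simply imports it from \cite[Lemma 7.7]{NT:ips}, so there is no in-paper proof to compare against. Your proof is a clean, self-contained verification: since $\dim \mathcal{M}=d+1$ by Lemma \ref{lem:Ei} and each $A_i$ lies in $\mathcal{M}$ by Lemma \ref{lem:Ai}, linear independence suffices; right-multiplying a dependence $\sum_i \alpha_i A_i=0$ by $E^*_0E_0$ converts it via the defining relation into $\sum_i \alpha_i E^*_iE_0=0$, and left-multiplying by $E^*_j$ isolates $\alpha_j E^*_jE_0=0$. The one point needing care, the nonvanishing of $E^*_jE_0$, follows exactly as you say from Definition \ref{def:ips}(iii), since $E^*_jE_0=0$ would force $E_0E^*_jE_0=0$. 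So each $\alpha_j=0$ and the $A_i$ form a basis. This is the natural argument and, as far as the logic of this paper is concerned, it fills in a step the authors chose to outsource.
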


By Lemma \ref{lem:Aibasis} there exist scalars $p^h_{i j}$ $(0 \leq h, i, j \leq d)$ in $\F$
such that
\begin{align}
  A_i A_j &= \sum_{h=0}^d p^h_{i j} A_h   &&   (0 \leq i,j \leq d).      \label{eq:ISphij}
\end{align}

\begin{defi}    \label{def:ISki}    \samepage
\ifDRAFT {\rm def:ISki}. \fi
For $0 \leq i \leq d$ define $k_i = \nu m^*_i$.
\end{defi}

\begin{lemma}    {\rm (See \cite[Lemma 8.4]{NT:ips}.) }
\label{lem:ISki}    \samepage
\ifDRAFT {\rm lem:ISki}. \fi
The following hold:
\begin{itemize}
\item[\rm (i)]
$k_i \neq 0 \quad (0 \leq i \leq d)$;
\item[\rm (ii)]
$\nu = \sum_{i=0}^d k_i$;
\item[\rm (iii)]
$k_0 = 1$.
\end{itemize}
\end{lemma}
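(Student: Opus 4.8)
The plan is to establish (i)--(iii) by tracing everything back to the numbers $m^*_i$ and the scaling constant $\nu$. Recall from Definition \ref{def:ISki} that $k_i = \nu m^*_i$, and recall that $m^*_i = \text{\rm tr}(E_0 E^*_i)$ is the dual of $m_i = \text{\rm tr}(E^*_0 E_i)$ from Definition \ref{def:mi}, while $\nu = \nu^*$ is the size of $\Phi$ from Definition \ref{def:ISnu}, so that $m^*_0 = \nu^{-1}$. The key point is that each assertion for $k_i$ is just the dual of a statement already recorded (or immediately available) for the $m_i$'s.

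First I would prove (i). By Lemma \ref{lem:mi}(i) applied to the dual idempotent system $\Phi^*$, we have $m^*_i \neq 0$ for $0 \leq i \leq d$; since $\nu$ is a multiplicative inverse it is in particular nonzero, so $k_i = \nu m^*_i \neq 0$. Next, for (ii), apply Lemma \ref{lem:mi}(ii) to $\Phi^*$ to get $\sum_{i=0}^d m^*_i = 1$; multiplying through by $\nu$ gives $\sum_{i=0}^d k_i = \nu$, which is (ii). Finally, for (iii): by Definition \ref{def:ISnu} (applied to $\Phi^*$, or directly, using $\nu = \nu^*$) the common value $m^*_0$ has multiplicative inverse $\nu$, i.e. $m^*_0 = \nu^{-1}$, hence $k_0 = \nu m^*_0 = \nu \nu^{-1} = 1$.

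There is no real obstacle here: the proof is essentially a bookkeeping exercise in applying the already-cited facts about the $m_i$ from \cite{NT:ips} to the dual system, together with the observation $\nu = \nu^*$. The only point requiring any care is making sure the duality is invoked consistently — that $m^*_i$ plays the role of $m_i$ for $\Phi^*$ and that $\nu^* = \nu$ so that $m^*_0 = \nu^{-1}$ — but this is exactly what Definition \ref{def:ISnu} records. So the write-up amounts to three one-line deductions as above.
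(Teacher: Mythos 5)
Your argument is correct. Note, however, that the paper does not prove this lemma at all: it is quoted verbatim from \cite[Lemma 8.4]{NT:ips}, so there is no in-paper proof to compare against. Your derivation — applying Lemma \ref{lem:mi} to the dual system $\Phi^*$ to get $m^*_i \neq 0$ and $\sum_{i=0}^d m^*_i = 1$, then using $k_i = \nu m^*_i$ together with $m^*_0 = \nu^{-1}$ from Definition \ref{def:ISnu} — is a clean and complete way to recover the statement from the other facts the paper imports, and the one point needing care (that $\Phi^*$ is again a (symmetric) idempotent system, so the quoted results apply to it) is handled by the paper's duality convention stated below Definition \ref{def:ips}.
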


\begin{lemma}   {\rm (See \cite[Lemma 10.9]{NT:ips}.) }
\label{lem:ISp0ij}    \samepage
\ifDRAFT {\rm lem:ISp0ij}. \fi
For $0 \leq i,j \leq d$, $p^0_{i j} = \delta_{i,j} k_i$.
\end{lemma}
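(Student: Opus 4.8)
Proof plan for Lemma \ref{lem:ISp0ij} ($p^0_{ij} = \delta_{i,j}k_i$).

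The plan is to compute $A_i A_j$ applied to the element $E^*_0 E_0$, extract the coefficient of $A_0 = I$ via a suitable linear functional, and compare with the definition of $k_i$. First I would use Lemma \ref{lem:Ai} repeatedly: since $A_j E^*_0 E_0 = E^*_j E_0$, and $A_i$ lies in $\mathcal M$ (which is commutative and contains all the $E_h$), I would try to rewrite $A_i A_j E^*_0 E_0 = A_i E^*_j E_0$. To make progress I need to understand $A_i E^*_j$; the cleanest route is to use the antiautomorphism $\dagger$, which fixes every $E_h$ and every $E^*_h$, together with the defining relation for $A_i$. Applying $\dagger$ to $A_i E^*_0 E_0 = E^*_i E_0$ gives $E_0 E^*_0 A_i^\dagger = E_0 E^*_i$; since $A_i \in \mathcal M$ and $\mathcal M$ is spanned by the $E_h$ (which are fixed by $\dagger$), we get $A_i^\dagger = A_i$, hence $E_0 E^*_0 A_i = E_0 E^*_i$. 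So $A_i$ is characterized on both sides.

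Next, from \eqref{eq:ISphij} we have $A_i A_j = \sum_h p^h_{ij} A_h$. I would apply this identity to $E^*_0 E_0$ on the right and multiply by $E^*_0 E_0$ on the left, obtaining
\[
  E^*_0 E_0 A_i A_j E^*_0 E_0 = \sum_{h=0}^d p^h_{ij}\, E^*_0 E_0 A_h E^*_0 E_0.
\]
Using $A_h E^*_0 E_0 = E^*_h E_0$ and $E^*_0 E_0 A_h = E_0 E^*_h$ (the dagger-fixed version, with the roles of starred/unstarred idempotents as forced by Lemma \ref{lem:A0} and commutativity of $\mathcal M$), the right-hand summand becomes $E_0 E^*_h E_0$, which by \cite[Definition 4.1]{NT:ips}-type reasoning is a scalar multiple of $E_0$; specifically $E_0 E^*_h E_0 = m_h \nu^{-1} \cdot(\text{something})$ — here I would invoke that $E^*_0 E_0 E^*_0 = m_0 E^*_0$ and analogous rank-one facts, so that $E^*_0 E_0 A_h E^*_0 E_0$ is a known scalar times $E^*_0 E_0 E^*_0$ or $E_0$. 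Meanwhile the left-hand side is $E^*_0 E_0 A_i A_j E^*_0 E_0 = E_0 E^*_i \cdot E^*_j E_0 = \delta_{i,j} E_0 E^*_i E_0$, since $E^*_i E^*_j = \delta_{i,j}E^*_i$. Comparing the two sides, only the $h$ for which $E_0 E^*_h E_0$ is "the same scalar multiple of $E_0$" as $E_0 E^*_i E_0$ survives after applying a trace or the functional $X \mapsto$ ($E_0$-coefficient of $X$); because all $E_0 E^*_h E_0$ are nonzero scalar multiples of $E_0$, I instead take traces: $\text{tr}(E_0 E^*_h E_0) = \text{tr}(E^*_0 E_0 E^*_h) $, reducing to $m$-type quantities, and $\text{tr}(E_0 E^*_i E_0) = \nu^{-1} m^*_i \cdot(\cdots)$.

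The cleanest finish: take the trace of both sides of the displayed equation. Using $\text{tr}(E^*_0 E_0 A_h E^*_0 E_0) = \text{tr}(E_0 E^*_0 A_h) = \text{tr}(E_0 E^*_h) = \delta_{h,0}\,\text{tr}(E_0 E^*_0)$ is too strong unless $E_0 E^*_h$ has trace $\delta_{h,0} m_0$; in fact $\text{tr}(E_0 E^*_h)=\text{tr}(E^*_h E_0)$ and by the dual of Definition \ref{def:mi} this equals $m^*_h$ — wait, one must be careful which $m$ is which, but the upshot is $\text{tr}(\text{RHS}) = \sum_h p^h_{ij}\, c\, m^*_h$ for a fixed constant $c = \nu^{-1}$, and $\text{tr}(\text{LHS}) = \delta_{i,j}\, \text{tr}(E_0 E^*_i E_0) = \delta_{i,j}\, c\, m^*_i$. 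This is not yet enough to isolate $p^0_{ij}$. The standard trick — and the step I expect to be the main obstacle — is to instead pair \eqref{eq:ISphij} with a linear functional that is dual to $A_0$ in the basis $\{A_h\}$; the natural candidate is $X \mapsto \text{tr}(E^*_0 X E_0)/\text{tr}(E^*_0 E_0)$ composed appropriately, but making this genuinely "dual" requires knowing $\text{tr}(E^*_0 A_h E_0)$ or similar, i.e. a mild structure-constant computation. Concretely I would verify $E^*_0 A_h E_0 = E^*_0 E^*_h E_0 = \delta_{h,0}E^*_0 E_0$? No: $E^*_0 A_h E_0 = E^*_0 E_0 A_h$ by commutativity in $\mathcal M$ only if $E^*_0 E_0 \in \mathcal M$, which it is not. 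So the right functional is $X \mapsto$ coefficient of $E_0$ in $E_0 X E_0$, and one computes $E_0 A_h E_0 = E_0 E^*_h E_0 / (\text{scalar})$... Given these subtleties, the actual argument the authors likely use is shorter: apply $\dagger$ and the rank-one identities to get $A_i A_j E^*_0 E_0 = A_i E^*_j E_0$, then left-multiply by $E^*_0$ and use $E^*_0 A_i = $ (the element of $\mathcal M$ acting), reducing everything to $E^*_0 E^*_j E_0 = \delta_{0,j}\cdots$; only $j$-vs-$i$ matching via $E^*_i E^*_j = \delta_{ij}E^*_i$ forces the $\delta_{i,j}$, and the $k_i$ emerges as $\text{tr}(E^*_0 E_i)\cdot \nu = k_i$ by Definition \ref{def:ISki}. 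I would write the computation carefully in that order: (1) dagger-symmetrize $A_h$; (2) compute $E_0 E^*_0 A_i A_j E^*_0 E_0$ two ways; (3) extract scalars by trace, using Lemma \ref{lem:ISki}(i) to divide; (4) read off $p^0_{ij} = \delta_{i,j}k_i$. The main obstacle is bookkeeping the starred/unstarred duality consistently so that the trace of $E_0 E^*_i E_0$ comes out proportional to $m^*_i$ and hence to $k_i$.
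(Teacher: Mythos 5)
The paper does not actually prove this lemma: it is imported verbatim from \cite[Lemma~10.9]{NT:ips}, so there is no in-paper argument to compare against. Judged on its own, your proposal does reach a correct proof, but only after several abandoned detours; the route you settle on at the end (dagger-symmetrize $A_h$, then sandwich the product formula between $E_0E^*_0$ and $E^*_0E_0$) is exactly the one that works, and it closes cleanly. Concretely: since $A_h\in{\mathcal M}$ and $\mathcal M$ is spanned by the $\dagger$-fixed elements $E_i$, we have $A_h^\dagger=A_h$, so applying $\dagger$ to $A_hE^*_0E_0=E^*_hE_0$ gives $E_0E^*_0A_h=E_0E^*_h$. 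Now multiply $A_iA_j=\sum_h p^h_{ij}A_h$ on the left by $E_0E^*_0$ and on the right by $E^*_0E_0$. The left side becomes
\[
(E_0E^*_0A_i)(A_jE^*_0E_0)=E_0E^*_iE^*_jE_0=\delta_{i,j}\,E_0E^*_iE_0=\delta_{i,j}\,m^*_i\,E_0,
\]
using $E^*_iE^*_j=\delta_{i,j}E^*_i$ and the rank-one fact $E_0XE_0=\text{\rm tr}(E_0X)\,E_0$ together with $m^*_i=\text{\rm tr}(E_0E^*_i)$. The right side collapses because $E_0E^*_0\,(A_hE^*_0E_0)=E_0E^*_0E^*_hE_0=\delta_{h,0}\,E_0E^*_0E_0=\delta_{h,0}\,\nu^{-1}E_0$, giving $p^0_{ij}\,\nu^{-1}E_0$. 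Equating yields $p^0_{ij}=\delta_{i,j}\,\nu m^*_i=\delta_{i,j}k_i$ by Definition \ref{def:ISki}. Two cautions about your write-up: the ``take traces of both sides'' digression genuinely does not isolate $p^0_{ij}$ (as you suspected) and should be deleted rather than hedged; and the scalar that appears is $\nu\,\text{\rm tr}(E_0E^*_i)=\nu m^*_i=k_i$, not $\nu\,\text{\rm tr}(E^*_0E_i)=\nu m_i$ --- you flag this starred/unstarred bookkeeping as ``the main obstacle,'' and it is the one place where a sign-off on which trace is which is actually needed, but it resolves in your favor.
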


\begin{lemma}   {\rm (See \cite[Lemma 10.10]{NT:ips}.) }
\label{lem:ISkikj}    \samepage
\ifDRAFT {\rm lem:ISkikj}. \fi
For $0 \leq i,j \leq d$,  $k_i k_j = \sum_{h=0}^d p^h_{i j} k_h$.
\end{lemma}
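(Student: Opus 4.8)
The plan is to reduce the identity to the single auxiliary fact that
\[
  A_i E_0 = k_i E_0 \qquad (0 \leq i \leq d),
\]
which says that $E_0$ is a common eigenvector for the $A_i$ with eigenvalue $k_i$ --- the abstract analogue of the all-ones vector being a common eigenvector for the adjacency matrices of a symmetric association scheme, with eigenvalue the valency. Granting this, the lemma follows in one line: multiply the defining relation \eqref{eq:ISphij} on the right by $E_0$ to get $A_i A_j E_0 = \sum_{h=0}^d p^h_{i j} A_h E_0$, then apply the auxiliary fact to the right-hand side and twice to the left-hand side, obtaining $k_i k_j E_0 = \big(\sum_{h=0}^d p^h_{i j} k_h\big) E_0$; since $E_0 \neq 0$ we conclude $k_i k_j = \sum_{h=0}^d p^h_{i j} k_h$.

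To prove the auxiliary fact, recall that $A_i \in {\mathcal M}$ (Lemma \ref{lem:Ai}) and that $\{E_\ell\}_{\ell=0}^d$ is a basis for ${\mathcal M}$ (Definition \ref{def:M}), so write $A_i = \sum_{\ell=0}^d a_{i\ell} E_\ell$; the orthogonality relations $E_\ell E_0 = \delta_{\ell,0} E_0$ immediately give $A_i E_0 = a_{i0} E_0$, and it remains to show $a_{i0} = k_i$. Starting from $A_i E^*_0 E_0 = E^*_i E_0$ (Lemma \ref{lem:Ai}), multiply on the left by $E_0$; since ${\mathcal M}$ is commutative (Definition \ref{def:M}) and $E_0, A_i \in {\mathcal M}$, we may interchange $E_0$ and $A_i$ to get $A_i \big( E_0 E^*_0 E_0 \big) = E_0 E^*_i E_0$. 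Because $E_0$ has rank $1$, the space $E_0 \A E_0$ is one-dimensional with basis $E_0$, so $E_0 X E_0 = \text{\rm tr}(E_0 X) E_0$ for every $X \in \A$ (take the trace of both sides and use $\text{\rm tr}(E_0) = 1$). Applying this with $X = E^*_0$ and $X = E^*_i$, and recalling that $m^*_\ell = \text{\rm tr}(E_0 E^*_\ell)$ and $m^*_0 = m_0$ (Definitions \ref{def:mi}, \ref{def:ISnu}), the relation becomes $m_0 \, A_i E_0 = m^*_i E_0$, hence $m_0 a_{i0} = m^*_i$. Therefore $a_{i0} = m^*_i / m_0 = \nu m^*_i = k_i$ by Definitions \ref{def:ISnu} and \ref{def:ISki}.

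I do not anticipate a serious obstacle; the only steps needing a little care are the rank-$1$ identity $E_0 X E_0 = \text{\rm tr}(E_0 X) E_0$ and the bookkeeping among $m_i$, $m^*_i$, $\nu$, and $k_i$. Everything else is forced by the commutativity of ${\mathcal M}$ and the orthogonality of the $E_\ell$. An equivalent packaging, should it read more cleanly, is to observe that $X \mapsto \text{\rm tr}(E_0 X)$ restricts to an algebra homomorphism ${\mathcal M} \to \F$ (from $X E_0 = \text{\rm tr}(E_0 X) E_0$ for $X \in {\mathcal M}$) that sends $A_i \mapsto k_i$, and then apply it to \eqref{eq:ISphij}; this makes the common-eigenvector heuristic precise but relies on the same rank-$1$ computation.
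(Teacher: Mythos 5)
Your argument is correct. Note first that the paper itself gives no proof of this lemma: it is quoted from the companion paper \cite[Lemma 10.10]{NT:ips}, so there is no in-text proof to compare against. Your derivation, however, is sound and uses only material available here. The key auxiliary fact $A_i E_0 = k_i E_0$ is established correctly: writing $A_i = \sum_\ell a_{i\ell} E_\ell$ gives $A_i E_0 = a_{i0} E_0$; multiplying $A_i E^*_0 E_0 = E^*_i E_0$ on the left by $E_0$, commuting $E_0$ past $A_i$ inside $\mathcal M$, and applying the rank-one identity $E_0 X E_0 = \mathrm{tr}(E_0 X)\, E_0$ (valid since a rank $1$ idempotent has trace $1$ and $E_0 \A E_0$ is one-dimensional) yields $m_0\, a_{i0} = m^*_i$, whence $a_{i0} = \nu m^*_i = k_i$; here $m_0 = m^*_0 \neq 0$ by Lemma \ref{lem:mi} and Definition \ref{def:ISnu}. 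Right-multiplying \eqref{eq:ISphij} by $E_0$ then finishes the proof. Your ``equivalent packaging'' is worth flagging: the map $X \mapsto \mathrm{tr}(E_0 X)$ restricted to $\mathcal M$ is exactly the homomorphism $\vphi$ with $\vphi(E_i) = \delta_{i,0}$ that the paper later uses in Proposition \ref{prop:PsiPhi}, where $\vphi(A_j) = k_j$ is instead read off from the eigenmatrix entry $P_{0,j} = k_j$ of Lemma \ref{lem:P0i}(ii). Your route obtains $\vphi(A_i) = k_i$ directly from the defining relation of $A_i$ and the trace definitions of the $m_i$, which has the advantage of not presupposing the eigenmatrix facts imported from \cite{NT:ips}.
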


\begin{defi}    \label{def:P}    \samepage
\ifDRAFT {\rm def:P}. \fi
Each of $\{E_i\}_{i=0}^d$ and $\{A_i\}_{i=0}^d$ is a basis for the
vector space $\mathcal M$.
Let $P = P_\Phi$ denote the transition matrix  from $\{E_i\}_{i=0}^d$ to $\{A_i\}_{i=0}^d$.
We call $P$ the {\em first eigenmatrix of $\Phi$}.
Let $Q = Q_\Phi$ denote the first eigenmatrix of $\Phi^*$.
We call $Q$ the {\em second eigenmatrix of $\Phi$}.
\end{defi}

\begin{lemma}    {\rm (See \cite[Lemmas 12.7, 12.8]{NT:ips}.) }
\label{lem:P0i}    \samepage
\ifDRAFT {\rm lem:P0i}. \fi
For $0 \leq i,j \leq d$ the following hold:
\begin{itemize}
\item[\rm (i)]
$P_{i,0} = 1$;
\item[\rm (ii)]
$P_{0,j} = k_j$;
\item[\rm (iii)]
$(P^{-1})_{i,0} = \nu^{-1}$;
\item[\rm (iv)]
$(P^{-1})_{0,j} = \nu^{-1} k^*_j$.
\end{itemize}
\end{lemma}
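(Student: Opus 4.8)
The plan is to extract each of the four claims from the defining relation $A_i E^*_0 E_0 = E^*_i E_0$ of Lemma~\ref{lem:Ai}, together with the earlier normalization facts (Lemmas~\ref{lem:A0}, \ref{lem:ISki}, \ref{lem:ISp0ij}, \ref{lem:ISkikj}) and the definition of $P_\Phi$ as the transition matrix from $\{E_i\}_{i=0}^d$ to $\{A_i\}_{i=0}^d$, i.e.\ $A_j = \sum_{i=0}^d P_{i,j} E_i$ for $0 \leq j \leq d$. For (i): since $A_0 = I = \sum_{i=0}^d E_i$ by Lemma~\ref{lem:A0} and Lemma~\ref{lem:Ei}, comparing coefficients in the basis $\{E_i\}_{i=0}^d$ gives $P_{i,0} = 1$ for all $i$. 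For (ii): applying the functional $\text{tr}(E^*_0 \,\cdot\,)$ to $A_j = \sum_i P_{i,j} E_i$ and using $\text{tr}(E^*_0 E_i) = m_i$ from Definition~\ref{def:mi}, I would get $\text{tr}(E^*_0 A_j) = \sum_i P_{i,j} m_i$; I then need to identify $\text{tr}(E^*_0 A_j)$ with $m_0 k_j = \nu^{-1} k_j$. This should follow by multiplying $A_j E^*_0 E_0 = E^*_j E_0$ on the left by $E^*_0$ and taking traces, using $E^*_0 E_i E^*_0 = m_i \nu E^*_0$-type identities already implicit in the cited results; one concludes $P_{0,j} = k_j$ after dividing by $m_0 = \nu^{-1}$. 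Actually a cleaner route for (ii): the top row of the transition matrix picks out the $E_0$-coefficient of $A_j$, and $E_0 A_j E_0 = ?$; but the trace approach is the safe one.

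For (iii) and (iv), I would pass to the inverse transition matrix, which expresses $E_j = \sum_i (P^{-1})_{i,j} A_i$. Claim (iii), $(P^{-1})_{i,0} = \nu^{-1}$, says that $E_0 = \nu^{-1} \sum_i A_i$; equivalently $\sum_{i=0}^d A_i = \nu E_0$. To see this, sum the relations of Lemma~\ref{lem:Ai} over $i$: $\big(\sum_i A_i\big) E^*_0 E_0 = \big(\sum_i E^*_i\big) E_0 = I\cdot E_0 = E_0$ (using $\sum_i E^*_i = I$ from Lemma~\ref{lem:Ei}). On the other hand $\nu E_0 \cdot E^*_0 E_0 = \nu E_0 E^*_0 E_0 = \nu m_0 E_0 = E_0$ since $\nu m_0 = 1$ and $E_0 E^*_0 E_0 = m_0 E_0$ (the rank-one idempotent identity, essentially \eqref{eq:canaux1} transported to $\A$). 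By the uniqueness clause in Lemma~\ref{lem:Ai} — the element $X \in \mathcal M$ with $X E^*_0 E_0 = E_0$ is unique — we get $\sum_i A_i = \nu E_0$, hence $(P^{-1})_{i,0} = \nu^{-1}$. Claim (iv) is claim (iii) applied to the dual system $\Phi^*$: by Definition~\ref{def:P}, $Q_\Phi = P_{\Phi^*}$, so $(Q^{-1})_{0,j}$ is governed by the dual version of what we need, and the relation between $(P^{-1})_{0,j}$ and the dual data comes from the symmetry provided by the antiautomorphism $\dagger$, which swaps the roles of $\{E_i\}$ and $\{E^*_i\}$ and fixes $\mathcal M$ pointwise. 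Concretely, applying $\dagger$ to $A_i E^*_0 E_0 = E^*_i E_0$ and using $A_i^\dagger = A_i$ yields $E_0 E^*_0 A_i = E_0 E^*_i$, and taking traces against $E_0$ (or comparing with the defining relation for $A^*_i$) converts $(P^{-1})_{0,j}$ into $\nu^{-1} k^*_j$.

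The main obstacle I anticipate is bookkeeping the trace/idempotent identities of the form $E^*_0 E_i E^*_0 = (\text{scalar}) E^*_0$ and $E_0 E^*_i E_0 = (\text{scalar}) E_0$ with the correct scalars ($m_i$, $m^*_i$, or their products) and correctly invoking the uniqueness statement of Lemma~\ref{lem:Ai} to pin down the relevant sums inside $\mathcal M$ rather than merely up to something. I expect parts (i) and (iii) to be quick, while (ii) and (iv) require carefully tracking the scalar $\nu$ and the dual quantities $k^*_j$, and making sure that the identification of $\text{tr}(E^*_0 A_j)$ with $\nu^{-1} k_j$ is legitimate; if a direct trace computation is awkward, the fallback is to use $A_j E^*_0 E_0 = E^*_j E_0$, left-multiply by $E_0$, take traces, and use $\text{tr}(E_0 E^*_j E_0) = \text{tr}(E^*_j E_0) = m^*_j$ together with $k_j = \nu m^*_j$ from Definition~\ref{def:ISki}.
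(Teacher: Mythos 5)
The paper itself supplies no proof of this lemma---it is imported wholesale from \cite[Lemmas 12.7, 12.8]{NT:ips}---so your attempt can only be judged on its own terms. Parts (i) and (iii) are essentially sound: (i) is immediate from $A_0=I=\sum_i E_i$, and for (iii) your comparison of $\sum_i A_i$ with $\nu E_0$ via the equation $X E^*_0 E_0 = E_0$ works, provided you justify injectivity of the map ${\mathcal M}\to{\mathcal A}$, $X\mapsto X E^*_0 E_0$ (Lemma \ref{lem:Ai} only asserts uniqueness for the right-hand sides $E^*_iE_0$; the general injectivity follows because the images $E^*_iE_0$ of the basis $\{A_i\}$ are linearly independent, as $E^*_j\cdot\sum_i c_iE^*_iE_0=c_jE^*_jE_0$ and ${\rm tr}(E^*_jE_0)=m^*_j\neq 0$).

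The genuine problems are in (ii) and (iv). For (ii), your primary route rests on the identification ${\rm tr}(E^*_0A_j)=\nu^{-1}k_j$, which is false: ${\rm tr}(E^*_0A_j)=\sum_iP_{i,j}m_i=\delta_{0,j}$, not $\nu^{-1}k_j$. (Check $d=1$, $k\neq -1$: there $m_0=\tfrac{1}{k+1}$, $m_1=\tfrac{k}{k+1}$, and ${\rm tr}(E^*_0A_1)=k m_0-m_1=0\neq\tfrac{k}{k+1}=\nu^{-1}k_1$.) Only your fallback is correct: left-multiply $A_jE^*_0E_0=E^*_jE_0$ by $E_0$, use $E_0A_j=P_{0,j}E_0$ and the rank-one identity $E_0XE_0={\rm tr}(E_0X)E_0$ to get $P_{0,j}\,m^*_0\,E_0=m^*_j\,E_0$, hence $P_{0,j}=\nu m^*_j=k_j$. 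For (iv) the argument does not close at all: applying $\dagger$ to the defining relation and ``taking traces against $E_0$'' merely reproduces (ii), and ``claim (iii) applied to the dual'' gives information about $Q^{-1}=(P_{\Phi^*})^{-1}$, which transfers to $P^{-1}$ only through $PQ=\nu I$ (Lemma \ref{lem:PQ}(i)), a nontrivial fact you neither prove nor invoke. A direct repair in the spirit of your fallback: from $E_j=\sum_i(P^{-1})_{i,j}A_i$ obtain $E_jE^*_0E_0=\sum_i(P^{-1})_{i,j}E^*_iE_0$, left-multiply by $E^*_0$ and use $E^*_0E_jE^*_0={\rm tr}(E^*_0E_j)E^*_0=m_jE^*_0$ to get $m_j\,E^*_0E_0=(P^{-1})_{0,j}\,E^*_0E_0$; since $E^*_0E_0\neq 0$ this yields $(P^{-1})_{0,j}=m_j=\nu^{-1}k^*_j$, and setting $j=0$ also recovers (iii).
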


\begin{defi}   {\rm (See \cite[Definition 14.1]{NT:ips}.) }
\label{def:KKs}    \samepage
\ifDRAFT {\rm def:KKs}. \fi
Let $K$ (resp.\ $K^*$) denote the diagonal matrix in $\Matd$
that has $(i,i)$-entry $k_i$ (resp.\ $k^*_i$) for $0 \leq i \leq d$.
\end{defi}

\begin{lemma}   {\rm (See \cite[Lemma 14.2]{NT:ips}.) }
 \label{lem:PQ}    \samepage
\ifDRAFT {\rm lem:PQ}. \fi
The following hold:
\begin{itemize}
\item[\rm (i)]
$P Q = \nu I$;
\item[\rm (ii)]
$P^{\sf t} K^* = K Q$.
\end{itemize}
\end{lemma}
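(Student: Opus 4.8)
The plan is to deduce both parts from the single relation of Lemma~\ref{lem:Ai}, $A_iE^*_0E_0=E^*_iE_0$, together with its image under the antiautomorphism $\dagger$ and the analogous relations for $\Phi^*$. As preparation I would record that $\dagger$ fixes $\mathcal M$ pointwise (it fixes each $E_i$ and is linear), so $A_i^\dagger=A_i$ by Lemma~\ref{lem:Ai}; applying $\dagger$ to $A_iE^*_0E_0=E^*_iE_0$ then gives $E_0E^*_0A_i=E_0E^*_i$. Running the same two remarks for $\Phi^*$ (whose antiautomorphism is again $\dagger$) produces $A^*_iE_0E^*_0=E_iE^*_0$ and $E^*_0E_0A^*_i=E^*_0E_i$. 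I will also use the elementary fact that $EXE=\mathrm{tr}(EX)\,E$ for a rank-one idempotent $E$ and any $X\in\A$; in particular $E^*_0E_0E^*_0=m_0E^*_0=\nu^{-1}E^*_0$ by Definitions~\ref{def:mi} and~\ref{def:ISnu}.

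For part~(i) the idea is a two-way evaluation of $E_i\big(\sum_{l=0}^dQ_{l,j}A_l\big)E^*_0E_0$ for fixed $i,j$. Since $E_iA_l=P_{i,l}E_i$, the first expansion gives $(PQ)_{i,j}E_iE^*_0E_0$. For the second, $\sum_lQ_{l,j}A_lE^*_0E_0=\sum_lQ_{l,j}E^*_lE_0=A^*_jE_0$ by Lemma~\ref{lem:Ai} and the definition of $Q$ as the first eigenmatrix of $\Phi^*$, so the expression equals $E_iA^*_jE_0$. I would then right-multiply the resulting equality $(PQ)_{i,j}E_iE^*_0E_0=E_iA^*_jE_0$ by $E^*_0$: the right side becomes $E_iA^*_jE_0E^*_0=E_iE_jE^*_0=\delta_{i,j}E_iE^*_0$, and the left side becomes $\nu^{-1}(PQ)_{i,j}E_iE^*_0$. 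Because $E^*_0E_iE^*_0\neq0$ forces $E_iE^*_0\neq0$, cancelling $E_iE^*_0$ yields $(PQ)_{i,j}=\nu\delta_{i,j}$, i.e.\ (i).

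For part~(ii) I would first isolate the auxiliary identity $\mathrm{tr}(E^*_0A_h)=\delta_{h,0}$: left-multiplying $E_0E^*_0A_h=E_0E^*_h$ by $E^*_0$ gives $\nu^{-1}E^*_0A_h=E^*_0E_0E^*_h$, and then right-multiplying by $E^*_0$ gives $E^*_0A_hE^*_0=\delta_{h,0}E^*_0$, whence the claim by comparing coefficients of $E^*_0$. Next, for all $j,j'$,
\[
 (P^{\sf t}K^*P)_{j,j'}=\sum_{i=0}^dP_{i,j}\,k^*_i\,P_{i,j'}=\nu\,\mathrm{tr}\Big(E^*_0\sum_{i=0}^dP_{i,j}P_{i,j'}E_i\Big)=\nu\,\mathrm{tr}(E^*_0A_jA_{j'}),
\]
using $k^*_i=\nu m_i$ (the dual of Definition~\ref{def:ISki}) and $A_jA_{j'}=\sum_iP_{i,j}P_{i,j'}E_i$. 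Expanding $A_jA_{j'}$ via \eqref{eq:ISphij} and applying the auxiliary identity together with Lemma~\ref{lem:ISp0ij} gives $\mathrm{tr}(E^*_0A_jA_{j'})=p^0_{j,j'}=\delta_{j,j'}k_j$, so $P^{\sf t}K^*P=\nu K$. Combining with part~(i) in the form $Q=\nu P^{-1}$ yields $P^{\sf t}K^*=\nu KP^{-1}=KQ$, i.e.\ (ii).

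The main obstacle is the choice of element to sandwich in part~(i): one must wedge $\sum_lQ_{l,j}A_l$ between $E_i$ and $E^*_0E_0$ so that one expansion manufactures $(PQ)_{i,j}$ while the other collapses through Lemma~\ref{lem:Ai} to $A^*_j$, and one must track which products are nonzero to license the final cancellation. Once the $\dagger$-images of the bridge relation and the identity $\mathrm{tr}(E^*_0A_h)=\delta_{h,0}$ are available, the remaining steps are short computations.
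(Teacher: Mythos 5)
Your argument is correct. Note first that the paper itself offers no proof of this lemma: it is quoted verbatim from the companion paper (cited as Lemma~14.2 of the reference on idempotent systems), so there is no in-paper argument to compare against. Judged on its own, your derivation is sound and uses only ingredients that are actually stated in the present paper: the defining relation $A_iE^*_0E_0=E^*_iE_0$ of Lemma~\ref{lem:Ai} and its $\dagger$-image, the dual relations for $\Phi^*$, the rank-one identity $EXE=\mathrm{tr}(EX)E$, Definitions~\ref{def:mi}, \ref{def:ISnu}, \ref{def:ISki}, and Lemma~\ref{lem:ISp0ij}. I checked the key steps: the two-way expansion of $E_i\bigl(\sum_lQ_{l,j}A_l\bigr)E^*_0E_0$ does yield $(PQ)_{i,j}E_iE^*_0E_0=E_iA^*_jE_0$; right multiplication by $E^*_0$ and the nonvanishing of $E_iE^*_0$ (forced by Definition~\ref{def:ips}(iv)) legitimately give $(PQ)_{i,j}=\nu\delta_{i,j}$. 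For (ii), the auxiliary identity $\mathrm{tr}(E^*_0A_h)=\delta_{h,0}$ is correctly extracted from the $\dagger$-image of the bridge relation, and the computation $(P^{\sf t}K^*P)_{j,j'}=\nu\,\mathrm{tr}(E^*_0A_jA_{j'})=\nu\,p^0_{jj'}=\nu\delta_{j,j'}k_j$ combined with $Q=\nu P^{-1}$ gives $P^{\sf t}K^*=KQ$ as claimed. One small presentational point: you should state explicitly that the trace is well defined on the abstract algebra $\A$ (every isomorphism $\A\to\Matd$ gives the same trace, since automorphisms of $\Matd$ are inner), as Definition~\ref{def:mi} already presupposes this; with that remark in place the proof is complete.
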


\begin{lemma}   {\rm (See \cite[Lemma 14.4, Proposition 18.1]{NT:ips}.) }
\label{lem:EiEsi}      \samepage
\ifDRAFT {\rm lem:EiEsi}. \fi
There exists an algebra isomorphism ${\mathcal A} \to \Matd$
that sends $E_i \mapsto \Delta_{i,i}$ and $E^*_i \mapsto P \Delta_{i,i} P^{-1}$
for $0 \leq i \leq d$.
\end{lemma}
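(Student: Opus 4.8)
The plan is to realize the abstract system $\Phi$ concretely as $\Phi_R$ for some solid invertible $R$, to show that $R$ is diagonally equivalent to the first eigenmatrix $P = P_\Phi$, and then to adjust the chosen realization by conjugation with an invertible diagonal matrix so that the distinguished matrix becomes exactly $P$.

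First I would use Lemma \ref{lem:E} to pick an algebra isomorphism $\sigma : \A \to \Matd$ with $E_i^\sigma = \Delta_{i,i}$ for $0 \le i \le d$. Then $\Phi^\sigma$ is an idempotent system in $\Matd$ whose first component is $\{\Delta_{i,i}\}_{i=0}^d$, so by Lemma \ref{lem:R} there is an invertible $R \in \Matd$ with $(E^*_i)^\sigma = R\Delta_{i,i}R^{-1}$ for $0 \le i \le d$; here $R$ is solid by Proposition \ref{prop:canonical2}, and $\Phi^\sigma = \Phi_R$ in the sense of Definition \ref{def:PhiR}. Thus $\sigma$ is an isomorphism of idempotent systems from $\Phi$ to $\Phi_R$. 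Since symmetry transports along $\sigma$ (the map $\sigma\circ\dagger\circ\sigma^{-1}$ is an antiautomorphism of $\Matd$ fixing each $\Delta_{i,i}$ and each $R\Delta_{i,i}R^{-1}$), the system $\Phi_R$ is symmetric, so its elements $A_i$ from Lemma \ref{lem:Ai} are defined; because these are characterized inside $\mathcal M$ by $A_i E^*_0 E_0 = E^*_i E_0$, the images $A_i^\sigma$ are exactly the corresponding elements for $\Phi_R$, and the transition matrix is unchanged, whence $P_\Phi = P_{\Phi_R}$.

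Next I would carry out the (routine) matrix computation, in the spirit of Lemma \ref{lem:PhiRpre}, identifying $P_{\Phi_R}$ with a matrix diagonally equivalent to $R$. With $E_i=\Delta_{i,i}$ and $E^*_i = R\Delta_{i,i}R^{-1}$ one finds $(E^*_i E_0)_{r,s} = R_{r,i}(R^{-1})_{i,0}\,\delta_{s,0}$ and $(E^*_0 E_0)_{r,s} = R_{r,0}(R^{-1})_{0,0}\,\delta_{s,0}$, so the unique diagonal matrix $A_i$ with $A_i E^*_0 E_0 = E^*_i E_0$ has $(r,r)$-entry $R_{r,i}(R^{-1})_{i,0}/(R_{r,0}(R^{-1})_{0,0})$, the denominators being nonzero since $R$ is solid. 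Reading off the transition matrix from $\{\Delta_{i,i}\}$ to $\{A_i\}$ gives
\[
 (P_\Phi)_{r,i} = \frac{1}{R_{r,0}}\, R_{r,i}\, \frac{(R^{-1})_{i,0}}{(R^{-1})_{0,0}} \qquad (0 \le r,i \le d),
\]
so $P_\Phi = H R K$ where $H$, $K$ are the invertible diagonal matrices with $H_{r,r} = 1/R_{r,0}$ and $K_{i,i} = (R^{-1})_{i,0}/(R^{-1})_{0,0}$. In particular $P_\Phi$ is diagonally equivalent to $R$ (and one checks directly that it is normalized, consistent with Lemma \ref{lem:P0i}). This is the one step with genuine content; the denominator-chasing and the verification that the two off-row/column conditions are respected is the only place care is needed.

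Finally, since $P_\Phi = HRK$ with $H$, $K$ invertible diagonal, I would set $\sigma' : \A \to \Matd$, $A \mapsto H A^\sigma H^{-1}$ (equivalently, compose $\sigma$ with the automorphism of $\Matd$ furnished by Proposition \ref{prop:RS} that is an isomorphism of idempotent systems from $\Phi_R$ to $\Phi_{P_\Phi}$). Using Lemma \ref{lem:diag}, $E_i^{\sigma'} = H\Delta_{i,i}H^{-1} = \Delta_{i,i}$, while $(E^*_i)^{\sigma'} = (HR)\Delta_{i,i}(HR)^{-1} = P_\Phi K^{-1}\Delta_{i,i}K P_\Phi^{-1} = P_\Phi\Delta_{i,i}P_\Phi^{-1}$, again by Lemma \ref{lem:diag}. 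Thus $\sigma'$ is an algebra isomorphism $\A \to \Matd$ sending $E_i \mapsto \Delta_{i,i}$ and $E^*_i \mapsto P\Delta_{i,i}P^{-1}$ for $0 \le i \le d$, which is the assertion.
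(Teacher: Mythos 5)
Your proof is correct. Note, however, that the paper does not prove this lemma at all: it is imported verbatim from the companion paper \cite{NT:ips} (Lemma 14.4 and Proposition 18.1 there), where it is established as part of a longer development involving the matrices representing $E_i$, $E^*_i$ on the primary module. What you have written is a self-contained derivation using only the machinery of the present paper, and it hangs together: Lemma \ref{lem:E} and Lemma \ref{lem:R} give a realization $\Phi^\sigma=\Phi_R$ with $R$ solid (Proposition \ref{prop:canonical2}); your observation that $\sigma\circ\dagger\circ\sigma^{-1}$ transports the antiautomorphism correctly justifies applying Lemma \ref{lem:Ai} to $\Phi_R$, and the uniqueness in that lemma gives $A_i^\sigma$ as the corresponding elements for $\Phi_R$, hence $P_\Phi=P_{\Phi_R}$; the explicit computation $(P_\Phi)_{r,i}=R_{r,i}(R^{-1})_{i,0}/\bigl(R_{r,0}(R^{-1})_{0,0}\bigr)$ is right, exhibits $P_\Phi=HRK$ with $H$, $K$ invertible diagonal, and the final conjugation by $H$ (Proposition \ref{prop:RS} together with Lemma \ref{lem:diag}) lands exactly on $E_i\mapsto\Delta_{i,i}$, $E^*_i\mapsto P\Delta_{i,i}P^{-1}$. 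As a bonus your formula reproves most of Corollary \ref{cor:canonical} directly (normalization and solidity of $P$ drop out of the displayed entries), whereas the paper obtains those facts separately from Lemmas \ref{lem:P0i} and \ref{lem:PQ}. The trade-off is that your route re-derives in-line some of what \cite{NT:ips} already packages, but it has the advantage of making the present paper logically self-contained at this point.
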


\begin{corollary}    \label{cor:canonical}    \samepage
\ifDRAFT {\rm cor:canonical}. \fi
The first eigenmatrix $P$ is AO normalized solid invertible.
Moreover the idempotent system $\Phi_P$ is isomorphic to $\Phi$.
\end{corollary}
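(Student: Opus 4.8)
The plan is to deduce Corollary~\ref{cor:canonical} from the results already assembled in this section, principally Lemmas~\ref{lem:P0i}, \ref{lem:PQ}, and \ref{lem:EiEsi}. First I would verify that $P$ is solid invertible. Invertibility is immediate since $P$ is a transition matrix between two bases of $\mathcal M$ (Definition~\ref{def:P}). For solidity I must check the four nonvanishing conditions of Definition~\ref{def:solid}. By Lemma~\ref{lem:P0i}(i),(ii) the entries of $P$ in column $0$ are all $1$ and the entries in row $0$ are $P_{0,j} = k_j$, all nonzero by Lemma~\ref{lem:ISki}(i). By Lemma~\ref{lem:P0i}(iii),(iv) the entries of $P^{-1}$ in column $0$ are all $\nu^{-1}$ (nonzero, as $\nu$ is the inverse of a common value of the $m_i$, hence nonzero) and the entries in row $0$ are $\nu^{-1} k^*_j$, again nonzero by Lemma~\ref{lem:ISki}(i) applied to $\Phi^*$. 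So $P$ is solid invertible.

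Next I would check that $P$ is AO, i.e., that $P^{\sf t}$ is diagonally equivalent to $P^{-1}$ (Definition~\ref{def:feasible}). This follows directly from Lemma~\ref{lem:PQ}: part (i) gives $Q = \nu P^{-1}$, and substituting into part (ii) yields $P^{\sf t} K^* = K Q = \nu K P^{-1}$. Since $K$ and $K^*$ are invertible diagonal matrices (their diagonal entries $k_i$, $k^*_i$ are nonzero by Lemma~\ref{lem:ISki}(i)), we get $P^{\sf t} = \nu K P^{-1} (K^*)^{-1}$, exhibiting the required diagonal equivalence with $H = \nu K$ and $K_{\mathrm{diag}} = (K^*)^{-1}$.

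Then I would check that $P$ is normalized in the sense of Definition~\ref{def:normalized}: column $0$ of $P$ has all entries equal to $1$, which is Lemma~\ref{lem:P0i}(i); and column $0$ of $P^{-1}$ has all entries the same, namely $\nu^{-1}$, which is Lemma~\ref{lem:P0i}(iii). This establishes the first assertion, that $P$ is AO normalized solid invertible; in particular $P \in \text{\rm AON}_d(\F)$, so $\Phi_P$ is a (symmetric) idempotent system by Definition~\ref{def:PhiR} and Proposition~\ref{prop:canonical3}. For the second assertion, Lemma~\ref{lem:EiEsi} furnishes an algebra isomorphism $\sigma : {\mathcal A} \to \Matd$ sending $E_i \mapsto \Delta_{i,i}$ and $E^*_i \mapsto P \Delta_{i,i} P^{-1}$ for $0 \leq i \leq d$. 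Thus $\Phi^\sigma = (\{\Delta_{i,i}\}_{i=0}^d; \{P\Delta_{i,i}P^{-1}\}_{i=0}^d) = \Phi_P$, so $\sigma$ is an isomorphism of idempotent systems from $\Phi$ to $\Phi_P$, and $\Phi_P$ is isomorphic to $\Phi$.

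I do not anticipate a serious obstacle here: the corollary is essentially a bookkeeping consequence of facts imported from \cite{NT:ips} via the lemmas of this section. The one point requiring a little care is the AO verification, where I must combine both parts of Lemma~\ref{lem:PQ} correctly and confirm that the matrices $K$, $K^*$ witnessing the diagonal equivalence are genuinely invertible — which is exactly what Lemma~\ref{lem:ISki}(i) guarantees. Everything else is a direct appeal to Lemma~\ref{lem:P0i} and Lemma~\ref{lem:EiEsi}.
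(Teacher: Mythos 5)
Your proposal is correct and follows essentially the same route as the paper, which proves the corollary by citing exactly the same three ingredients: Lemma \ref{lem:P0i} for normalized solid invertibility, Lemma \ref{lem:PQ} for the AO property, and Lemma \ref{lem:EiEsi} for the isomorphism $\Phi_P \cong \Phi$. You have merely spelled out the details (nonvanishing of $k_i$, $k^*_i$, $\nu$ via Lemma \ref{lem:ISki}, and the explicit diagonal equivalence $P^{\sf t} = \nu K P^{-1}(K^*)^{-1}$) that the paper leaves implicit.
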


\begin{proof}
By Lemma \ref{lem:P0i}, $P$ is normalized solid invertible.
By Lemma \ref{lem:PQ}, $P$ is AO.
By Lemma \ref{lem:EiEsi}, $\Phi_P$ is isomorphic to $\Phi$.
\end{proof}

\begin{prop}     \label{prop:inverse}    \samepage
\ifDRAFT {\rm prop:inverse}. \fi
Referring to the bijection in Corollary \ref{cor:main2},
the inverse bijection sends $[\Phi] \mapsto P_\Phi$.
\end{prop}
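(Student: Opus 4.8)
The plan is to show that the two maps in question are mutually inverse by checking one composite. By Corollary \ref{cor:main2}, the map $R \mapsto [\Phi_R]$ is a bijection $\text{\rm AON}_d(\F) \to \text{\rm SIS}_d(\F)$; to identify the inverse it suffices to verify that $[\Phi] \mapsto P_\Phi$ is a well-defined map $\text{\rm SIS}_d(\F) \to \text{\rm AON}_d(\F)$ that is a left inverse (equivalently right inverse, since both composites of a bijection with a candidate inverse agree once one does). The key input is Corollary \ref{cor:canonical}: given a symmetric idempotent system $\Phi$, the first eigenmatrix $P = P_\Phi$ is AO normalized solid invertible, and $\Phi_P \cong \Phi$, i.e.\ $[\Phi_{P_\Phi}] = [\Phi]$. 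This already shows that $R \mapsto [\Phi_R]$ followed by $[\Phi] \mapsto P_\Phi$ composed in the order "$\text{\rm SIS} \to \text{\rm AON} \to \text{\rm SIS}$" is the identity.

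First I would check that $[\Phi] \mapsto P_\Phi$ is well defined on isomorphism classes: if $\Phi \cong \Phi'$ via an algebra isomorphism $\sigma$, then $\sigma$ carries the data $E_i, E^*_i, \mathcal M, E^*_0E_0, A_i$ of $\Phi$ to the corresponding data of $\Phi'$ (each $A_i$ is characterized by $A_i E^*_0 E_0 = E^*_i E_0$ inside $\mathcal M$, a relation preserved by $\sigma$), hence the transition matrix from $\{E_i\}$ to $\{A_i\}$ equals that from $\{E'_i\}$ to $\{A'_i\}$; so $P_\Phi = P_{\Phi'}$. By Corollary \ref{cor:canonical}, $P_\Phi \in \text{\rm AON}_d(\F)$, so the assignment $[\Phi] \mapsto P_\Phi$ is a genuine map $\text{\rm SIS}_d(\F) \to \text{\rm AON}_d(\F)$.

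Next I would close the loop: applying $[\Phi] \mapsto P_\Phi$ and then $R \mapsto [\Phi_R]$ sends $[\Phi] \mapsto [\Phi_{P_\Phi}] = [\Phi]$ by Corollary \ref{cor:canonical}. Since $R \mapsto [\Phi_R]$ is a bijection (Corollary \ref{cor:main2}), a map that is a one-sided inverse of a bijection is the two-sided inverse; therefore $[\Phi] \mapsto P_\Phi$ is exactly the inverse of the bijection in Corollary \ref{cor:main2}. Equivalently, starting from $R \in \text{\rm AON}_d(\F)$, one has $P_{\Phi_R} = R$: indeed $\Phi_R$ is built from $\{\Delta_{i,i}\}$ and $\{R\Delta_{i,i}R^{-1}\}$, and one can identify the $A_i$ of $\Phi_R$ directly and read off that the transition matrix from $\{\Delta_{i,i}\}$ to $\{A_i\}$ is $R$ itself, using that $R$ is normalized; this gives the other composite and re-proves injectivity of $R \mapsto [\Phi_R]$.

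The only real obstacle is the well-definedness/naturality bookkeeping in the first step — confirming that an isomorphism of idempotent systems transports the intermediate constructions ($\nu$, $m_i$, $\mathcal M$, and especially the defining relation of the $A_i$ from Lemma \ref{lem:Ai}) functorially, so that $P_\Phi$ depends only on $[\Phi]$. This is routine: each object is defined by an equation among the $E_i$, $E^*_i$ that any algebra isomorphism respects. With that in hand, everything else is a formal consequence of Corollary \ref{cor:canonical} together with the general principle that a one-sided inverse of a bijection is its inverse.
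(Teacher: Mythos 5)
Your proposal is correct and follows essentially the same route as the paper: the paper's entire proof is to invoke Corollary \ref{cor:canonical} (which gives $P_\Phi \in \text{\rm AON}_d(\F)$ and $[\Phi_{P_\Phi}]=[\Phi]$) and then use that a one-sided inverse of the bijection in Corollary \ref{cor:main2} must be its inverse. Your additional check that $P_\Phi$ depends only on the isomorphism class of $\Phi$ is a reasonable piece of bookkeeping that the paper leaves implicit.
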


\begin{proof}
By Corollary \ref{cor:canonical}.
\end{proof}

\begin{corollary}    \label{cor:uniquePhi}    \samepage
\ifDRAFT {\rm cor:uniquePhi}. \fi
Two symmetric idempotent systems over $\F$ are isomorphic
if and only if they have the same first eigenmatrix.
\end{corollary}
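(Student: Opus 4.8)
The plan is to deduce the statement immediately from the bijection of Corollary~\ref{cor:main2} and its explicit inverse in Proposition~\ref{prop:inverse}. Let $\Phi$ and $\Phi'$ denote symmetric idempotent systems over $\F$; by adjusting diameters we may assume both have diameter $d$, since systems of different diameters are neither isomorphic nor have eigenmatrices of the same size. Recall that the first eigenmatrix $P_\Phi$ is a well-defined element of $\text{\rm AON}_d(\F)$ by Corollary~\ref{cor:canonical}, and likewise for $P_{\Phi'}$.

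First I would recall that Corollary~\ref{cor:main2} gives a bijection $\text{\rm AON}_d(\F) \to \text{\rm SIS}_d(\F)$ sending $R \mapsto [\Phi_R]$, and that Proposition~\ref{prop:inverse} identifies the inverse of this bijection as the map $\text{\rm SIS}_d(\F) \to \text{\rm AON}_d(\F)$ sending $[\Phi] \mapsto P_\Phi$. In particular this map on $\text{\rm SIS}_d(\F)$ is well-defined on isomorphism classes and injective.

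Now suppose $\Phi$ and $\Phi'$ are isomorphic. Then $[\Phi] = [\Phi']$ in $\text{\rm SIS}_d(\F)$, so applying the inverse bijection gives $P_\Phi = P_{\Phi'}$. Conversely, suppose $P_\Phi = P_{\Phi'}$. Applying the bijection $R \mapsto [\Phi_R]$ of Corollary~\ref{cor:main2} to the common value $P := P_\Phi = P_{\Phi'}$ yields $[\Phi_P] = [\Phi_P]$, and since $[\Phi] = [\Phi_{P_\Phi}]$ and $[\Phi'] = [\Phi_{P_{\Phi'}}]$ by Corollary~\ref{cor:canonical} (which asserts $\Phi_{P_\Phi}$ is isomorphic to $\Phi$), we get $[\Phi] = [\Phi_P] = [\Phi']$; that is, $\Phi$ and $\Phi'$ are isomorphic. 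This completes the argument.

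There is essentially no obstacle here: the content has already been established in Corollaries~\ref{cor:main2} and~\ref{cor:canonical} and Proposition~\ref{prop:inverse}, and the corollary is just the statement that a bijection identifies two objects precisely when their images agree. The only point requiring a word of care is the tacit reduction to a common diameter, which follows because isomorphic algebras $\A$ have the same dimension and hence an idempotent system determines $d$.
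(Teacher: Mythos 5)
Your proposal is correct and follows essentially the same route as the paper, which proves the corollary by a direct appeal to Proposition~\ref{prop:inverse} (the fact that $[\Phi]\mapsto P_\Phi$ is the inverse of the bijection of Corollary~\ref{cor:main2}); you have merely spelled out the routine details that the paper leaves implicit.
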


\begin{proof}
By Proposition \ref{prop:inverse}.
\end{proof}

\section{Character algebras}
\label{sec:SCalgebra}

Our next goal is to explain how AO normalized solid invertible matrices and
symmetric idempotent systems are related to
character algebras.
Traditionally a character algebra is defined over the complex number field
\cite{BI, Kawada}.
In the present paper we define a character algebra over an arbitrary field.

\begin{defi}   {\rm (See \cite[Section II.2.5]{BI}.)}
\label{def:SCalgebra}    \samepage
\ifDRAFT {\rm def:SCalgebra}. \fi
By a {\em character algebra over $\F$ with diameter $d$} we mean a sequence
\[
  ({\mathcal C}; \{ x_i \}_{i=0}^d),
\]
where $\mathcal C$ is a commutative $\F$-algebra,
and $\{x_i\}_{i=0}^d$ are elements in $\mathcal C$
that satisfy the following {\rm (i)--(iv)}.
\begin{itemize}
\item[(i)]
$x_0 = 1$.
\item[(ii)]
$\{x_i\}_{i=0}^d$ is a basis of the vector space $\mathcal C$.
\item[(iii)]
Define scalars $p^h_{i j}$ $(0 \leq h,i,j \leq d)$  such that
\begin{align}
  x_i x_j &= \sum_{h=0}^d p^h_{i j} x_h  && (0 \leq i,j \leq d).    \label{eq:xixj}
\end{align}
Then there exist nonzero  scalars $\{k_i\}_{i=0}^d$  such that
\begin{align}
  p^0_{i j} &= \delta_{i,j} k_i  &&  (0 \leq i,j \leq d).       \label{eq:p0ij}
\end{align}
\item[\rm (iv)]
There exists an algebra homomorphism $\vphi : {\mathcal C} \to \F$
such that $\vphi(x_i)= k_i$ for $0 \leq i \leq d$.
\end{itemize}
For historical reasons, we call the scalars $p^h_{i j}$ the {\em intersection numbers}.
\end{defi}

We refer the reader to \cite{Arad, BI, Kawada, Blau, Blau2, Egge, Pascasio} for background information on
character algebras.

Next we discuss the notion of isomorphism for character algebras.
Consider two character algebras 
$({\mathcal C}; \{x_i\}_{i=0}^d)$ and $({\mathcal C}' ; \{x'_i\}_{i=0}^d)$ over  $\F$.
By an {\em isomorphism of character algebras from $({\mathcal C}; \{x_i\}_{i=0}^d)$
to $({\mathcal C}' ; \{x'_i\}_{i=0}^d)$} we mean an algebra isomorphism 
${\mathcal C} \to {\mathcal C}'$ that sends $x_i \mapsto x'_i$ for $0 \leq i \leq d$.
The character algebras $({\mathcal C}; \{x_i\}_{i=0}^d)$ and $({\mathcal C}' ; \{x'_i\}_{i=0}^d)$
are said to be {\em isomorphic} whenever there exists an isomorphism of character algebras
from $({\mathcal C}; \{x_i\}_{i=0}^d)$ to $({\mathcal C}' ; \{x'_i\}_{i=0}^d)$.

\begin{lemma}   \label{lem:iso}    \samepage
\ifDRAFT {\rm lem:iso}. \fi
Two character algebras over $\F$ are isomorphic
if and only if they have the same intersection numbers.
\end{lemma}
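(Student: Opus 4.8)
The plan is to prove the two directions of the equivalence. One direction is immediate from the definitions: if two character algebras over $\F$ are isomorphic, then an isomorphism of character algebras carries the basis $\{x_i\}_{i=0}^d$ of $\mathcal C$ to the basis $\{x'_i\}_{i=0}^d$ of $\mathcal C'$ and respects multiplication, so the structure constants in \eqref{eq:xixj} are preserved; hence the intersection numbers agree. The content is in the converse, and that is where I would concentrate the argument.

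For the converse, suppose $({\mathcal C}; \{x_i\}_{i=0}^d)$ and $({\mathcal C}'; \{x'_i\}_{i=0}^d)$ have the same intersection numbers $p^h_{i j}$. First I would define an $\F$-linear bijection $\gamma : {\mathcal C} \to {\mathcal C}'$ by $\gamma(x_i) = x'_i$ for $0 \leq i \leq d$; this is well defined and bijective because each of $\{x_i\}_{i=0}^d$, $\{x'_i\}_{i=0}^d$ is a basis (Definition \ref{def:SCalgebra}(ii)). It remains to check that $\gamma$ is an algebra homomorphism. On basis elements, $\gamma(x_i x_j) = \gamma\big(\sum_h p^h_{i j} x_h\big) = \sum_h p^h_{i j} x'_h = x'_i x'_j = \gamma(x_i)\gamma(x_j)$, using that the two algebras share the same $p^h_{i j}$. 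Since multiplication is bilinear and $\gamma$ is $\F$-linear, this extends from basis elements to all of $\mathcal C$. Finally $\gamma$ sends the identity to the identity: by Definition \ref{def:SCalgebra}(i) the identity of $\mathcal C$ is $x_0$ and the identity of $\mathcal C'$ is $x'_0$, and $\gamma(x_0) = x'_0$. Thus $\gamma$ is an isomorphism of character algebras, so the two character algebras are isomorphic.

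There is essentially no obstacle here; the statement is a routine ``structure constants determine the algebra up to isomorphism'' argument, and the only things to be a little careful about are that the $x_i$ genuinely form bases (so the linear map is well defined and invertible) and that the unit is built into the distinguished basis via condition (i), so preservation of the unit comes for free rather than needing a separate check. I would present the two directions in that order, stating the forward direction in a sentence and giving the short verification above for the converse.
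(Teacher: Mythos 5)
Your proof is correct and is exactly the argument the paper intends: its entire proof of Lemma \ref{lem:iso} is the one line ``Use \eqref{eq:xixj},'' which is precisely the routine structure-constants argument you spell out (forward direction by applying the isomorphism to \eqref{eq:xixj} and using that $\{x'_i\}_{i=0}^d$ is a basis, converse by defining $\gamma$ on the distinguished bases and checking multiplicativity). No further comment is needed.
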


\begin{proof}
Use \eqref{eq:xixj}.
\end{proof}

\begin{lemma}    \label{lem:Calgp}    \samepage
\ifDRAFT {\rm lem:Calgp}. \fi
Referring to the character algebra in Definition \ref{def:SCalgebra},
the intersection numbers  satisfy the following {\rm (i)--(iii)}:
\begin{itemize}
\item[\rm (i)]
$k_0 = 1$;
\item[\rm (ii)]
$p^h_{i 0}  = \delta_{h,i}$ for $0 \leq h,i \leq d$;
\item[\rm (iii)]
$p^h_{i j} = p^h_{j i}$ for $0 \leq h,i,j \leq d$.
\end{itemize}
\end{lemma}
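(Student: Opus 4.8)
The plan is to establish the three claimed identities about the intersection numbers $p^h_{ij}$ directly from the defining conditions (i)--(iv) in Definition \ref{def:SCalgebra}, using the associativity and commutativity of $\mathcal C$ together with the fact that $\{x_i\}_{i=0}^d$ is a basis.

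For (i), I would simply set $i=j=0$ in \eqref{eq:p0ij}, which gives $p^0_{0,0}=k_0$, and then use $x_0=1$: from \eqref{eq:xixj} with $i=j=0$ we get $x_0 x_0 = x_0 = \sum_h p^h_{0,0} x_h$, and since $x_0 = 1 \cdot x_0 + 0\cdot x_h$ ($h\neq 0$) and $\{x_h\}$ is a basis, we read off $p^0_{0,0}=1$ and $p^h_{0,0}=0$ for $h\neq 0$. Comparing, $k_0=1$.

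For (ii), I would use $x_0 = 1$ again: for each $i$, $x_i x_0 = x_i$, and expanding via \eqref{eq:xixj} gives $\sum_h p^h_{i,0} x_h = x_i$; comparing coefficients in the basis $\{x_h\}$ yields $p^h_{i,0}=\delta_{h,i}$. The same argument with the roles swapped (using commutativity $x_0 x_j = x_j x_0$) handles $p^h_{0,j}=\delta_{h,j}$ if needed, though only the stated form is required.

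For (iii), the symmetry $p^h_{ij}=p^h_{ji}$ follows immediately from commutativity of $\mathcal C$: since $x_i x_j = x_j x_i$, expanding both sides via \eqref{eq:xixj} and comparing coefficients in the basis $\{x_h\}$ gives $p^h_{ij}=p^h_{ji}$ for all $h,i,j$. I do not anticipate any real obstacle here; the only mild subtlety is being careful that one invokes the basis property of $\{x_i\}_{i=0}^d$ (condition (ii) of Definition \ref{def:SCalgebra}) to justify comparing coefficients, and that condition (i), $x_0=1$, is what makes parts (i) and (ii) work. Condition (iv) (the homomorphism $\varphi$) is not needed for this lemma. The whole proof is short; it could reasonably be written as ``Use \eqref{eq:xixj}, $x_0 = 1$, and the commutativity of $\mathcal{C}$.''
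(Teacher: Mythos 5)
Your proposal is correct and matches the paper's proof, which simply cites $x_0=1$ for parts (i) and (ii) and the commutativity of $\mathcal C$ for part (iii); you have only spelled out the coefficient comparisons in the basis $\{x_h\}_{h=0}^d$ that the paper leaves implicit.
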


\begin{proof}
(i), (ii)
Since $x_0 = 1$.

(iii)
Since $\mathcal C$ is commutative.
\end{proof}

As an illustration, we describe the character algebras of diameter $d=1$.

\begin{lemma}    \label{lem:ex1}   \samepage
\ifDRAFT {\rm lem:ex1}. \fi
For a character algebra $({\mathcal C}; \{x_i\}_{i=0}^1)$ over $\F$,
the intersection numbers satisfy the following:
\begin{itemize}
\item[\rm (i)]
$p^1_{1 1} = k_1 -1$;
\item[\rm (ii)]
$x_1^2 = k_1 x_0 + (k_1 -1) x_1$;
\item[\rm (iii)]
$e^2 = (k_1 + 1)e$, where $e=x_0 + x_1$.
\end{itemize}
\end{lemma}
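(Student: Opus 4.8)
The plan is to work directly with the defining relations of a diameter $1$ character algebra, extracting everything from \eqref{eq:xixj} with $d=1$ together with Lemma \ref{lem:Calgp}. First I would write out the single nontrivial product $x_1 x_1 = p^0_{1 1} x_0 + p^1_{1 1} x_1$. By \eqref{eq:p0ij} we have $p^0_{1 1} = k_1$, so the only unknown intersection number is $p^1_{1 1}$. To pin it down I would apply the algebra homomorphism $\vphi : {\mathcal C} \to \F$ of Definition \ref{def:SCalgebra}(iv), which sends $x_i \mapsto k_i$. Applying $\vphi$ to the relation $x_1^2 = k_1 x_0 + p^1_{1 1} x_1$ and using $\vphi(x_0) = k_0 = 1$ (Lemma \ref{lem:Calgp}(i)) and $\vphi(x_1) = k_1$ gives $k_1^2 = k_1 + p^1_{1 1} k_1$; since $k_1 \neq 0$ we may divide to obtain $p^1_{1 1} = k_1 - 1$. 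This is part (i).

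Part (ii) is then immediate: substitute $p^0_{1 1} = k_1$ and $p^1_{1 1} = k_1 - 1$ back into $x_1 x_1 = p^0_{1 1} x_0 + p^1_{1 1} x_1$. For part (iii), with $e = x_0 + x_1$ I would simply expand $e^2 = x_0^2 + 2 x_0 x_1 + x_1^2$, use $x_0 = 1$ so that $x_0^2 = x_0$ and $x_0 x_1 = x_1$, and then plug in the formula for $x_1^2$ from part (ii). This yields $e^2 = x_0 + 2 x_1 + k_1 x_0 + (k_1 - 1) x_1 = (k_1 + 1) x_0 + (k_1 + 1) x_1 = (k_1 + 1) e$, as claimed.

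I do not expect any serious obstacle here; the content is a short direct computation. The one point that requires a little care is the use of condition (iv) of Definition \ref{def:SCalgebra} — it is what forces $p^1_{1 1} = k_1 - 1$ rather than leaving it free, so I would make sure to invoke $\vphi$ explicitly rather than trying to derive (i) from the product relations and commutativity alone (those would not suffice). An alternative route for (i) avoiding $\vphi$ would be to note that $e = x_0 + x_1 = \sum_i x_i$ is always an eigenvector-like element and compute its square both ways, but invoking $\vphi$ is cleaner and is the intended argument. Everything else is bookkeeping with $x_0 = 1$ and $k_0 = 1$.
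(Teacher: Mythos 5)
Your proposal is correct and follows essentially the same route as the paper: both derive $x_1^2 = k_1 x_0 + p^1_{11}x_1$ from \eqref{eq:xixj} and \eqref{eq:p0ij}, apply $\vphi$ and divide by $k_1 \neq 0$ to obtain (i), then get (ii) by substitution and (iii) by expanding $e^2$. No differences worth noting.
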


\begin{proof}
By \eqref{eq:p0ij}, $p^0_{1 1} = k_1$.
By this and \eqref{eq:xixj},
\begin{equation}
 x_1^2 = k_1 x_0 + p^1_{1 1} x_1.    \label{eq:x12b}
\end{equation}
In this equation, apply $\vphi$ to each side to get
$k_1^2 = k_1 k_0 + p^1_{1 1} k_1$.
By this and since $k_0 = 1$, $k_1 \neq 0$ we get (i). 
By (i) and \eqref{eq:x12b} we get (ii). 
Using (ii) we get (iii).
\end{proof}

In the next result, we classify up to isomorphism the character algebras with diameter $d=1$.

\begin{prop}   \label{prop:ex2}    \samepage
\ifDRAFT {\rm prop:ex2}. \fi
Let $0 \neq k \in \F$.
Then up to isomorphism there exists a unique character algebra over $\F$
that has diameter $d=1$ and $k_1 = k$.
\end{prop}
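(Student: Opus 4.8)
The plan is to prove existence and uniqueness separately, using Lemma \ref{lem:ex1} to reduce everything to explicit formulas in the single parameter $k_1$.

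For existence, I would construct a concrete character algebra with $k_1 = k$. Take $\mathcal{C}$ to be the commutative $\F$-algebra with basis $\{x_0, x_1\}$, where $x_0 = 1$ and multiplication is determined by the rule $x_1^2 = k x_0 + (k-1) x_1$ (associativity is automatic since $\mathcal{C}$ is generated by the single element $x_1$, or one checks it directly on the basis). Then I must verify the four conditions of Definition \ref{def:SCalgebra}. Conditions (i) and (ii) are immediate. For (iii), the intersection numbers are $p^0_{1,1} = k$ and $p^1_{1,1} = k-1$, together with the trivial ones from $x_0 = 1$; choosing $k_0 = 1$ and $k_1 = k$ gives $p^0_{i,j} = \delta_{i,j} k_i$ as required, and $k_1 \neq 0$ by hypothesis. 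For (iv), I would define $\vphi : \mathcal{C} \to \F$ on the basis by $\vphi(x_0) = 1$, $\vphi(x_1) = k$, and check it respects multiplication; the only nontrivial check is $\vphi(x_1)^2 = \vphi(x_1^2)$, i.e. $k^2 = k \cdot 1 + (k-1) k$, which holds. This establishes existence.

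For uniqueness, suppose $({\mathcal C}; \{x_i\}_{i=0}^1)$ is any character algebra over $\F$ with diameter $1$ and $k_1 = k$. By Lemma \ref{lem:Calgp}(i), $k_0 = 1$ is forced. By Lemma \ref{lem:ex1}(i), $p^1_{1,1} = k_1 - 1 = k-1$ is forced, and by Lemma \ref{lem:Calgp}(ii) the remaining intersection numbers $p^h_{i,0}$ and $p^h_{0,j}$ are forced to be $\delta_{h,i}$ and $\delta_{h,j}$ respectively, while $p^0_{1,1} = k_1 = k$ by \eqref{eq:p0ij}. Hence all intersection numbers are completely determined by $k$. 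By Lemma \ref{lem:iso}, any two character algebras with the same intersection numbers are isomorphic, so the character algebra with $k_1 = k$ is unique up to isomorphism.

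I do not anticipate a serious obstacle here; the main point of care is the existence half — specifically, confirming that the multiplication rule $x_1^2 = k x_0 + (k-1) x_1$ genuinely yields an associative unital commutative algebra and that the homomorphism $\vphi$ is well defined. Both reduce to the single polynomial identity $k^2 = k + (k-1)k$, which is the consistency condition already extracted in the proof of Lemma \ref{lem:ex1}. Everything else is bookkeeping via the lemmas already proved.
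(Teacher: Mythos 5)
Your proof is correct and follows essentially the same route as the paper: uniqueness by observing that all intersection numbers are forced by $k_1$ (via Lemma \ref{lem:Calgp} and Lemma \ref{lem:ex1}(i)) and invoking Lemma \ref{lem:iso}, and existence via the two-dimensional commutative algebra with $x_0=1$ and $x_1^2 = k x_0 + (k-1)x_1$. The only cosmetic difference is that the paper realizes this algebra as the quotient $\F[x]/\mathcal{I}$ with $\mathcal{I}=((x+1)(x-k))$, which makes associativity and the well-definedness of $\vphi$ (evaluation at the root $k$) automatic, whereas you verify these identities by hand.
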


\begin{proof}
First we show the uniqueness.
By \eqref{eq:p0ij} and Lemma \ref{lem:Calgp},
we find that all the intersection numbers are uniquely determined 
by $k_1$ and $p^1_{1 1}$.
By Lemma \ref{lem:ex1}(i), $p^1_{1 1}$ is determined by $k_1$.
By these comments, all the intersection numbers are determined by $k_1$.
Now the uniqueness follows by Lemma \ref{lem:iso}.
Next we show the existence.
Consider the quotient algebra ${\mathcal C} = \F[x]/{\mathcal I}$,
where $\F[x]$ is the $\F$-algebra of polynomials in a variable $x$, and
$\mathcal I$ is the ideal of $\F[x]$ generated by $(x+1)(x-k)$.
Define $x_0 = 1 + {\mathcal I}$ and $x_1 = x + {\mathcal I}$.
We show that $({\mathcal C}; \{x_i\}_{i=0}^1)$ is a character algebra.
To do this, we verify conditions (i)--(iv) in Definition \ref{def:SCalgebra}.
Condition (i) holds since by construction $x_0$ is the multiplicative identity
in $\mathcal C$.
Condition (ii) holds since by construction $x_0$, $x_1$ form a basis for $\mathcal C$.
Now consider condition (iii).
We just mentioned that $x_0$ is the identity in $\mathcal C$.
By construction $x_1^2 = k x_0 + (k-1) x_1$.
By these comments,
\begin{align*}
p^0_{0 0} &= 1,   &  p^0_{0 1} &= p^0_{1 0} = 0,  & p^0_{1 1} &= k.
\end{align*}
Thus  \eqref{eq:p0ij} holds with  $k_0 = 1$, $k_1 = k$.
We have verified condition (iii).
Concerning condition (iv),
note that $k$ is a root of the polynomial $(x+1)(x-k)$,
so there exists an algebra homomorphism
$\vphi: {\mathcal C} \to \F$ that sends $x_1$ to $k$.
The map $\vphi$ satisfies the requirements of condition (iv).
We have shown that $({\mathcal C}; \{x_i\}_{i=0}^1)$ is a character algebra.
By construction this character algebra has $k_1 = k$.
\end{proof}

\section{Semisimple character algebras and character systems}
\label{sec:Csystem}

In this section we discuss a type of character algebra, said to be semisimple.
Motivated by this type of character algebra, we introduce the notion of a 
character system.

\begin{defi}   \label{def:ss}    \samepage
\ifDRAFT {\rm def:ss}. \fi
A character algebra $({\mathcal C}; \{ x_i \}_{i=0}^d)$ is said to be {\em semisimple}
whenever there exists a basis  $\{e_i\}_{i=0}^d$ of the vector space $\mathcal C$ such that
$e_i e_j = \delta_{i,j} e_i$ $(0 \leq i,j \leq d)$ and $1 = \sum_{i=0}^d e_i$.
In this case, $\{e_i\}_{i=0}^d$ are unique up to permutation, and called the {\em primitive idempotents}
of $\mathcal C$.
\end{defi}

We are mainly interested in the semisimple character algebras.

\begin{note}   \label{note:BI}    \samepage
\ifDRAFT {\rm note:BI}. \fi
Consider a character algebra from Definition \ref{def:SCalgebra}.
In \cite[Section II.2.5]{BI}, the intersection numbers are assumed to be real
with $k_i > 0$ $(0 \leq i \leq d)$,
and under this assumption, it is shown that the character algebra is semisimple \cite[Proposition II.5.4]{BI}.
\end{note}

\begin{lemma}   \label{lem:Cei}   \samepage
\ifDRAFT {\rm lem:Cei}. \fi
Let $({\mathcal C}; \{x_i\}_{i=0}^d)$ denote a semisimple character algebra over $\F$.
Then there exists a unique primitive idempotent of ${\mathcal C}$
that is not sent to zero by $\vphi$.
This primitive idempotent is sent to $1$ by $\vphi$.
\end{lemma}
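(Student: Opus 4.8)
The plan is to exploit the fact that $\varphi$ is an algebra homomorphism into a field, so it must map idempotents to idempotents, and a field has only the idempotents $0$ and $1$. First I would record that $\varphi(1)=1$: since $x_0=1$ by Definition \ref{def:SCalgebra}(i) and $\varphi(x_0)=k_0$ by condition (iv), and $k_0=1$ by Lemma \ref{lem:Calgp}(i), we get $\varphi(1)=1$.

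Next, for each $i$ with $0 \le i \le d$, apply $\varphi$ to $e_i e_i = e_i$ to obtain $\varphi(e_i)^2 = \varphi(e_i)$ in $\F$. Because $\F$ is a field, its only idempotents are $0$ and $1$, so $\varphi(e_i) \in \{0,1\}$ for each $i$. Now apply $\varphi$ to $1 = \sum_{i=0}^d e_i$ (Definition \ref{def:ss}) to get $1 = \varphi(1) = \sum_{i=0}^d \varphi(e_i)$. Since each summand lies in $\{0,1\}$, exactly one index $i$ has $\varphi(e_i)=1$ and all others have $\varphi(e_i)=0$. This gives both the existence and the uniqueness of a primitive idempotent not sent to zero by $\varphi$, and that idempotent is sent to $1$. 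Finally I would note that "the" primitive idempotent is well defined because, by Definition \ref{def:ss}, $\{e_i\}_{i=0}^d$ are unique up to permutation, and the property of being sent to $1$ by $\varphi$ is permutation-invariant.

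There is essentially no obstacle here; the only point requiring a moment's care is justifying $\varphi(1)=1$ rather than assuming it outright, which is handled via $x_0=1$ and $k_0=1$ as above, and noting that $\F$ being a field forbids idempotents other than $0$ and $1$.
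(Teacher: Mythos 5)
Your existence argument and the observation that $\vphi(e_i)\in\{0,1\}$ are fine, but the uniqueness step has a genuine gap. From $1=\sum_{i=0}^d \vphi(e_i)$ with each summand in $\{0,1\}$ you conclude that exactly one summand equals $1$. That inference is only valid in characteristic $0$ (or when $d+1$ is small relative to the characteristic). The paper fixes an arbitrary field $\F$ and explicitly defines character algebras over arbitrary fields, so positive characteristic is in play: if $\operatorname{char}\F = p$, the equation $1=\sum_{i=0}^d\vphi(e_i)$ only forces the number of indices with $\vphi(e_i)=1$ to be congruent to $1$ modulo $p$. For instance, over $\F_2$ with $d=2$, having all three values equal to $1$ is consistent with the sum being $1$, so your argument does not rule out several primitive idempotents surviving $\vphi$.

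The repair is short and is exactly what the paper does: use the orthogonality of the primitive idempotents rather than the sum. If $e$ and $f$ are distinct primitive idempotents then $ef=0$, so $\vphi(e)\vphi(f)=0$; hence at most one of them can have nonzero image under $\vphi$. Combined with your existence argument (the sum of the $e_i$ is $1$ and $\vphi(1)=1\neq 0$, so some $\vphi(e_i)\neq 0$) and your observation that the surviving value is an idempotent of $\F$ and hence equals $1$, this gives the full statement over any field. Your justification of $\vphi(1)=1$ via $x_0=1$ and $k_0=1$ is correct, though it is also immediate from $\vphi$ being a homomorphism of unital algebras.
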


\begin{proof}
Concerning the existence, observe that the sum of the primitive idempotents of $\mathcal C$
is equal to the multiplicative identity of $\mathcal C$,
and that $\vphi$ sends this identity to $1$.
Concerning the uniqueness, 
let $e_0$ denote a primitive idempotent of ${\mathcal C}$ such that $\vphi (e_0) \neq 0$.
Pick any other primitive idempotent $f$ in $\mathcal C$.
We have $e_0 f = 0$, so
$0 = \vphi (e_0 f) = \vphi (e_0) \vphi (f)$.
By this and  $\vphi (e_0) \neq 0$ we get $\vphi (f) = 0$.
We have shown the uniqueness.
The last assertion of the lemma statement follows from our above remarks.
\end{proof}

\begin{defi}    \label{def:e0}    \samepage
\ifDRAFT {\rm def:e0}. \fi
Referring to Lemma \ref{lem:Cei},
let $e_0$ denote the unique primitive idempotent of $\mathcal C$
that is not sent to $0$ by $\vphi$.
The primitive idempotent $e_0$ is said to be {\em trivial}.
Note that $\vphi (e_0) = 1$.
\end{defi}

In Proposition \ref{prop:ex2} we described the character algebras with diameter $d=1$.
In the next result we determine which of these character algebras is semisimple.

\begin{lemma}    \label{lem:ex3}    \samepage
\ifDRAFT {\rm lem:ex3}. \fi
Let $({\mathcal C}, \{x_i\}_{i=0}^1)$ denote the character algebra from Proposition \ref{prop:ex2}.
Then $({\mathcal C}, \{x_i\}_{i=0}^1)$ is semisimple if and only if $k \neq -1$.
In this case the primitive idempotents satisfy
\begin{align}
 x_0 &= e_0 + e_1, &
 x_1 &= k e_0 - e_1,                                      \label{eq:x0x1}
\\
 e_0 &= \frac{ x_0 + x_1} { k+1 }, &
 e_1 &= \frac{ k x_0 - x_1} {k+1}.                    \label{eq:e0e1}
\end{align}
\end{lemma}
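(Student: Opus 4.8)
The plan is to work directly with the explicit character algebra $\mathcal{C} = \F[x]/\mathcal{I}$ from Proposition \ref{prop:ex2}, where $\mathcal{I}$ is generated by $(x+1)(x-k)$, and to use the structure of this quotient together with the Chinese Remainder Theorem. First I would prove the ``if'' direction: assuming $k \neq -1$, the two linear factors $x+1$ and $x-k$ are coprime in $\F[x]$ (their difference is the nonzero constant $k+1$), so by CRT we have $\mathcal{C} \cong \F[x]/(x+1) \times \F[x]/(x-k) \cong \F \times \F$. This decomposition produces two nonzero orthogonal idempotents summing to $1$; pulling them back to $\mathcal{C}$ in terms of $x_0 = 1+\mathcal{I}$ and $x_1 = x+\mathcal{I}$ gives the candidates
\[
  e_0 = \frac{x_0 + x_1}{k+1}, \qquad e_1 = \frac{k x_0 - x_1}{k+1},
\]
which makes sense precisely because $k+1 \neq 0$. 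I would then verify by direct computation, using the defining relation $x_1^2 = k x_0 + (k-1) x_1$ from Lemma \ref{lem:ex1}(ii), that $e_i e_j = \delta_{i,j} e_i$ and $e_0 + e_1 = x_0 = 1$; solving the linear system (\ref{eq:e0e1}) for $x_0, x_1$ immediately yields (\ref{eq:x0x1}). Since $\{e_0, e_1\}$ is obtained from the basis $\{x_0, x_1\}$ by an invertible linear change of coordinates (the matrix has determinant $-(k+1)/(k+1)^2 \neq 0$), it is a basis of $\mathcal{C}$, so $\mathcal{C}$ is semisimple by Definition \ref{def:ss}.

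For the ``only if'' direction, I would argue the contrapositive: suppose $k = -1$, so $\mathcal{C} = \F[x]/((x+1)^2)$ and $x_1^2 = -x_0 - 2x_1$, i.e.\ $(x_1 + x_0)^2 = 0$ in $\mathcal{C}$, equivalently $(x_1+1)^2 = 0$. Thus $\mathcal{C}$ contains the nonzero nilpotent element $t = x_1 + x_0$ with $t^2 = 0$. If $\mathcal{C}$ were semisimple in the sense of Definition \ref{def:ss}, then writing $t = \alpha e_0 + \beta e_1$ in the idempotent basis we would get $t^2 = \alpha^2 e_0 + \beta^2 e_1 = 0$, forcing $\alpha = \beta = 0$ and hence $t = 0$, a contradiction. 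So $\mathcal{C}$ is not semisimple when $k = -1$.

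The main obstacle I anticipate is not conceptual but bookkeeping: one must be careful that the claimed formulas for $e_0, e_1$ are exactly the ones that come out of CRT (with the correct normalization by $k+1$) and that the orthogonality and idempotency checks use only the single relation $x_1^2 = k x_0 + (k-1)x_1$ guaranteed by Lemma \ref{lem:ex1}(ii); everything else is forced by $x_0$ being the identity. A minor subtlety worth a sentence: when $\mathrm{char}(\F) = 2$ and $k = -1 = 1$, the case $k=-1$ is still the excluded one and the argument above (nilpotent $x_1 + x_0$) still applies verbatim, so no separate characteristic-two analysis is needed. I would also remark that semisimplicity here matches the abstract notion, so the primitive idempotents are unique up to the labeling, and $e_0$ as written is the trivial one in the sense of Definition \ref{def:e0} since $\vphi(e_0) = (\vphi(x_0) + \vphi(x_1))/(k+1) = (1+k)/(k+1) = 1$.
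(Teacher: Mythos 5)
Your proof is correct and follows essentially the same route as the paper: for $k\neq -1$ you define $e_0,e_1$ by \eqref{eq:e0e1} and verify the idempotent relations directly from $x_1^2=kx_0+(k-1)x_1$, and for $k=-1$ you use the same nilpotent element $x_0+x_1$ and the same coordinate argument in the idempotent basis to reach a contradiction. The Chinese Remainder Theorem framing is pleasant extra motivation for where \eqref{eq:e0e1} comes from, but the substance of the verification coincides with the paper's.
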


\begin{proof}
To prove the lemma in one direction,
we assume that $k=-1$ and show that $({\mathcal C}; \{x_i\}_{i=0}^1)$  is not semisimple.
To show this,
we assume that $({\mathcal C}; \{x_i\}_{i=0}^1)$ is semisimple and get a contradiction.
By Lemma \ref{lem:ex1}(iii) we have $e^2 = 0$, where $e = x_0 + x_1$.
Write $e = \alpha_0 e_0 + \alpha_1 e_1$, where $\alpha_0$, $\alpha_1 \in \F$ and
$e_0$, $e_1$ are the primitive idempotents of $\mathcal C$.
Using $e^2=0$ we find $\alpha_0^2 = 0$ and $\alpha_1^2 = 0$,
forcing $\alpha_0 = \alpha_1 = 0$ so $e=0$, contradicting the fact
that $x_0$, $x_1$ are linearly independent.
We have proved the lemma in one direction.
To prove the lemma in the other direction,
we assume that $k \neq -1$ and show that $({\mathcal C}; \{x_i\}_{i=0}^1)$ is semisimple.
Define elements $\{e_i\}_{i=0}^1$ by \eqref{eq:e0e1}.
Using Lemma \ref{lem:ex1} with $k_1=k$ we find that $e_0$, $e_1$ form a basis for $\mathcal C$
such that 
\begin{align*}
  e_0 + e_1 &= 1,  &
  e_0^2 &= e_0, &
  e_0 e_1 &= 0, &
  e_1^2 &= e_1.
\end{align*}
By these comments $({\mathcal C}; \{x_i\}_{i=0}^1)$ is semisimple with primitive idempotents $e_0$, $e_1$.
Line \eqref{eq:x0x1} is obtained from \eqref{eq:e0e1}.
\end{proof}

\begin{defi}    \label{def:SCsystem}    \samepage
\ifDRAFT {\rm def:SCsystem}. \fi
By a {\em character system over $\F$ with diameter $d$}, we mean a sequence
\[
   \Psi =  ( {\mathcal C}; \{x_i\}_{i=0}^d; \{e_i\}_{i=0}^d),
\]
where $( {\mathcal C}; \{x_i\}_{i=0}^d)$ is a semisimple character algebra over $\F$,
and $\{e_i\}_{i=0}^d$ are the primitive idempotents of $\mathcal C$ with $e_0$ trivial.
We say that $( {\mathcal C}; \{x_i\}_{i=0}^d)$ and $\Psi$ are {\em associated}.
\end{defi}

Next we discuss the notion of isomorphism for character systems.
Suppose we are given two character systems over $\F$,
denoted $\Psi =  ( {\mathcal C}; \{x_i\}_{i=0}^d; \{e_i\}_{i=0}^d)$
and $\Psi' =  ( {\mathcal C}'; \{x'_i\}_{i=0}^d; \{e'_i\}_{i=0}^d)$.
By an {\em isomorphism of character systems} from $\Psi$ to $\Psi'$
we mean an algebra isomorphism ${\mathcal C} \to {\mathcal C}'$ 
that sends $x_i \mapsto x'_i$ and $e_i \mapsto e'_i$ for
$0 \leq i \leq d$.
We say $\Psi$ and $\Psi'$ are {\em isomorphic} whenever
there exists an isomorphism of character systems from $\Psi$ to $\Psi'$.

\begin{defi}     \label{def:[Psi]}    \samepage
\ifDRAFT {\rm def:[Psi]}. \fi
Referring to the character system $\Psi$ in Definition \ref{def:SCsystem},
let $[\Psi]$ denote the isomorphism class that contains $\Psi$.
\end{defi}

\begin{defi}    \label{def:CS}    \samepage
\ifDRAFT {\rm def:CS}. \fi
Let $\text{\rm CS}_d(\F)$ denote the set consisting of
the isomorphism classes of character systems over $\F$ with diameter $d$.
\end{defi}

\begin{defi}  {\rm (See \cite[p.\ 90]{BI}.) }
\label{def:P2}    \samepage
\ifDRAFT {\rm def:P2}. \fi
Let $\Psi =  ( {\mathcal C}; \{x_i\}_{i=0}^d; \{e_i\}_{i=0}^d)$ denote a character system over $\F$.
Let $P = P_\Psi$ denote the transition matrix from the basis $\{e_i\}_{i=0}^d$ of $\mathcal C$
to the basis $\{x_i\}_{i=0}^d$ of $\mathcal C$.
We call $P$ the {\em eigenmatrix of $\Psi$}.
\end{defi}

\begin{lemma}    \label{lem:d1P}    \samepage
\ifDRAFT {\rm lem:d1P}. \fi
For the character system associated with the semisimple character algebra in Lemma \ref{lem:ex3},
the eigenmatrix $P$ satisfies
\begin{align*}
 P &= \begin{pmatrix}
        1 & k  \\
        1 & -1
       \end{pmatrix},
&
 P^{-1}  &= \frac{1} {k+1} 
  \begin{pmatrix}
        1 & k  \\
        1 & -1
       \end{pmatrix}.
\end{align*}
\end{lemma}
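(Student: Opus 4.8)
The statement to prove is Lemma~\ref{lem:d1P}: for the character system associated with the diameter-$1$ semisimple character algebra from Lemma~\ref{lem:ex3}, the eigenmatrix $P$ and its inverse have the displayed form.

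\medskip\noindent\textbf{Proof plan.} The plan is to read off $P$ directly from its definition (Definition~\ref{def:P2}) as the transition matrix from the basis $\{e_0,e_1\}$ of primitive idempotents to the basis $\{x_0,x_1\}$, using the explicit formulas in Lemma~\ref{lem:ex3}. First I would recall that, by Definition~\ref{def:P2}, the entries of $P$ are determined by $x_j = \sum_{i=0}^1 P_{i,j} e_i$ for $j=0,1$. Then I would invoke equation~\eqref{eq:x0x1} from Lemma~\ref{lem:ex3}, namely $x_0 = e_0 + e_1$ and $x_1 = k e_0 - e_1$, which immediately gives the columns of $P$: column $0$ is $(1,1)^{\sf t}$ and column $1$ is $(k,-1)^{\sf t}$, so $P$ has the claimed form. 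For $P^{-1}$, I would note that $P^{-1}$ is the transition matrix in the other direction, i.e.\ $e_j = \sum_{i=0}^1 (P^{-1})_{i,j} x_i$, and read this off from equation~\eqref{eq:e0e1}: $e_0 = (x_0 + x_1)/(k+1)$ and $e_1 = (k x_0 - x_1)/(k+1)$, giving the displayed $P^{-1}$. Alternatively, since both matrices are already written down, one can simply verify $P P^{-1} = I$ by a one-line $2\times 2$ multiplication, which also confirms consistency; I would mention that $k\neq -1$ (guaranteed by semisimplicity in Lemma~\ref{lem:ex3}) is exactly what makes the $1/(k+1)$ factor legitimate.

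\medskip\noindent\textbf{Main obstacle.} There is essentially no obstacle here: the lemma is a direct transcription of the formulas already established in Lemma~\ref{lem:ex3} into the matrix language of Definition~\ref{def:P2}. The only point requiring a word of care is the orientation convention for the transition matrix (which basis indexes rows versus columns), so the proof should explicitly match the indexing of Definition~\ref{def:P2} against lines~\eqref{eq:x0x1} and~\eqref{eq:e0e1} to be sure the columns of $P$ are the $\{x_j\}$ expressed in the $\{e_i\}$ and not the transpose. After that, the computation of $P^{-1}$ is either read off from~\eqref{eq:e0e1} or checked by a trivial matrix product, and the proof is complete.
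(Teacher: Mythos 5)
Your proposal is correct and follows exactly the paper's route: the paper's entire proof is ``By \eqref{eq:x0x1} and \eqref{eq:e0e1},'' i.e.\ reading the columns of $P$ and $P^{-1}$ off from those two lines via the transition-matrix convention of Definition \ref{def:P2}. Your added care about the row/column orientation and the remark that $k\neq -1$ justifies the factor $1/(k+1)$ are both consistent with the paper.
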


\begin{proof}
By \eqref{eq:x0x1} and \eqref{eq:e0e1}.
\end{proof}

\begin{prop}    \label{prop:CalgP}    \samepage
\ifDRAFT {\rm prop:CalgP}. \fi
Two character systems over $\F$ are isomorphic if and only if they have
the same eigenmatrix.
\end{prop}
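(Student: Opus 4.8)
The plan is to prove the two directions separately. For the forward direction, suppose $\Psi = ({\mathcal C}; \{x_i\}_{i=0}^d; \{e_i\}_{i=0}^d)$ and $\Psi' = ({\mathcal C}'; \{x'_i\}_{i=0}^d; \{e'_i\}_{i=0}^d)$ are isomorphic via an algebra isomorphism $\gamma : {\mathcal C} \to {\mathcal C}'$ with $x_i^\gamma = x'_i$ and $e_i^\gamma = e'_i$ for $0 \leq i \leq d$. Since $\gamma$ is $\F$-linear and carries one pair of bases to the other pair, the transition matrix from $\{e_i\}_{i=0}^d$ to $\{x_i\}_{i=0}^d$ coincides with the transition matrix from $\{e'_i\}_{i=0}^d$ to $\{x'_i\}_{i=0}^d$. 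Hence $P_\Psi = P_{\Psi'}$. This direction is essentially immediate.

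For the converse, suppose $P_\Psi = P_{\Psi'} =: P$. First I would use the definition of the eigenmatrix to write $x_j = \sum_{i=0}^d P_{i,j} e_i$ and $x'_j = \sum_{i=0}^d P_{i,j} e'_i$ for $0 \leq j \leq d$. Since $\{e_i\}_{i=0}^d$ and $\{e'_i\}_{i=0}^d$ are both bases, there is a unique $\F$-linear bijection $\gamma : {\mathcal C} \to {\mathcal C}'$ with $e_i^\gamma = e'_i$ for $0 \leq i \leq d$; by the displayed formulas this same $\gamma$ satisfies $x_j^\gamma = x'_j$ for $0 \leq j \leq d$. It remains to check that $\gamma$ is an algebra homomorphism. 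For this I would use that the primitive idempotents multiply by $e_i e_j = \delta_{i,j} e_i$ in ${\mathcal C}$ and $e'_i e'_j = \delta_{i,j} e'_i$ in ${\mathcal C}'$ (Definition \ref{def:ss}), so $\gamma$ respects products of basis elements and hence, by bilinearity, all products. Finally, $\gamma$ sends the multiplicative identity $1 = \sum_{i=0}^d e_i$ of ${\mathcal C}$ to $\sum_{i=0}^d e'_i = 1$ in ${\mathcal C}'$. Thus $\gamma$ is an algebra isomorphism carrying $\Psi$ to $\Psi'$, so the two character systems are isomorphic.

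The only subtlety — and the one place a little care is needed — is making sure $e_0$ being designated trivial in each system is automatically respected by $\gamma$; but this is free, since $\gamma$ maps $e_0$ to $e'_0$ by construction (the $i=0$ case of $e_i^\gamma = e'_i$), and the trivial idempotent is just a distinguished member of the ordered list $\{e_i\}_{i=0}^d$. So no extra argument is required. I expect no genuine obstacle here: the proposition is a structural bookkeeping fact, and the main thing to get right is the careful statement that equality of transition matrices, together with the diagonal multiplication rule for primitive idempotents, forces the linear map built from the bases to be multiplicative.
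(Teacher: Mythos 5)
Your proposal is correct and follows essentially the same route as the paper: the forward direction is immediate from the definition of the eigenmatrix, and for the converse one defines the linear map $\gamma$ by $e_i \mapsto e'_i$, observes that it is an algebra isomorphism via the orthogonal-idempotent relations, and uses equality of the transition matrices to see that $\gamma$ sends $x_i \mapsto x'_i$. The extra details you supply (multiplicativity on basis elements, preservation of the identity, compatibility with the choice of trivial idempotent) are exactly the routine verifications the paper leaves implicit.
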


\begin{proof}
Let $\Psi$, $\Psi'$ denote the character systems in question.
First assume that $\Psi$ and $\Psi'$ are isomorphic.
Then $\Psi$ and $\Psi'$ have the same eigenmatrix by Definition \ref{def:P2}.
Next assume that $\Psi$ and $\Psi'$ have the same eigenmatrix.
Then $\Psi$ and $\Psi$ have the same diameter.
Write
$\Psi = ( {\mathcal C}; \{x_i\}_{i=0}^d; \{e_i\}_{i=0}^d)$
and $\Psi' =  ( {\mathcal C}'; \{x'_i\}_{i=0}^d; \{e'_i\}_{i=0}^d)$.
Consider the $\F$-linear map $\gamma : {\mathcal C} \to {\mathcal C}'$ that 
sends $e_i \mapsto e'_i$ for $0 \leq i \leq d$.
Then $\gamma$ is an algebra isomorphism.
By Definition \ref{def:P2},
$\gamma$ sends $x_i \mapsto x'_i$ for $0 \leq i \leq d$.
Thus $\gamma$ is an isomorphism of character systems from $\Psi$ to $\Psi'$.
Therefore the character systems $\Psi$ and $\Psi'$ are isomorphic.
\end{proof}

\begin{lemma}    \label{lem:xie02}    \samepage
\ifDRAFT {\rm lem:xie02}.  \fi
Let $\Psi =  ( {\mathcal C}; \{x_i\}_{i=0}^d; \{e_i\}_{i=0}^d)$ denote a character system over $\F$.
Then the eigenmatrix $P = P_\Psi$ satisfies the following {\rm (i), (ii)} for $0 \leq i,j \leq d$:
\begin{itemize}
\item[\rm (i)]
$P_{i, 0} = 1$;
\item[\rm (ii)]
$P_{0,j} = k_j$.
\end{itemize}
\end{lemma}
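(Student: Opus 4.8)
The plan is to read off both entries directly from the defining relation of the eigenmatrix, namely $x_j = \sum_{i=0}^d P_{i,j} e_i$ for $0 \le j \le d$ (Definition \ref{def:P2}), using two standard facts about the primitive idempotents: that they sum to the multiplicative identity of $\mathcal C$, and that the algebra homomorphism $\vphi$ separates the trivial idempotent $e_0$ from the rest.

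For part (i) I would start from $x_0 = 1$, which is Definition \ref{def:SCalgebra}(i), and combine it with $1 = \sum_{i=0}^d e_i$, which holds by Definition \ref{def:ss} since $\{e_i\}_{i=0}^d$ are the primitive idempotents of $\mathcal C$. On the other hand the eigenmatrix gives $x_0 = \sum_{i=0}^d P_{i,0} e_i$. Since $\{e_i\}_{i=0}^d$ is a basis of $\mathcal C$, comparing coefficients forces $P_{i,0} = 1$ for $0 \le i \le d$.

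For part (ii) I would apply the algebra homomorphism $\vphi : {\mathcal C} \to \F$ from Definition \ref{def:SCalgebra}(iv) to the relation $x_j = \sum_{i=0}^d P_{i,j} e_i$. By Definition \ref{def:e0} we have $\vphi(e_0) = 1$, and by Lemma \ref{lem:Cei} every other primitive idempotent is sent to $0$ by $\vphi$, so the right-hand side collapses to $P_{0,j}$. The left-hand side is $\vphi(x_j) = k_j$ by Definition \ref{def:SCalgebra}(iv). Therefore $P_{0,j} = k_j$ for $0 \le j \le d$.

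There is essentially no obstacle here; the only point requiring a moment's care is to invoke the correct characterization of $e_0$ (the trivial primitive idempotent, with $\vphi(e_0)=1$ and $\vphi(e_i)=0$ for $i \neq 0$) from Lemma \ref{lem:Cei} and Definition \ref{def:e0}, rather than re-deriving it. It is also worth remarking that this lemma mirrors Lemma \ref{lem:P0i}(i),(ii) for the first eigenmatrix of a symmetric idempotent system, and the proof is the analogue of that one.
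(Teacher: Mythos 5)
Your proposal is correct and matches the paper's proof essentially verbatim: part (i) comes from comparing $x_0 = 1 = \sum_{i=0}^d e_i$ with the defining relation of $P$, and part (ii) from applying $\vphi$ to $x_j = \sum_{i=0}^d P_{i,j} e_i$ using $\vphi(x_j) = k_j$ and $\vphi(e_i) = \delta_{i,0}$. No differences worth noting.
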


\begin{proof}
(i)
Since  $x_0 = 1 = \sum_{i=0}^d e_i$.

(ii)
We have $x_j = \sum_{i=0}^d P_{i,j} e_i$.
In this equation, apply $\vphi$ to each side.
By Definition \ref{def:SCalgebra}(iv) we have $\vphi (x_j ) = k_j$,
and by Definition \ref{def:e0} we have $\vphi (e_i) = \delta_{i,0}$ for $0 \leq i \leq d$.
By these comments we get the result.
\end{proof}

\section{Some properties of the eigenmatrix of a character system}
\label{sec:eigen}

In this section we show that the eigenmatrix of a character system
is AO normalized solid invertible.
Throughout this section 
let $ ({\mathcal C}; \{x_i\}_{i=0}^d)$ denote a character algebra over $\F$.
The following definition is a variation on \cite[p.\ 145]{AB}.

\begin{defi}    \label{def:bilin}    \samepage
\ifDRAFT {\rm def:bilin}. \fi
Define a bilinear form $\b{ \; , \; } : {\mathcal C} \times {\mathcal C} \to \F$
such that
\begin{align}
 \b{ x_i, x_j } &= \delta_{i,j} k_i   &&  (0 \leq i,j \leq d).       \label{eq:bxixj}
\end{align}
\end{defi}

\begin{lemma}   \label{lem:nondeg}    \samepage
\ifDRAFT {\rm lem:nondeg}. \fi
The bilinear form $\b{ \; , \;}$ is symmetric and nondegenerate.
\end{lemma}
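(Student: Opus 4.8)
The plan is to verify the two properties directly from the definition of the form via its Gram matrix on the basis $\{x_i\}_{i=0}^d$.

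First I would treat symmetry. The form $\b{\;,\;}$ is defined by its values on the distinguished basis, namely $\b{x_i,x_j} = \delta_{i,j} k_i$. Since $\delta_{i,j}k_i = \delta_{j,i}k_j$ for all $0 \leq i,j \leq d$ (the Kronecker delta forces $i=j$ when it is nonzero), we have $\b{x_i,x_j} = \b{x_j,x_i}$ on all pairs of basis vectors. Bilinearity then propagates this to all of $\mathcal C$: writing $u = \sum_i \alpha_i x_i$ and $v = \sum_j \beta_j x_j$, both $\b{u,v}$ and $\b{v,u}$ expand to $\sum_{i,j} \alpha_i \beta_j \b{x_i,x_j} = \sum_{i} \alpha_i \beta_i k_i$. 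Hence the form is symmetric.

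Next I would treat nondegeneracy. The Gram matrix of $\b{\;,\;}$ with respect to $\{x_i\}_{i=0}^d$ is the diagonal matrix with $(i,i)$-entry $k_i$, and by Definition \ref{def:SCalgebra}(iii) each $k_i$ is nonzero, so this matrix is invertible. Concretely: if $u = \sum_i \alpha_i x_i$ satisfies $\b{u,v}=0$ for all $v \in \mathcal C$, then in particular $0 = \b{u, x_j} = \alpha_j k_j$ for each $j$, and since $k_j \neq 0$ we get $\alpha_j = 0$ for all $j$, so $u=0$. Thus there is no nonzero $u$ orthogonal to all of $\mathcal C$, which by the equivalence recalled in Section \ref{sec:pre} means the form is nondegenerate.

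There is no real obstacle here; the statement is immediate once one observes that the Gram matrix on the distinguished basis is diagonal with nonzero diagonal entries $\{k_i\}_{i=0}^d$. The only point requiring a (previously established) input is that the $k_i$ are nonzero, which is part of Definition \ref{def:SCalgebra}(iii). The remainder is a routine unwinding of bilinearity.
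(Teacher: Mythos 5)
Your proof is correct and matches the paper's approach exactly: the paper's proof is the one-line remark ``By \eqref{eq:bxixj} and since $k_i \neq 0$ for $0 \leq i \leq d$,'' and your argument simply spells out the details (symmetry of the Gram matrix $\delta_{i,j}k_i$ and nondegeneracy from the nonvanishing of the $k_i$).
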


\begin{proof}
By \eqref{eq:bxixj} and since $k_i \neq 0$ for $0 \leq i \leq d$.
\end{proof}

The following result is a variation on \cite[Proposition 2.5]{Blau}.

\begin{lemma}    \label{lem:buv}    \samepage
\ifDRAFT {\rm lem:buv}. \fi
For $u, v \in {\mathcal C}$,
$\b{ u, v } = \b{ u v, x_0}$.
\end{lemma}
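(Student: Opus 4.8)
The statement to prove is that for all $u,v\in\mathcal C$ we have $\b{u,v}=\b{uv,x_0}$. Since both sides are bilinear in $(u,v)$ and the $\{x_i\}_{i=0}^d$ form a basis of $\mathcal C$, the plan is to reduce to the case $u=x_i$, $v=x_j$ for $0\le i,j\le d$, and verify the identity there by a direct computation with the intersection numbers. So the first step is to invoke bilinearity of $\b{\;,\;}$ (Definition \ref{def:bilin}) together with bilinearity of the product in $\mathcal C$ to say it suffices to check $\b{x_i,x_j}=\b{x_i x_j,x_0}$.

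For the right-hand side, I would expand $x_i x_j=\sum_{h=0}^d p^h_{ij}x_h$ using \eqref{eq:xixj}, so that $\b{x_i x_j,x_0}=\sum_{h=0}^d p^h_{ij}\b{x_h,x_0}$. By Definition \ref{def:bilin} we have $\b{x_h,x_0}=\delta_{h,0}k_0$, and $k_0=1$ by Lemma \ref{lem:Calgp}(i), so the sum collapses to $p^0_{ij}$. Now apply Definition \ref{def:SCalgebra}(iii), namely $p^0_{ij}=\delta_{i,j}k_i$. Hence $\b{x_i x_j,x_0}=\delta_{i,j}k_i$, which is exactly $\b{x_i,x_j}$ by \eqref{eq:bxixj}. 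This establishes the identity on the basis elements, and then bilinearity gives it for all $u,v\in\mathcal C$.

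I do not expect a genuine obstacle here; the only mild care needed is the bilinearity reduction — one should note that $\b{uv,x_0}$ is bilinear in $(u,v)$ because multiplication in $\mathcal C$ is bilinear and $\b{\;,x_0}$ is linear, matching the bilinearity of $\b{u,v}$. Everything else is a two-line unwinding of the definitions and the facts $k_0=1$ and $p^0_{ij}=\delta_{i,j}k_i$. So the write-up will be short: state the reduction to basis vectors, expand the product, use $\b{x_h,x_0}=\delta_{h,0}$, identify the surviving term as $p^0_{ij}=\delta_{i,j}k_i$, and compare with \eqref{eq:bxixj}.
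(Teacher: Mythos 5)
Your proposal is correct and follows essentially the same route as the paper: reduce to basis elements by bilinearity, expand $x_i x_j$ via \eqref{eq:xixj}, collapse the sum using $\b{x_h,x_0}=\delta_{h,0}k_0$ with $k_0=1$, and identify $p^0_{ij}=\delta_{i,j}k_i=\b{x_i,x_j}$. No differences worth noting.
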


\begin{proof}
Without loss of generality, we assume that  $u = x_i$ and $v= x_j$ for some 
integers $i$, $j$ $(0 \leq i,j \leq d)$.
Using \eqref{eq:xixj}, \eqref{eq:bxixj}, \eqref{eq:p0ij}, \eqref{eq:bxixj} in order,
\[
 \b{ x_i x_j, x_0 } = \sum_{h=0}^d p^h_{i j} \b{x_h, x_0} = \sum_{h=0}^d p^h_{i j} \delta_{h,0} k_0
 = p^0_{i j} = \delta_{i,j} k_i = \b{x_i, x_j}.
\]
The result follows.
\end{proof}

For the rest of this section, assume that $({\mathcal C}; \{x_i\}_{i=0}^d)$ is semisimple,
and let $\Psi = ({\mathcal C}; \{x_i\}_{i=0}^d; \{e_i\}_{i=0}^d)$ denote an associated character system.
Recall the eigenmatrix  $P = P_\Psi$ from Definition \ref{def:P2}.

\begin{lemma}    \label{lem:beiej}    \samepage
\ifDRAFT {\rm lem:beiej}. \fi
The basis $\{e_i\}_{i=0}^d$ of  $\mathcal C$ is orthogonal with respect to $\b{ \; , \;}$.
\end{lemma}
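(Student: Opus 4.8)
The plan is to show that for $i\neq j$ the primitive idempotents $e_i$, $e_j$ satisfy $\b{e_i,e_j}=0$, using the multiplicative characterization of the form from Lemma \ref{lem:buv}. First I would recall that $e_i e_j=\delta_{i,j}e_i$ since the $\{e_i\}_{i=0}^d$ are the primitive idempotents of the semisimple character algebra $\mathcal C$ (Definition \ref{def:ss}). Then, for fixed $i\neq j$, I would apply Lemma \ref{lem:buv} with $u=e_i$ and $v=e_j$ to obtain
\[
  \b{e_i,e_j} = \b{e_i e_j, x_0} = \b{0,x_0} = 0.
\]
This already establishes that $\{e_i\}_{i=0}^d$ is orthogonal with respect to $\b{\;,\;}$ in the sense of the definition in Section \ref{sec:pre}, and since $\b{\;,\;}$ is nondegenerate by Lemma \ref{lem:nondeg}, it follows automatically that $\b{e_i,e_i}\neq 0$ for $0\leq i\leq d$, as noted there; so no further work is needed on the diagonal entries.

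I expect no serious obstacle here: the only subtlety is making sure Lemma \ref{lem:buv} applies to arbitrary elements $u,v\in\mathcal C$ and not just to the basis $\{x_i\}_{i=0}^d$. But the statement of Lemma \ref{lem:buv} is already phrased for general $u,v\in\mathcal C$ (its proof reduces to the basis case by bilinearity), so we may invoke it directly with $u=e_i$ and $v=e_j$. Hence the proof is essentially the one-line computation above, together with a citation of Lemma \ref{lem:buv} and the idempotent relations among the $e_i$.
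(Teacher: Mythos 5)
Your proof is correct and matches the paper's own argument exactly: both use $e_ie_j=0$ for $i\neq j$ together with Lemma \ref{lem:buv} to conclude $\b{e_i,e_j}=\b{e_ie_j,x_0}=0$. No issues.
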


\begin{proof}
For $0 \leq i,j \leq d$ with $i \neq j$ we have $e_i e_j = 0$.
By this and Lemma \ref{lem:buv} we have
$\b{e_i, e_j} = \b{e_i e_j, x_0} = 0$.
The result follows.
\end{proof}

\begin{defi}    \label{def:nu1}   \samepage
\ifDRAFT {\rm def:nu1}. \fi
For $0 \leq i \leq d$ define $m_i = \b{e_i, e_i}$ and
note that $m_i$ is nonzero.
Define $\nu = m_0^{-1}$.
We call $\nu$ the {\em size of $\Psi$}.
\end{defi}

\begin{defi}    \label{def:nu2}   \samepage
\ifDRAFT {\rm def:nu2}. \fi
For $0 \leq i \leq d$ define $k^*_i = \nu m_i$.
By construction $k^*_0 = 1$ and
\begin{align}
  \b{e_i, e_i} &= \nu^{-1} k^*_i   &&  (0 \leq i \leq d).    \label{eq:beiei}
\end{align}
\end{defi}

\begin{lemma}    \label{lem:beixj}     \samepage
\ifDRAFT {\rm lem:beixj}. \fi
For $0 \leq i,j \leq d$,
\begin{equation}
\b{ x_i, e_j } = \nu^{-1} P_{j,i} k^*_j = k_i (P^{-1})_{i,j}.          \label{eq:beixj}
\end{equation}
\end{lemma}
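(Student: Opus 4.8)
The plan is to evaluate $\b{x_i,e_j}$ in two different ways: once by expanding $x_i$ in the basis $\{e_h\}_{h=0}^d$, and once by expanding $e_j$ in the basis $\{x_h\}_{h=0}^d$. The change-of-basis data is exactly the eigenmatrix $P=P_\Psi$ from Definition \ref{def:P2}: since $P$ is the transition matrix from $\{e_i\}_{i=0}^d$ to $\{x_i\}_{i=0}^d$ we have $x_i=\sum_{h=0}^d P_{h,i}e_h$, and since $P^{-1}$ is the transition matrix from $\{x_i\}_{i=0}^d$ to $\{e_i\}_{i=0}^d$ we have $e_j=\sum_{h=0}^d (P^{-1})_{h,j}x_h$.

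For the first equality I would substitute $x_i=\sum_{h=0}^d P_{h,i}e_h$ into $\b{x_i,e_j}$ and use bilinearity of $\b{\;,\;}$ to get $\b{x_i,e_j}=\sum_{h=0}^d P_{h,i}\b{e_h,e_j}$. By Lemma \ref{lem:beiej} the basis $\{e_h\}_{h=0}^d$ is orthogonal with respect to $\b{\;,\;}$, so only the $h=j$ term survives, giving $\b{x_i,e_j}=P_{j,i}\b{e_j,e_j}$; then \eqref{eq:beiei} yields $\b{x_i,e_j}=\nu^{-1}P_{j,i}k^*_j$.

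For the second equality I would instead substitute $e_j=\sum_{h=0}^d (P^{-1})_{h,j}x_h$ into $\b{x_i,e_j}$ and use bilinearity to get $\b{x_i,e_j}=\sum_{h=0}^d (P^{-1})_{h,j}\b{x_i,x_h}$. By \eqref{eq:bxixj} we have $\b{x_i,x_h}=\delta_{i,h}k_i$, so only the $h=i$ term survives, giving $\b{x_i,e_j}=k_i(P^{-1})_{i,j}$.

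There is no real obstacle here: the argument is a direct application of the two orthogonality relations, one for the basis $\{e_h\}_{h=0}^d$ and one for the basis $\{x_h\}_{h=0}^d$. The only point requiring care is the bookkeeping of the transition-matrix convention (which index is summed over), so that $P$ rather than $P^{\sf t}$ appears in the answer. As a byproduct, equating the two expressions reproduces the identity $\nu^{-1}P_{j,i}k^*_j=k_i(P^{-1})_{i,j}$ with no further effort.
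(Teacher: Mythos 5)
Your proposal is correct and follows essentially the same route as the paper: the paper likewise computes $\b{x_i,e_j}$ once as $P_{j,i}\b{e_j,e_j}$ via Lemma \ref{lem:beiej} and once as $(P^{-1})_{i,j}\b{x_i,x_i}$ via \eqref{eq:bxixj}, then finishes with \eqref{eq:beiei}. Your write-up merely makes the bilinearity expansions and the transition-matrix conventions explicit, and both are handled correctly.
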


\begin{proof}
We have $\b{x_i ,e_j} = P_{j,i} \b{e_j, e_j}$ by Lemma \ref{lem:beiej} and 
since $P$ is the transition matrix from $\{e_\ell\}_{\ell=0}^d$ to $\{x_\ell\}_{\ell=0}^d$.
We have $\b{x_i, e_j} = (P^{-1})_{i,j} \b{x_i, x_i}$ by \eqref{eq:bxixj} and since
$P^{-1}$ is the transition matrix from $\{x_\ell\}_{\ell=0}^d$ to $\{e_\ell\}_{\ell=0}^d$.
By these comments and  \eqref{eq:bxixj}, \eqref{eq:beiei} we get the result.
\end{proof}

\begin{lemma}    \label{lem:bxe0}    \samepage
\ifDRAFT {\rm lem:bxe0}. \fi
For $x \in {\mathcal C}$ we have $\vphi(x) = \nu \b{x, e_0}$.
\end{lemma}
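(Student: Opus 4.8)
For $x \in {\mathcal C}$ we have $\vphi(x) = \nu \b{x, e_0}$.

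The plan is to reduce the asserted identity to the distinguished basis $\{x_i\}_{i=0}^d$ and then evaluate both sides using facts established just above. First I would observe that $x \mapsto \vphi(x)$ is $\F$-linear (it is an algebra homomorphism) and $x \mapsto \b{x,e_0}$ is $\F$-linear (by bilinearity of $\b{\;,\;}$ in the first argument), so it suffices to prove $\vphi(x_i) = \nu\b{x_i,e_0}$ for $0 \leq i \leq d$.

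For this reduced identity I would specialize Lemma \ref{lem:beixj}: setting $j=0$ in \eqref{eq:beixj} gives $\b{x_i,e_0} = \nu^{-1}P_{0,i}k^*_0$. Now $k^*_0 = 1$ by Definition \ref{def:nu2} and $P_{0,i} = k_i$ by Lemma \ref{lem:xie02}(ii), so this collapses to $\b{x_i,e_0} = \nu^{-1}k_i$. Multiplying by $\nu$ and using $\vphi(x_i) = k_i$ from Definition \ref{def:SCalgebra}(iv), we get $\nu\b{x_i,e_0} = k_i = \vphi(x_i)$. This verifies the identity on the basis $\{x_i\}_{i=0}^d$, and the lemma follows by linearity.

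I do not expect a real obstacle; the only delicate point is keeping straight the two normalizations being combined, namely $k^*_0 = 1$ (a constant attached to the character system $\Psi$) and $P_{0,i} = k_i$ (an entry of the eigenmatrix of $\Psi$), and applying Lemma \ref{lem:beixj} with the indices of $P$ in the order given by \eqref{eq:beixj}. If one preferred not to pass through the eigenmatrix, one could instead note that $\vphi(e_j) = \delta_{j,0}$ (Definition \ref{def:e0} together with Lemma \ref{lem:Cei}) forces $xe_0 = \vphi(x)e_0$, and then combine this with Lemma \ref{lem:buv} and the value $\b{x_0,e_0} = \nu^{-1}$; but the basis-reduction argument above is the most economical.
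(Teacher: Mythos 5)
Your argument is correct, but it takes a different route from the paper's. The paper reduces to the basis $\{e_i\}_{i=0}^d$ of primitive idempotents: writing $x = \sum_{i=0}^d \alpha_i e_i$, it reads off $\vphi(x) = \alpha_0$ from Definition \ref{def:e0} and computes $\b{x,e_0} = \alpha_0 \b{e_0,e_0} = \alpha_0 \nu^{-1}$ directly from the orthogonality in Lemma \ref{lem:beiej} and Definition \ref{def:nu1}; this needs nothing about the eigenmatrix. You instead reduce to the basis $\{x_i\}_{i=0}^d$ and evaluate both sides via Lemma \ref{lem:beixj} together with $P_{0,i} = k_i$ and $k^*_0 = 1$; the computation checks out (and is, incidentally, the same specialization $j=0$ of \eqref{eq:beixj} that the paper later uses to prove Lemma \ref{lem:PAOpre}(i), so there is no circularity). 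The paper's choice of basis makes the proof a two-line calculation from first principles, whereas yours leans on the already-established entry formulas for $P$ and $P^{-1}$ and so imports slightly more machinery; on the other hand it makes explicit how the identity is just the $j=0$ column of \eqref{eq:beixj} in disguise. The alternative you sketch at the end, via $xe_0 = \vphi(x)e_0$, is essentially the paper's argument in multiplicative form.
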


\begin{proof}
Write $x = \sum_{i=0}^d \alpha_i e_i$ with $\alpha_i \in \F$ for $0 \leq i \leq d$.
By Definition \ref{def:e0} we get $\vphi (x) = \alpha_0$.
Using Lemma \ref{lem:beiej} and Definition \ref{def:nu1}, 
$\b{x, e_0} = \alpha_0 \b{e_0, e_0} = \alpha_0 \nu^{-1}$.
By these comments we get the result.
\end{proof}

\begin{lemma}   \label{lem:sumxi}   \samepage
\ifDRAFT {\rm lem:sumxi}. \fi
We have $ \sum_{i=0}^d x_i = \nu e_0$.
\end{lemma}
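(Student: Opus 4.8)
The plan is to expand the trivial primitive idempotent $e_0$ in the basis $\{x_i\}_{i=0}^d$ of $\mathcal{C}$ and to show that every coefficient equals $\nu^{-1}$; the claim then follows upon multiplying by $\nu$. So write $e_0 = \sum_{i=0}^d \alpha_i x_i$ with $\alpha_i \in \F$, which is legitimate by Definition \ref{def:SCalgebra}(ii).

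The main step is to evaluate $\b{e_0, x_i}$ in two ways. On one hand, using bilinearity of $\b{\;,\;}$ together with the relation \eqref{eq:bxixj}, we get $\b{e_0, x_i} = \sum_{j=0}^d \alpha_j \b{x_j, x_i} = \alpha_i k_i$. On the other hand, the form $\b{\;,\;}$ is symmetric by Lemma \ref{lem:nondeg}, so $\b{e_0, x_i} = \b{x_i, e_0}$, and Lemma \ref{lem:bxe0} applied to $x = x_i$ gives $\b{x_i, e_0} = \nu^{-1}\vphi(x_i) = \nu^{-1} k_i$, the last equality by Definition \ref{def:SCalgebra}(iv). Comparing the two evaluations yields $\alpha_i k_i = \nu^{-1} k_i$ for $0 \leq i \leq d$, and since $k_i \neq 0$ we conclude $\alpha_i = \nu^{-1}$. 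Hence $e_0 = \nu^{-1} \sum_{i=0}^d x_i$, which is the assertion.

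I do not expect any genuine obstacle here: the argument is a short bilinear-form computation, and the only points requiring care are the appeal to the symmetry of $\b{\;,\;}$ (Lemma \ref{lem:nondeg}) and the cancellation of the nonzero scalar $k_i$. For an alternative phrasing one may note that the displayed identity $\b{x_i, e_0} = \nu^{-1} k_i$ is precisely the $j = 0$ case of \eqref{eq:beixj} combined with Lemma \ref{lem:bxe0}, which forces $(P^{-1})_{i,0} = \nu^{-1}$ for the eigenmatrix $P = P_\Psi$ of Definition \ref{def:P2}; reading off column $0$ of $P^{-1}$ then gives $e_0 = \sum_{i=0}^d (P^{-1})_{i,0} x_i = \nu^{-1}\sum_{i=0}^d x_i$.
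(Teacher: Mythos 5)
Your proposal is correct and follows essentially the same route as the paper: expand $e_0$ in the basis $\{x_i\}_{i=0}^d$, pair with $x_i$ using \eqref{eq:bxixj} to get $\alpha_i k_i$, and compare with $\nu^{-1}\vphi(x_i) = \nu^{-1}k_i$ via Lemma \ref{lem:bxe0} to conclude $\alpha_i = \nu^{-1}$. The only cosmetic difference is that you explicitly invoke the symmetry of the form to pass from $\b{e_0,x_i}$ to $\b{x_i,e_0}$, a step the paper leaves implicit.
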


\begin{proof}
Write $e_0 = \sum_{\ell=0}^d \alpha_\ell x_\ell$ with $\alpha_\ell \in \F$ for $0 \leq \ell \leq d$.
Pick an integer $i$ $(0 \leq i \leq d)$.
In the previous equation, take the inner product with $x_i$,
and simplify the result using \eqref{eq:bxixj} to get $\b{x_i, e_0} = \alpha_i k_i$.
By Definition \ref{def:SCalgebra}(iv) and Lemma \ref{lem:bxe0}, 
$k_i = \vphi (x_i) = \nu \b{ x_i, e_0}$.
By these comments, $\alpha_i = \nu^{-1}$.
The result follows.
\end{proof}

\begin{lemma}   \label{lem:PAOpre}   \samepage
\ifDRAFT {l\rm em:PAOpre}. \fi
For $0 \leq i,j \leq d$ the following hold:
\begin{itemize}
\item[\rm (i)]
$(P^{-1})_{i,0} = \nu^{-1}$;
\item[\rm (ii)]
$(P^{-1})_{0,j} = \nu^{-1} k^*_j$.
\end{itemize}
\end{lemma}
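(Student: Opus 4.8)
The plan is to extract both identities directly from Lemma~\ref{lem:beixj}, which gives the two-sided formula $\b{x_i,e_j} = \nu^{-1} P_{j,i} k^*_j = k_i (P^{-1})_{i,j}$, by specializing one index to $0$ and invoking the normalization data already recorded in Lemma~\ref{lem:xie02} together with $k_0 = k^*_0 = 1$.

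For part (i), I would set $j=0$ in Lemma~\ref{lem:beixj}, obtaining $\nu^{-1} P_{0,i} k^*_0 = k_i (P^{-1})_{i,0}$. Since $k^*_0 = 1$ and $P_{0,i} = k_i$ by Lemma~\ref{lem:xie02}(ii), the left side equals $\nu^{-1} k_i$. As $k_i \neq 0$ for $0 \leq i \leq d$, dividing by $k_i$ yields $(P^{-1})_{i,0} = \nu^{-1}$. For part (ii), I would instead set $i=0$ in Lemma~\ref{lem:beixj}, obtaining $\nu^{-1} P_{j,0} k^*_j = k_0 (P^{-1})_{0,j}$. Since $k_0 = 1$ and $P_{j,0} = 1$ by Lemma~\ref{lem:xie02}(i), this collapses to $(P^{-1})_{0,j} = \nu^{-1} k^*_j$, as desired.

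There is essentially no obstacle here; the content is bookkeeping once Lemma~\ref{lem:beixj} is in hand. As an alternative route for part (i) I would note that expanding $e_0 = \sum_{i=0}^d (P^{-1})_{i,0} x_i$ and comparing with Lemma~\ref{lem:sumxi}, which states $\sum_{i=0}^d x_i = \nu e_0$, immediately forces $(P^{-1})_{i,0} = \nu^{-1}$ by linear independence of $\{x_i\}_{i=0}^d$; but since both parts follow uniformly from Lemma~\ref{lem:beixj}, I would present that single argument.
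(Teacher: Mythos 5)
Your proof is correct and follows exactly the paper's own argument: the paper likewise proves (i) by setting $j=0$ in Lemma \ref{lem:beixj} and using $k^*_0=1$ with Lemma \ref{lem:xie02}(ii), and proves (ii) by setting $i=0$ and using $k_0=1$ with Lemma \ref{lem:xie02}(i). The index bookkeeping and the division by $k_i \neq 0$ are handled correctly, so nothing is missing.
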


\begin{proof}
(i)
Set $j=0$ in \eqref{eq:beixj}  and evaluate the result using $k^*_0=1$ along with
Lemma \ref{lem:xie02}(ii).

(ii)
Set $i=0$ in \eqref{eq:beixj}  and evaluate the result using
$k_0=1$ along with Lemma \ref{lem:xie02}(i).
\end{proof}

The following two propositions are variations on \cite[Theorem II.5.5]{BI}.

\begin{prop}    \label{prop:Psolid}    \samepage
\ifDRAFT {\rm prop:Psolid}. \fi
The eigenmatrix $P$ of $\Psi$ is normalized solid invertible.
\end{prop}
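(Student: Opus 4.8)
The plan is to verify the two conditions in Definition~\ref{def:normalized} together with solidity, drawing on the results already established in this section. First I would record that $P$ is invertible: since $P$ is the transition matrix between two bases of $\mathcal C$ (the primitive idempotents $\{e_i\}_{i=0}^d$ and the distinguished basis $\{x_i\}_{i=0}^d$), it is automatically invertible, and $P^{-1}$ is the transition matrix in the other direction. This is needed before the word ``solid'' even makes sense.

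Next I would check the four nonvanishing conditions that make $P$ solid, namely that column $0$ and row $0$ of $P$ have all entries nonzero, and likewise for $P^{-1}$. For column $0$ of $P$ this is immediate from Lemma~\ref{lem:xie02}(i), which gives $P_{i,0}=1$. For row $0$ of $P$ we use Lemma~\ref{lem:xie02}(ii), giving $P_{0,j}=k_j$, together with the fact that each $k_j$ is nonzero (Definition~\ref{def:SCalgebra}(iii)). For column $0$ of $P^{-1}$ we use Lemma~\ref{lem:PAOpre}(i), which gives $(P^{-1})_{i,0}=\nu^{-1}$; since $\nu=m_0^{-1}$ and $m_0\neq 0$ (Definition~\ref{def:nu1}), this is nonzero. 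For row $0$ of $P^{-1}$ we use Lemma~\ref{lem:PAOpre}(ii), giving $(P^{-1})_{0,j}=\nu^{-1}k^*_j$, which is nonzero because $\nu\neq 0$ and each $k^*_j=\nu m_j\neq 0$. Hence $P$ is solid invertible.

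Finally I would verify the normalization conditions in Definition~\ref{def:normalized}. Condition~(i) asks that all entries in column $0$ of $P$ equal $1$: this is exactly Lemma~\ref{lem:xie02}(i). Condition~(ii) asks that all entries in column $0$ of $P^{-1}$ be the same: this is exactly Lemma~\ref{lem:PAOpre}(i), which says they all equal $\nu^{-1}$. So $P$ is normalized, and the proof is complete.

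I do not anticipate a genuine obstacle here; the real content was front-loaded into Lemmas~\ref{lem:xie02} and~\ref{lem:PAOpre}, which in turn rest on the bilinear-form machinery (orthogonality of $\{e_i\}$ with respect to $\b{\;,\;}$ and the identity $\b{u,v}=\b{uv,x_0}$). The only thing one must be slightly careful about is not conflating this matrix $P$, the eigenmatrix of a character system, with the first eigenmatrix $P_\Phi$ of an idempotent system from Definition~\ref{def:P}; here everything is stated intrinsically in terms of $\Psi$, so there is no circularity. The companion statement that $P$ is also AO will presumably be the next proposition and will use Lemma~\ref{lem:beixj} to relate $P^{\sf t}$ and $P^{-1}$ up to diagonal factors.
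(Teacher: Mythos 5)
Your proposal is correct and follows exactly the paper's route: the paper's proof is simply ``By Lemmas \ref{lem:xie02} and \ref{lem:PAOpre},'' and your argument is a careful unpacking of those two lemmas (together with the nonvanishing of the $k_j$, $\nu$, and $k^*_j$) to verify solidity and the two normalization conditions. Nothing is missing.
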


\begin{proof}
By Lemmas \ref{lem:xie02} and \ref{lem:PAOpre}.
\end{proof}

\begin{prop}   \label{prop:PAO}    \samepage
\ifDRAFT {\rm prop:PAO}. \fi
The eigenmatrix $P$ of $\Psi$ is AO.
\end{prop}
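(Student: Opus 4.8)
Show that the eigenmatrix $P = P_\Psi$ of a character system is almost orthogonal (AO), i.e.\ $P^{\sf t}$ is diagonally equivalent to $P^{-1}$.

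The plan is to exploit the nondegenerate symmetric bilinear form $\b{\;,\;}$ from Definition \ref{def:bilin}, whose matrix entries with respect to the two bases $\{x_i\}$ and $\{e_i\}$ encode both $P$ and $P^{-1}$. First I would record the key identity from Lemma \ref{lem:beixj}: for $0 \leq i,j \leq d$,
\[
  \nu^{-1} P_{j,i} k^*_j = k_i (P^{-1})_{i,j}.
\]
Reading this as an equality of matrices, the left side is the $(j,i)$-entry of $K^* P$ scaled by $\nu^{-1}$, and the right side is the $(i,j)$-entry of $K P^{-1}$, where $K$, $K^*$ are the diagonal matrices with entries $k_i$, $k^*_i$ (as in Definition \ref{def:KKs} in the idempotent-system setting; here I would introduce them freshly for the character algebra, or simply write $\mathrm{diag}(k_i)$ and $\mathrm{diag}(k^*_i)$ to avoid relying on an undefined macro). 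Transposing the left side, the identity becomes
\[
  \nu^{-1} (P^{\sf t} K^*)_{i,j} = (K P^{-1})_{i,j} \qquad (0 \leq i,j \leq d),
\]
that is, $\nu^{-1} P^{\sf t} K^* = K P^{-1}$, equivalently $P^{\sf t} = \nu K P^{-1} (K^*)^{-1}$.

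Since $\nu K$ and $(K^*)^{-1}$ are invertible diagonal matrices (their diagonal entries $\nu k_i$ and $1/k^*_i$ are nonzero by Definition \ref{def:SCalgebra}(iii) and the definition of $k^*_i$ together with Definition \ref{def:nu2}), this displays $P^{\sf t}$ as $H P^{-1} K'$ with $H = \nu K$ and $K' = (K^*)^{-1}$ both invertible diagonal. Hence $P^{\sf t}$ is diagonally equivalent to $P^{-1}$ in the sense of Definition \ref{def:diagequiv}. Combined with Proposition \ref{prop:Psolid}, which already gives that $P$ is normalized solid invertible, this shows $P$ is AO, completing the proof.

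**Main obstacle.** There is no real obstacle: the entire content is the double evaluation of $\b{x_i, e_j}$ in Lemma \ref{lem:beixj}, which has already been done. The only care needed is bookkeeping — correctly matching indices when transposing, and being sure every scalar appearing as a diagonal entry ($\nu$, $k_i$, $k^*_i$) is nonzero so that the diagonal matrices involved are genuinely invertible. I would therefore keep the proof to a few lines: quote \eqref{eq:beixj}, rearrange into the matrix identity $P^{\sf t} = \nu K P^{-1} (K^*)^{-1}$, and invoke Definition \ref{def:feasible}.

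\medskip\noindent{\bf Proof.\ }
For $0 \leq i \leq d$ let $K$ (resp.\ $K^*$) denote the diagonal matrix in $\Matd$ with $(i,i)$-entry $k_i$ (resp.\ $k^*_i$); these are invertible since $k_i \neq 0$ and $k^*_i \neq 0$ for $0 \leq i \leq d$. By Lemma \ref{lem:beixj}, for $0 \leq i,j \leq d$ we have $\nu^{-1} P_{j,i} k^*_j = k_i (P^{-1})_{i,j}$. The left-hand side equals $\nu^{-1} (P^{\sf t} K^*)_{i,j}$ and the right-hand side equals $(K P^{-1})_{i,j}$, so
\[
  \nu^{-1} P^{\sf t} K^* = K P^{-1}.
\]
Therefore $P^{\sf t} = (\nu K) P^{-1} (K^*)^{-1}$, where $\nu K$ and $(K^*)^{-1}$ are invertible diagonal matrices in $\Matd$. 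Hence $P^{\sf t}$ is diagonally equivalent to $P^{-1}$, so $P$ is AO by Definition \ref{def:feasible}.
\qed\medskip
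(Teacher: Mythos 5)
Your proof is correct and follows essentially the same route as the paper: both extract the matrix identity $\nu^{-1}P^{\sf t}K^* = KP^{-1}$ from Lemma \ref{lem:beixj} and rearrange it to exhibit the diagonal equivalence of $P^{\sf t}$ and $P^{-1}$. The only cosmetic difference is that the paper solves for $P^{-1}$ rather than $P^{\sf t}$, which is immaterial since diagonal equivalence is symmetric.
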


\begin{proof}
By Definition \ref{def:feasible} it suffices to show that
$P^{-1}$ and $P^{\sf t}$ are diagonally equivalent.
Let $K$ (resp.\ $K^*$) denote the diagonal matrix in $\Matd$ that has
$(i,i)$-entry $k_i$ (resp.\ $k^*_i$) for $0 \leq i \leq d$.
Note that $K$ and $K^*$ are invertible.
By \eqref{eq:beixj} we have $\nu^{-1} P^{\sf t} K^* = K P^{-1}$,
and so  $P^{-1} = \nu^{-1} K^{-1} P^{\sf t} K^*$.
Thus $P^{-1}$ and $P^{\sf t}$ are diagonally equivalent.
The result follows.
\end{proof}

By Propositions \ref{prop:CalgP}, \ref{prop:Psolid}, \ref{prop:PAO}
there exists a map
$\text{\rm CS}_d (\F) \to \text{\rm AON}_d (\F)$
that sends $[\Psi] \mapsto P_\Psi$.
In the next section we will show that this map is a bijection.

\section{Symmetric idempotent systems, character systems, and
AO normalized solid invertible matrices}
\label{sec:SISCS}

Recall the sets $\text{\rm AON}_d (\F)$, $\text{\rm SIS}_d (\F)$
from Definition \ref{def:AONSIPS}
and the set $\text{\rm CS}_d (\F)$ from Definition \ref{def:CS}.
As we mentioned in Section \ref{sec:intro},
our goal is to show that these three sets are mutually in bijection,
and to describe the bijections involved.
In Sections \ref{sec:normalized} and \ref{sec:known},
we obtained a bijection $\text{\rm AON}_d (\F) \to \text{\rm SIS}_d (\F)$,
and described its inverse.
At the end of Section \ref{sec:eigen} we obtained a map
$\text{\rm CS}_d (\F) \to \text{\rm AON}_d (\F)$.
In the present section we show that this map is a bijection,
and describe its inverse.
We also describe the bijective correspondence between 
$\text{\rm SIS}_d (\F)$ and $\text{\rm CS}_d (\F)$.

\begin{prop}     \label{prop:PsiPhi}    \samepage
\ifDRAFT {\rm prop:PsiPhi}. \fi
Let $\Phi = (\{E_i\}_{i=0}^d; \{E^*_i\}_{i=0}^d)$ denote a symmetric idempotent system over $\F$.
Define a sequence
\[
  \Psi_\Phi =  ({\mathcal M}; \{A_i\}_{i=0}^d; \{E_i\}_{i=0}^d),
\]
where the algebra $\mathcal M$ is from Definition \ref{def:M}
and the elements  $\{A_i\}_{i=0}^d$ are from Lemma \ref{lem:Ai}.
Then $\Psi_\Phi$ is a character system over $\F$ 
whose eigenmatrix is the first eigenmatrix of $\Phi$.
\end{prop}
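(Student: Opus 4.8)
The plan is to verify, one by one, the four defining conditions of a character algebra from Definition \ref{def:SCalgebra} for the pair $(\mathcal{M}; \{A_i\}_{i=0}^d)$, then check semisimplicity via the idempotents $\{E_i\}_{i=0}^d$, identify $E_0$ as the trivial primitive idempotent, and finally compare transition matrices to identify the eigenmatrix of $\Psi_\Phi$ with the first eigenmatrix $P_\Phi$ of $\Phi$. Most of the ingredients are already assembled in Section \ref{sec:known}: $\mathcal{M}$ is commutative (Definition \ref{def:M}), $\{A_i\}_{i=0}^d$ is a basis of $\mathcal{M}$ (Lemma \ref{lem:Aibasis}), $A_0 = I$ (Lemma \ref{lem:A0}), the structure constants $p^h_{ij}$ of $\{A_i\}$ satisfy $p^0_{ij} = \delta_{i,j} k_i$ with $k_i \neq 0$ (Lemmas \ref{lem:ISp0ij}, \ref{lem:ISki}), and $\{E_i\}_{i=0}^d$ is a system of orthogonal idempotents summing to $I$ (Lemma \ref{lem:Ei}).

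First I would dispatch conditions (i)--(iii) of Definition \ref{def:SCalgebra}: (i) is $A_0 = I$ by Lemma \ref{lem:A0}; (ii) is Lemma \ref{lem:Aibasis}; and (iii), with scalars $p^h_{ij}$ from \eqref{eq:ISphij}, holds because $p^0_{ij} = \delta_{i,j} k_i$ by Lemma \ref{lem:ISp0ij} and $k_i \neq 0$ by Lemma \ref{lem:ISki}(i). For condition (iv) I need an algebra homomorphism $\varphi : \mathcal{M} \to \F$ with $\varphi(A_i) = k_i$. The natural candidate is the $\F$-linear map sending $A_i \mapsto k_i$; to see it is an algebra homomorphism, apply it to \eqref{eq:ISphij} and compare with Lemma \ref{lem:ISkikj}, which says exactly $k_i k_j = \sum_h p^h_{ij} k_h$, i.e. $\varphi(A_i)\varphi(A_j) = \varphi(A_i A_j)$; that it sends $I = A_0$ to $k_0 = 1$ is Lemma \ref{lem:ISki}(iii). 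This establishes that $(\mathcal{M}; \{A_i\}_{i=0}^d)$ is a character algebra over $\F$.

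Next I would check semisimplicity: by Definition \ref{def:ss} it suffices to exhibit a basis of $\mathcal{M}$ consisting of orthogonal idempotents summing to $1$, and $\{E_i\}_{i=0}^d$ is such a basis by Lemma \ref{lem:Ei}. So $\{E_i\}_{i=0}^d$ are the primitive idempotents of $\mathcal{M}$. To form the character system I must identify the trivial one. By Definition \ref{def:e0} the trivial primitive idempotent is the unique one not killed by $\varphi$; since $\varphi(E_i)$ is determined by writing $E_i$ in the basis $\{A_j\}$, it is cleanest to compute $\varphi(E_j) = \delta_{j,0}$ using that $\{A_i\}$ and $\{E_i\}$ are related by the first eigenmatrix $P = P_\Phi$ together with Lemma \ref{lem:P0i}: one has $A_i = \sum_j P_{j,i} E_j$, hence $x_j := A_j$ expands over $\{e_i := E_i\}$ via $P$, and applying $\varphi$ with $\varphi(A_i) = k_i$ and comparing with $P_{0,j} = k_j$ (Lemma \ref{lem:P0i}(ii)) forces $\varphi(E_i) = \delta_{i,0}$. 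Thus $E_0$ is trivial, so $\Psi_\Phi = (\mathcal{M}; \{A_i\}_{i=0}^d; \{E_i\}_{i=0}^d)$ is a character system. Finally, by Definition \ref{def:P2} the eigenmatrix of $\Psi_\Phi$ is the transition matrix from $\{E_i\}_{i=0}^d$ to $\{A_i\}_{i=0}^d$, which is by Definition \ref{def:P} precisely $P_\Phi$.

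The main obstacle, such as it is, will be the bookkeeping around condition (iv) and the identification of the trivial idempotent --- establishing that the linear map $A_i \mapsto k_i$ is multiplicative requires invoking Lemma \ref{lem:ISkikj} in the right direction and knowing $\varphi(I) = 1$, and pinning down $\varphi(E_i) = \delta_{i,0}$ requires care with which index of $P$ is being summed. Everything else is a direct citation of the lemmas in Section \ref{sec:known}. I would also remark that, since $k_i = \nu m^*_i$ (Definition \ref{def:ISki}) here plays the role of the valencies $k_i$ in Definition \ref{def:SCalgebra}, there is nothing to reconcile: the notation was chosen in Section \ref{sec:known} precisely so that this identification is immediate.
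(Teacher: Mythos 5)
Your proposal is correct, and its overall architecture---verify Definition \ref{def:SCalgebra}(i)--(iv) via Lemmas \ref{lem:A0}, \ref{lem:Aibasis}, \ref{lem:ISki}(i), \ref{lem:ISp0ij}, read off semisimplicity from the basis $\{E_i\}_{i=0}^d$ of orthogonal idempotents, identify $E_0$ as the trivial primitive idempotent, and match the eigenmatrix of $\Psi_\Phi$ with $P_\Phi$ by comparing Definitions \ref{def:P} and \ref{def:P2}---is exactly the paper's. The one place you diverge is the construction of the homomorphism $\vphi$ for condition (iv). The paper defines $\vphi$ on the idempotent basis by $\vphi(E_i)=\delta_{i,0}$, so that multiplicativity is immediate from $E_iE_j=\delta_{i,j}E_i$, and then computes $\vphi(A_j)=P_{0,j}=k_j$ using Lemma \ref{lem:P0i}(ii). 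You instead define $\vphi$ on the basis $\{A_i\}_{i=0}^d$ by $\vphi(A_i)=k_i$; this makes the required values in condition (iv) immediate, but it forces you to invoke Lemma \ref{lem:ISkikj} for multiplicativity and Lemma \ref{lem:ISki}(iii) for $\vphi(I)=1$, and then to recover $\vphi(E_i)=\delta_{i,0}$ (needed to see that $E_0$ is trivial) from the invertibility of $P$ together with Lemma \ref{lem:P0i}(ii). Both routes are sound and both ultimately rest on Lemma \ref{lem:P0i}(ii); the paper's choice of basis just shifts the nontrivial work from verifying multiplicativity to evaluating $\vphi$ on the $A_j$, which is slightly more economical since it avoids Lemma \ref{lem:ISkikj} altogether.
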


\begin{proof}
We first show that $({\mathcal M}; \{A_i\}_{i=0}^d)$ is a character algebra over $\F$.
We verify the conditions (i)--(iv) in Definition \ref{def:SCalgebra}.
Condition (i) holds by Lemma \ref{lem:A0}, 
and condition (ii) holds by Lemma \ref{lem:Aibasis}.
Recall the scalars $\{k_i\}_{i=0}^d$ from Definition \ref{def:ISki}
and the  scalars $p^h_{i j}$ from  \eqref{eq:ISphij}.
Condition (iii) holds by Lemmas \ref{lem:ISki}(i) and \ref{lem:ISp0ij}.
The elements $\{E_i\}_{i=0}^d$ form a basis of $\mathcal M$, so there exists an
$\F$-linear map $\vphi : {\mathcal M} \to \F$ such that
$\vphi (E_i) = \delta_{i,0}$ for $0 \leq i \leq d$.
One routinely checks that $\vphi$ is an algebra homomorphism.
Recall the first eigenmatrix $P = P_\Phi$ from Definition \ref{def:P}.
By Definition \ref{def:P},
$A_j = \sum_{i=0}^d P_{i,j} E_i$ for $0 \leq j \leq d$.
In this equation,  apply $\vphi$ to each side to find that
$\vphi (A_j ) = P_{0,j}$ for $0 \leq j \leq d$.
By this and Lemma \ref{lem:P0i}(ii),  $\vphi (A_j) = k_j$ for $0 \leq j \leq d$.
Thus condition (iv) holds.
We have shown that $({\mathcal M}; \{A_i\}_{i=0}^d)$ is a character algebra over $\F$.
By construction, $\{E_i\}_{i=0}^d$ are the primitive idempotents of $\mathcal M$.
So $({\mathcal M}; \{A_i\}_{i=0}^d)$ is semisimple.
By construction $\vphi (E_i) = \delta_{i,0}$ for $0 \leq i \leq d$, so 
the primitive idempotent $E_0$ is trivial.
Now $\Psi_\Phi$ is a character system over $\F$, in view of Definition \ref{def:SCsystem}.
By Definitions \ref{def:P}, \ref{def:P2}, $P$ is the eigenmatrix of $\Psi_\Phi$.
The result follows.
\end{proof}

By Proposition \ref{prop:PsiPhi} together with
Corollary \ref{cor:uniquePhi} and Proposition \ref{prop:CalgP}, 
we have a map
$\text{\rm SIS}_d (\F) \to \text{\rm CS}_d (\F)$,
$[\Phi] \mapsto [\Psi_\Phi]$.

\begin{prop}    \label{prop:main4}    \samepage
\ifDRAFT {\rm prop:main4}. \fi
The following {\rm (i)--(iii)} hold.
\begin{itemize}
\item[\rm (i)]
The map $\text{\rm SIS}_d (\F) \to \text{\rm AON}_d (\F)$,
$[\Phi] \mapsto P_\Phi$
is equal to the composition
\[
\xymatrix@C=60pt{
   \text{\rm SIS}_d (\F)
  \ar[r]_{ [\Phi] \; \mapsto \; [\Psi_\Phi] }
&
   \text{\rm CS}_d (\F)
  \ar[r]_{ [\Psi] \; \mapsto \; P_\Psi }
&
  \text{\rm AON}_d (\F).
}
\]
\item[\rm (ii)]
The map $\text{\rm CS}_d (\F) \to \text{\rm AON}_d (\F)$,
$[\Psi] \mapsto P_\Psi$
is a bijection.
\item[\rm (iii)]
The map $\text{\rm SIS}_d (\F) \to \text{\rm CS}_d (\F)$,
$[\Phi] \mapsto [\Psi_\Phi]$
is a bijection.
\end{itemize}
\end{prop}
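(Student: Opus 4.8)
The plan is to deduce all three parts from results already established, chaining together the bijection $\mathrm{AON}_d(\F)\to\mathrm{SIS}_d(\F)$ of Corollary \ref{cor:main2}, its inverse $[\Phi]\mapsto P_\Phi$ from Proposition \ref{prop:inverse}, and the map $\mathrm{CS}_d(\F)\to\mathrm{AON}_d(\F)$, $[\Psi]\mapsto P_\Psi$, constructed at the end of Section \ref{sec:eigen}. The central observation is Proposition \ref{prop:PsiPhi}: the eigenmatrix of the character system $\Psi_\Phi$ equals the first eigenmatrix $P_\Phi$ of $\Phi$. Part (i) is then essentially immediate: the composite $\mathrm{SIS}_d(\F)\to\mathrm{CS}_d(\F)\to\mathrm{AON}_d(\F)$ sends $[\Phi]\mapsto[\Psi_\Phi]\mapsto P_{\Psi_\Phi}$, and by Proposition \ref{prop:PsiPhi} this last matrix is $P_\Phi$, which is exactly the value of the direct map $[\Phi]\mapsto P_\Phi$. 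I would just note that both maps are well-defined (using Corollary \ref{cor:uniquePhi} and Proposition \ref{prop:CalgP} to see that $[\Phi]\mapsto[\Psi_\Phi]$ does not depend on the representative) and that they agree on every $[\Phi]$.

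For part (ii), I would argue that the map $[\Psi]\mapsto P_\Psi$ from $\mathrm{CS}_d(\F)$ to $\mathrm{AON}_d(\F)$ is both injective and surjective. Injectivity is Proposition \ref{prop:CalgP}: two character systems with the same eigenmatrix are isomorphic. For surjectivity, given $P\in\mathrm{AON}_d(\F)$, I would invoke the bijection of Corollary \ref{cor:main2} to produce a symmetric idempotent system $\Phi$ with $[\Phi_P]=[\Phi]$ and, by Proposition \ref{prop:inverse}, with $P_\Phi=P$; then Proposition \ref{prop:PsiPhi} gives a character system $\Psi_\Phi$ whose eigenmatrix is $P_\Phi=P$, so $P$ is in the image. (Alternatively one could cite the explicit construction $\Psi_P$ advertised in the introduction, but using $\Phi$ keeps the argument internal to what has been proved.) Hence the map is a bijection.

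Part (iii) follows formally from (i) and (ii) together with the fact, already in hand, that the map $\mathrm{SIS}_d(\F)\to\mathrm{AON}_d(\F)$, $[\Phi]\mapsto P_\Phi$, is a bijection — this is the inverse of the Corollary \ref{cor:main2} bijection, by Proposition \ref{prop:inverse}. Indeed, part (i) writes this bijection as the composite of $g:[\Phi]\mapsto[\Psi_\Phi]$ followed by the bijection $h:[\Psi]\mapsto P_\Psi$ of part (ii); since $h\circ g$ is a bijection and $h$ is a bijection, $g=h^{-1}\circ(h\circ g)$ is a bijection.

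I do not anticipate a genuine obstacle here: every ingredient — Proposition \ref{prop:PsiPhi}, Corollary \ref{cor:main2}, Proposition \ref{prop:inverse}, Proposition \ref{prop:CalgP}, Corollary \ref{cor:uniquePhi} — is already available, and the proof is a matter of bookkeeping with well-definedness of maps on isomorphism classes and the elementary cancellation lemma for compositions. The only point requiring a modicum of care is confirming that $[\Phi]\mapsto[\Psi_\Phi]$ is well-defined, i.e. respects the isomorphism relation; I would handle this by remarking that an isomorphism of idempotent systems $\Phi\to\Phi'$ restricts to an algebra isomorphism $\mathcal M\to\mathcal M'$ carrying $\{E_i\}$ to $\{E'_i\}$ and $\{A_i\}$ to $\{A'_i\}$ (the $A_i$ being characterized by the relation $A_iE^*_0E_0=E^*_iE_0$, which is preserved), hence an isomorphism of character systems $\Psi_\Phi\to\Psi_{\Phi'}$.
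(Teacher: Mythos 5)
Your proposal is correct and follows essentially the same route as the paper: part (i) from the last assertion of Proposition \ref{prop:PsiPhi}, part (ii) by combining injectivity from Proposition \ref{prop:CalgP} with surjectivity obtained from (i) and the bijectivity of $[\Phi]\mapsto P_\Phi$ (Proposition \ref{prop:inverse}), and part (iii) by formal cancellation. Your extra remarks on well-definedness of $[\Phi]\mapsto[\Psi_\Phi]$ match what the paper records just before the proposition statement.
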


\begin{proof}
(i) 
By the last assertion of Proposition \ref{prop:PsiPhi}.

(ii)
By Proposition \ref{prop:inverse}, the map
$\text{\rm SIS}_d (\F) \to \text{\rm AON}_d (\F)$,
$[\Phi] \mapsto P_\Phi$
is bijective.
By this and (i) above,
the map 
$\text{\rm CS}_d (\F) \to \text{\rm AON}_d (\F)$,
$[\Psi] \mapsto P_\Psi$
is surjective.
By Proposition \ref{prop:CalgP}
the map $\text{\rm CS}_d (\F) \to \text{\rm AON}_d (\F)$,
$[\Psi] \mapsto P_\Psi$  is injective.
The result follows.

(iii)
By (i), (ii) above and since the map
$\text{\rm SIS}_d (\F) \to \text{\rm AON}_d (\F)$,
$[\Phi] \mapsto P_\Phi$ is a bijection.
\end{proof}

Our next goal is to describe the inverse of the bijection in Proposition \ref{prop:main4}(ii).

\begin{lemma}    \label{lem:PsiP}   \samepage
\ifDRAFT {\rm lem:PsiP}. \fi
Let $P$ denote an AO normalized solid invertible matrix in $\Matd$.
Let $\{e_i\}_{i=0}^d$ denote indeterminates,
and let $\mathcal C$ denote the vector space over $\F$ with basis $\{e_i\}_{i=0}^d$.
Turn $\mathcal C$ into an algebra such that
$e_i e_j = \delta_{i,j} e_i$ for $0 \leq i,j \leq d$.
View $P$ as the transition matrix from the basis $\{e_i\}_{i=0}^d$ of $\mathcal C$
to a basis $\{x_i\}_{i=0}^d$ of $\mathcal C$.
Then
\[
   \Psi_P  = ({\mathcal C} ; \{x_i\}_{i=0}^d; \{e_i\}_{i=0}^d)
\]
is a character system over $\F$ that has eigenmatrix $P$.
\end{lemma}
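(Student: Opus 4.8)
The plan is to verify directly that the sequence $\Psi_P = ({\mathcal C}; \{x_i\}_{i=0}^d; \{e_i\}_{i=0}^d)$ satisfies Definition \ref{def:SCsystem}; that is, that $({\mathcal C}; \{x_i\}_{i=0}^d)$ is a semisimple character algebra, that $\{e_i\}_{i=0}^d$ are its primitive idempotents, and that $e_0$ is trivial. The semisimplicity and the identification of $\{e_i\}_{i=0}^d$ as the primitive idempotents are immediate by construction, since $e_i e_j = \delta_{i,j} e_i$ and $1 = \sum_{i=0}^d e_i$ will follow from the fact that $x_0 = \sum_{i=0}^d e_i$ (this uses $P_{i,0} = 1$ for all $i$, which holds because $P$ is normalized; see Definition \ref{def:normalized}(i)). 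So the real work is checking conditions (i)--(iv) of Definition \ref{def:SCalgebra} for $({\mathcal C}; \{x_i\}_{i=0}^d)$.

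First I would establish (i): $x_0 = 1$. Since $P_{i,0} = 1$ for $0 \leq i \leq d$ (normalization), we have $x_0 = \sum_{i=0}^d e_i$, which is the multiplicative identity of $\mathcal C$ by the definition of multiplication on $\mathcal C$. Condition (ii), that $\{x_i\}_{i=0}^d$ is a basis, is immediate since $P$ is invertible and is the transition matrix from one basis to $\{x_i\}_{i=0}^d$. For (iv), define $\vphi : {\mathcal C} \to \F$ to be the $\F$-linear map sending $e_i \mapsto \delta_{i,0}$; this is the projection onto the $e_0$-component, hence an algebra homomorphism because the $e_i$ are orthogonal idempotents. Setting $k_j := P_{0,j}$ for $0 \leq j \leq d$, applying $\vphi$ to $x_j = \sum_{i=0}^d P_{i,j} e_i$ gives $\vphi(x_j) = P_{0,j} = k_j$, so (iv) holds provided the $k_j$ turn out to be nonzero. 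The nonvanishing of $k_j = P_{0,j}$ holds because $P$ is solid (Definition \ref{def:solid}(i): row $0$ of $P$ has all nonzero entries).

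The main obstacle is condition (iii): one must show that the structure constants $p^h_{ij}$ defined by $x_i x_j = \sum_h p^h_{ij} x_h$ satisfy $p^0_{ij} = \delta_{i,j} k_i$ with $k_i = P_{0,i} \neq 0$. Here I would compute the $p^h_{ij}$ explicitly from $P$ and $P^{-1}$. Since $x_i = \sum_\ell P_{\ell,i} e_\ell$ and $e_\ell e_m = \delta_{\ell,m} e_\ell$, we get $x_i x_j = \sum_\ell P_{\ell,i} P_{\ell,j} e_\ell$, and re-expanding $e_\ell = \sum_h (P^{-1})_{h,\ell} x_h$ yields $p^h_{ij} = \sum_\ell P_{\ell,i} P_{\ell,j} (P^{-1})_{h,\ell}$. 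Taking $h=0$ and using Corollary \ref{cor:main1}/Proposition \ref{prop:normalized}-type normalization, namely $(P^{-1})_{0,\ell} = \nu^{-1} k^*_\ell$ for the appropriate scalars $k^*_\ell$ (all nonzero since $P^{-1}$ is solid) and that column $0$ of $P^{-1}$ is constant equal to $\nu^{-1}$, I must show $\sum_\ell P_{\ell,i} P_{\ell,j} (P^{-1})_{0,\ell} = \delta_{i,j} P_{0,i}$. This is exactly where the AO hypothesis enters: $P^{\sf t}$ diagonally equivalent to $P^{-1}$ means there are invertible diagonal $H, K$ with $P^{\sf t} = H P^{-1} K$, equivalently (reading off entries as in \eqref{eq:HRK} and \eqref{eq:beixj}) a relation of the shape $\nu^{-1} P^{\sf t} K^* = K P^{-1}$ where $K, K^*$ are the diagonal matrices of the $k_i$ and $k^*_i$. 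Substituting $(P^{-1})_{0,\ell}$ in terms of $P^{\sf t}$ via this relation turns the sum $\sum_\ell P_{\ell,i} P_{\ell,j} (P^{-1})_{0,\ell}$ into a rescaled row of $P^{-1} P = I$, which collapses to $\delta_{i,j}$ times the correct scalar.

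Once (i)--(iv) are verified, $({\mathcal C}; \{x_i\}_{i=0}^d)$ is a character algebra, it is semisimple with primitive idempotents $\{e_i\}_{i=0}^d$, and $e_0$ is trivial because $\vphi(e_0) = 1 \neq 0$ and $\vphi(e_i) = 0$ for $i \neq 0$ (Definition \ref{def:e0}). Hence $\Psi_P$ is a character system over $\F$. Finally, $P$ is by construction the transition matrix from $\{e_i\}_{i=0}^d$ to $\{x_i\}_{i=0}^d$, so $P$ is the eigenmatrix of $\Psi_P$ by Definition \ref{def:P2}, completing the proof. I would streamline the AO computation by invoking, wherever possible, the already-established entrywise formulas (Lemmas \ref{lem:PAOpre}, \ref{lem:xie02}, \ref{lem:beixj}) applied in reverse, rather than redoing the bilinear-form argument from scratch.
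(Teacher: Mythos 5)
Your proof is correct, but it takes a genuinely different route from the paper's. The paper does not verify the axioms of Definition \ref{def:SCalgebra} directly for $\Psi_P$: it forms the symmetric idempotent system $\Phi = \Phi_P$, notes via Corollary \ref{cor:canonical} that $\Phi$ has first eigenmatrix $P$, applies Proposition \ref{prop:PsiPhi} to obtain the character system $\Psi_\Phi = ({\mathcal D}; \{A_i\}_{i=0}^d; \{E_i\}_{i=0}^d)$ with eigenmatrix $P$, and then checks that the linear map $e_i \mapsto E_i = \Delta_{i,i}$ is an algebra isomorphism ${\mathcal C} \to {\mathcal D}$ sending $x_i \mapsto A_i$; since the character-system axioms transport along isomorphisms, $\Psi_P$ is a character system with eigenmatrix $P$. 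You instead verify Definition \ref{def:SCalgebra}(i)--(iv) by hand, the only nontrivial point being (iii), which reduces to the identity $\sum_{\ell} P_{\ell,i}P_{\ell,j}(P^{-1})_{0,\ell} = \delta_{i,j}P_{0,i}$. I checked that your sketch of this step closes: writing $P^{\sf t} = HP^{-1}K$ with $H$, $K$ invertible diagonal, one gets $(P^{-1})_{0,\ell} = H_{0,0}^{-1}K_{\ell,\ell}^{-1}$ from column $0$ of $P$ being all ones, the sum collapses to $H_{j,j}H_{0,0}^{-1}(P^{-1}P)_{j,i}$, and the second normalization condition (column $0$ of $P^{-1}$ constant) identifies $H_{i,i}H_{0,0}^{-1}$ with $P_{0,i}$, which is nonzero by solidity. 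What the paper's route buys is economy: it reuses the machinery imported from \cite{NT:ips} in Section \ref{sec:known} (Lemmas \ref{lem:Ai}--\ref{lem:P0i}) and requires no new computation. What your route buys is self-containedness: it proves the lemma directly from the definitions of AO, normalized, and solid, without passing through idempotent systems, and it makes visible exactly where each hypothesis on $P$ is used.
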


\begin{proof}
Recall the idempotent system $\Phi = \Phi_P$ from Definition \ref{def:PhiR}
and the character system $\Psi_\Phi$ from Proposition \ref{prop:PsiPhi}.
Note by Corollary \ref{cor:canonical} that $\Phi$ has first eigenmatrix $P$.
By this and Proposition \ref{prop:PsiPhi},
$\Psi_\Phi$ has eigenmatrix $P$.
By construction
\[
  \Psi_\Phi = ({\mathcal D}; \{A_i\}_{i=0}^d; \{E_i\}_{i=0}^d),
\]
where $E_i = \Delta_{i,i}$ for $0 \leq i \leq d$,
and $\{A_i\}_{i=0}^d$ are from Lemma \ref{lem:Ai}.
Consider the $\F$-linear map $\pi : {\mathcal C} \to {\mathcal D}$
that sends $e_i \mapsto E_i$ for $0 \leq i \leq d$.
By construction, $\pi$ is an algebra isomorphism.
We have $\pi ( x_i ) = A_i$ for $0 \leq i \leq d$,
since $P$ is the transition matrix from $\{e_i\}_{i=0}^d$ to $\{x_i\}_{i=0}^d$
and also the transition matrix from $\{E_i\}_{i=0}^d$ to $\{A_i\}_{i=0}^d$.
Therefore $\Psi_P$ is a character system over $\F$,
and $\pi$ is an isomorphism of character systems from $\Psi_P$
to $\Psi_\Phi$.
The result follows.
\end{proof}

\begin{prop}    \label{prop:CSAON}    \samepage
\ifDRAFT {\rm prop:CSAON}. \fi
The following maps are inverses:
\begin{itemize}
\item[\rm (i)]
$\text{\rm AON}_d (\F) \to \text{\rm CS}_d (\F)$,
$P \mapsto [\Psi_P]$;
\item[\rm (ii)]
$\text{\rm CS}_d (\F) \to \text{\rm AON}_d (\F)$,
$[\Psi] \mapsto P_\Psi$.
\end{itemize}
\end{prop}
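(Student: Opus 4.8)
The plan is to show that each of the two composites of the maps (i), (ii) is the appropriate identity map, using only results already in hand. First I would observe that map (i) is well defined: given $P \in \text{\rm AON}_d(\F)$, Lemma \ref{lem:PsiP} asserts that $\Psi_P$ is a character system over $\F$, so $[\Psi_P] \in \text{\rm CS}_d(\F)$; moreover that lemma records that $\Psi_P$ has eigenmatrix $P$.

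Next I would trace the composite of (ii) after (i), namely $P \mapsto [\Psi_P] \mapsto P_{\Psi_P}$. Since the eigenmatrix of $\Psi_P$ is $P$ by Lemma \ref{lem:PsiP}, we get $P_{\Psi_P} = P$, so this composite is the identity on $\text{\rm AON}_d(\F)$. Then I would trace the composite of (i) after (ii), namely $[\Psi] \mapsto P_\Psi \mapsto [\Psi_{P_\Psi}]$. By Lemma \ref{lem:PsiP}, the character system $\Psi_{P_\Psi}$ has eigenmatrix $P_\Psi$, and by Definition \ref{def:P2} the character system $\Psi$ also has eigenmatrix $P_\Psi$. Since two character systems over $\F$ with the same eigenmatrix are isomorphic by Proposition \ref{prop:CalgP}, we conclude $[\Psi_{P_\Psi}] = [\Psi]$, so this composite is the identity on $\text{\rm CS}_d(\F)$. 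Hence (i) and (ii) are mutually inverse.

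A slightly shorter route would be to cite Proposition \ref{prop:main4}(ii), which already establishes that map (ii) is a bijection; it would then suffice to verify a single composite, say that (ii) after (i) is the identity, to deduce that (i) is the two-sided inverse of (ii). In any case I expect no real obstacle here: the substantive content is carried by Lemma \ref{lem:PsiP}, which constructs $\Psi_P$ and identifies its eigenmatrix, and by Proposition \ref{prop:CalgP}, which says the eigenmatrix is a complete isomorphism invariant of a character system. The only point requiring care is bookkeeping --- applying the identity ``$\Psi_P$ has eigenmatrix $P$'' in the correct direction within each composite.
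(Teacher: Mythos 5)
Your proposal is correct, and your ``slightly shorter route'' is precisely the paper's own proof: it cites Proposition \ref{prop:main4}(ii) for the bijectivity of the map in (ii), then uses Lemma \ref{lem:PsiP} to see that the composite of (i) followed by (ii) is the identity. Your primary route is a mild variant that avoids invoking Proposition \ref{prop:main4}(ii) by instead checking the second composite directly, using Lemma \ref{lem:PsiP} together with Proposition \ref{prop:CalgP} (the eigenmatrix is a complete isomorphism invariant); this is equally valid and self-contained, at the cost of one extra paragraph of bookkeeping.
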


\begin{proof}
By Proposition \ref{prop:main4} the map (ii) is a bijection.
Pick any AO normalized solid invertible $P \in \Matd$.
By Lemma \ref{lem:PsiP}, $\Psi_P$ has eigenmatrix $P$.
So the composition of  (i) and (ii) is the identity.
By these comments we get the result.
\end{proof}

Our next goal is to describe the inverse of the bijection in Proposition \ref{prop:main4}(iii).

\begin{lemma}    \label{lem:PhiPsi}     \samepage
\ifDRAFT {\rm lem:PhiPsi}. \fi
Let $\Psi = ({\mathcal C} ; \{x_i\}_{i=0}^d; \{e_i\}_{i=0}^d)$ denote a character system
over $\F$.
For $0 \leq i \leq d$ define  $E_i$, $E^*_i \in \text{\rm End}({\mathcal C})$
such that
\begin{align}
  E_i e_j &= \delta_{i,j} e_j,  & 
 E^*_i x_j &= \delta_{i,j} x_j      \label{eq:EiEsidef}
\end{align}
for $0 \leq j \leq d$.
Then the sequence
\[
   \Phi_\Psi = (\{E_i\}_{i=0}^d; \{E^*_i\}_{i=0}^d)
\]
is a symmetric idempotent system in $\text{\rm End} ({\mathcal C})$
whose first eigenmatrix is the eigenmatrix of $\Psi$.
\end{lemma}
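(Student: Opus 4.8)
The plan is to verify that $\Phi_\Psi$ satisfies the four conditions of Definition \ref{def:ips}, then exhibit an antiautomorphism of $\text{\rm End}({\mathcal C})$ making it symmetric, and finally identify its first eigenmatrix. First I would observe that each of $\{E_i\}_{i=0}^d$ and $\{E^*_i\}_{i=0}^d$ is a system of mutually orthogonal rank $1$ idempotents: the $E_i$ are the coordinate projections for the idempotent basis $\{e_i\}_{i=0}^d$, so $E_i E_j = \delta_{i,j} E_i$ and each $E_i$ has rank $1$; likewise the $E^*_i$ are the coordinate projections for the basis $\{x_i\}_{i=0}^d$. This gives Definition \ref{def:ips}(i),(ii). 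For conditions (iii),(iv), the cleanest route is to pick the basis $\{e_i\}_{i=0}^d$ of $\mathcal C$ to identify $\text{\rm End}({\mathcal C}) \cong \Matd$; then $E_i = \Delta_{i,i}$ and $E^*_i = P \Delta_{i,i} P^{-1}$, where $P = P_\Psi$ is the transition matrix from $\{e_i\}_{i=0}^d$ to $\{x_i\}_{i=0}^d$. Since $P$ is AO normalized solid invertible by hypothesis, Proposition \ref{prop:canonical2} (or Definition \ref{def:PhiR}) immediately gives that $(\{E_i\}_{i=0}^d;\{E^*_i\}_{i=0}^d)$ is an idempotent system in $\Matd$, identified with $\Phi_P$, and in particular (iii),(iv) hold.

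Next I would establish symmetry. Since $P$ is AO, Proposition \ref{prop:canonical3} tells us $\Phi_P$ is a symmetric idempotent system, so there is an antiautomorphism of $\Matd$ fixing each $E_i$ and $E^*_i$; transporting this back through the isomorphism $\text{\rm End}({\mathcal C}) \cong \Matd$ yields an antiautomorphism of $\text{\rm End}({\mathcal C})$ fixing each $E_i$, $E^*_i$. Thus $\Phi_\Psi$ is symmetric. Alternatively, and perhaps more transparently, one can use the nondegenerate symmetric bilinear form $\b{\;,\;}$ on $\mathcal C$ from Definition \ref{def:bilin}: the map sending $X \in \text{\rm End}({\mathcal C})$ to its adjoint with respect to $\b{\;,\;}$ is an antiautomorphism, and since $\{e_i\}_{i=0}^d$ is orthogonal with respect to $\b{\;,\;}$ by Lemma \ref{lem:beiej} and $\{x_i\}_{i=0}^d$ is orthogonal by \eqref{eq:bxixj}, this adjoint map fixes each projection $E_i$ and each $E^*_i$. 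Either argument closes the symmetry condition.

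Finally I would identify the first eigenmatrix. Recall from Definition \ref{def:P} that the first eigenmatrix of $\Phi_\Psi$ is the transition matrix from $\{E_i\}_{i=0}^d$ to the basis $\{A_i\}_{i=0}^d$ of $\mathcal M$ determined by $A_i E^*_0 E_0 = E^*_i E_0$. By Corollary \ref{cor:canonical}, the first eigenmatrix of $\Phi_P$ equals $P$; since $\Phi_\Psi$ corresponds to $\Phi_P$ under the chosen isomorphism $\text{\rm End}({\mathcal C}) \cong \Matd$, and the first eigenmatrix is an isomorphism invariant of a symmetric idempotent system (indeed its construction in Section \ref{sec:known} is purely algebraic), the first eigenmatrix of $\Phi_\Psi$ is $P = P_\Psi$, the eigenmatrix of $\Psi$. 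This is exactly the claimed conclusion.

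The main obstacle, I expect, is bookkeeping rather than conceptual: one must be careful that the identification $\text{\rm End}({\mathcal C}) \cong \Matd$ via $\{e_i\}_{i=0}^d$ really does send $E_i \mapsto \Delta_{i,i}$ and $E^*_i \mapsto P\Delta_{i,i}P^{-1}$ with the same $P$ that serves as the eigenmatrix of $\Psi$ — i.e. that the transition matrix from $\{e_i\}$ to $\{x_i\}$ is precisely the matrix conjugating the $\Delta_{i,i}$ to the projections onto $\F x_i$. Once this identification is pinned down, everything reduces to the already-proven facts about $\Phi_P$ (Propositions \ref{prop:canonical2}, \ref{prop:canonical3} and Corollary \ref{cor:canonical}), so no new computation is needed. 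I would state the identification explicitly as the first step of the proof to avoid any ambiguity.
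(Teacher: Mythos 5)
Your proposal is correct and follows essentially the same route as the paper: identify $\text{\rm End}({\mathcal C})$ with $\Matd$ via the basis $\{e_i\}_{i=0}^d$, observe that this sends $E_i \mapsto \Delta_{i,i}$ and $E^*_i \mapsto P\Delta_{i,i}P^{-1}$ with $P = P_\Psi$, and reduce everything to the already-proven facts about $\Phi_P$. The only slip is that $P$ being AO normalized solid invertible is not ``by hypothesis'' but is the content of Propositions \ref{prop:Psolid} and \ref{prop:PAO}; with that citation corrected, your argument matches the paper's.
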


\begin{proof}
Recall that $P=P_\Psi$ is the transition matrix from the basis $\{e_i\}_{i=0}^d$ of $\mathcal C$
to the basis $\{x_i\}_{i=0}^d$ of $\mathcal C$.
By Propositions \ref{prop:Psolid}, \ref{prop:PAO}, $P$ is AO normalized solid invertible.
By this and Theorem \ref{thm:main5},
\[
  \Phi_P = ( \{\Delta_{i,i} \}_{i=0}^d; \{ P \Delta_{i,i} P^{-1} \}_{i=0}^d)
\]
is a symmetric idempotent system in $\Matd$.
We show that $\Phi_\Psi$ is a symmetric idempotent system in $\text{\rm End}({\mathbb C})$
that is isomorphic to $\Phi_P$.
Let $\pi : \text{\rm End}({\mathcal C}) \to \Matd$ denote the algebra isomorphism that
sends each $A \in \text{\rm End}({\mathcal C})$ to the matrix in $\Matd$ that represents $A$
with respect to the basis $\{e_i\}_{i=0}^d$.
By the equation on the left in \eqref{eq:EiEsidef}, $\pi (E_i) = \Delta_{i,i}$ for $0 \leq i \leq d$.
By the equation on the right in \eqref{eq:EiEsidef} along with linear algebra
we obtain $\pi (E^*_i) = P \Delta_{i,i} P^{-1}$ for $0 \leq i \leq d$.
By these comments, 
$\Phi_\Psi$ is an idempotent system in $\text{\rm End}( {\mathcal C})$,
and $\pi$ is an isomorphism of idempotent systems from $\Phi_\Psi$ to
$\Phi_P$.
The idempotent system $\Phi_\Psi$ is symmetric since $\Phi_P$ is symmetric.
By Corollary \ref{cor:canonical}, $\Phi_P$ has first eigenmatrix $P$.
Thus $\Phi_\Psi$ has first eigenmatrix $P$.
\end{proof}

By Corollary \ref{cor:uniquePhi}, Proposition \ref{prop:CalgP}, 
and Lemma \ref{lem:PhiPsi},
we have a map
$\text{\rm CS}_d (\F) \to \text{\rm SIS}_d (\F)$,
$[\Psi] \mapsto [\Phi_\Psi]$.

\begin{prop}    \label{prop:SISCS}     \samepage
\ifDRAFT {\rm cor:SISCS}. \fi
The following maps are inverses:
\begin{itemize}
\item[\rm (i)]
$\text{\rm SIS}_d (\F) \to \text{\rm CS}_d (\F)$,
 $[\Phi] \mapsto [ \Psi_\Phi ]$;
\item[\rm (ii)]
$\text{\rm CS}_d (\F) \to \text{\rm SIS}_d (\F)$,
$[\Psi] \mapsto [ \Phi_\Psi]$.
\end{itemize}
\end{prop}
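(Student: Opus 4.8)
The plan is to exploit the fact, already established in the excerpt, that the first eigenmatrix is a complete isomorphism invariant for symmetric idempotent systems (Corollary \ref{cor:uniquePhi}) and that the eigenmatrix is a complete isomorphism invariant for character systems (Proposition \ref{prop:CalgP}), together with the observation that both constructions $\Phi \mapsto \Psi_\Phi$ and $\Psi \mapsto \Phi_\Psi$ preserve these matrices. In other words, the whole argument is a matter of tracing eigenmatrices through the two constructions.

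First I would recall from Proposition \ref{prop:main4}(iii) that the map in (i), $[\Phi] \mapsto [\Psi_\Phi]$, is already known to be a bijection. Hence it suffices to verify that the composition of (ii) followed by (i) is the identity on $\text{\rm CS}_d(\F)$: once that is known, applying the inverse of the bijection (i) on the left forces (ii) to coincide with $(\text{i})^{-1}$, and in particular the composition of (i) followed by (ii) is then the identity on $\text{\rm SIS}_d(\F)$ as well.

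Next I would fix a character system $\Psi = ({\mathcal C}; \{x_i\}_{i=0}^d; \{e_i\}_{i=0}^d)$ with eigenmatrix $P_\Psi$ and trace the eigenmatrices through $\Psi \mapsto \Phi_\Psi \mapsto \Psi_{\Phi_\Psi}$. By Lemma \ref{lem:PhiPsi}, the symmetric idempotent system $\Phi_\Psi$ has first eigenmatrix equal to $P_\Psi$ on the nose. By Proposition \ref{prop:PsiPhi}, the character system $\Psi_{\Phi_\Psi}$ associated with $\Phi_\Psi$ has eigenmatrix equal to the first eigenmatrix of $\Phi_\Psi$, namely $P_\Psi$ again. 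Therefore $\Psi_{\Phi_\Psi}$ and $\Psi$ have the same eigenmatrix, so they are isomorphic by Proposition \ref{prop:CalgP}; that is, $[\Psi_{\Phi_\Psi}] = [\Psi]$. This establishes that (i)$\circ$(ii) is the identity on $\text{\rm CS}_d(\F)$, and the conclusion follows by the bijectivity argument of the previous paragraph.

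I do not anticipate any real obstacle here, since all the content has been pushed into the earlier eigenmatrix lemmas; the only point requiring care is to make sure that the first eigenmatrix of $\Phi_\Psi$ furnished by Lemma \ref{lem:PhiPsi} is literally the eigenmatrix of $\Psi$, and not merely diagonally equivalent to it — which is exactly what that lemma asserts — so that the equality of isomorphism classes, rather than just compatibility up to diagonal equivalence, can be invoked. (One could alternatively run the symmetric argument starting from a symmetric idempotent system $\Phi$, using that $\Psi_\Phi$ has eigenmatrix $P_\Phi$, that $\Phi_{\Psi_\Phi}$ then has first eigenmatrix $P_\Phi$, and Corollary \ref{cor:uniquePhi}; but this is redundant once (i) is known to be a bijection.)
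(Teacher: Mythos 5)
Your proposal is correct and follows essentially the same route as the paper: the paper's own proof simply cites Corollary \ref{cor:uniquePhi} and Proposition \ref{prop:CalgP}, relying exactly on the fact that both constructions preserve the (first) eigenmatrix, which is a complete isomorphism invariant on each side. Your additional shortcut of checking only one composition via the known bijectivity of (i) is a harmless streamlining of the same argument.
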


\begin{proof}
By Corollary \ref{cor:uniquePhi} and Proposition \ref{prop:CalgP}.
\end{proof}

In summary, we have shown that the three sets
$\text{\rm AON}_d (\F)$,
$\text{\rm SIS}_d (\F)$,
$\text{\rm CS}_d (\F)$
are mutually in bijection,
and we described the bijections involved.
They are displayed in the following diagram.
\[
\xymatrix@C=60pt@R=60pt{
   \text{\rm AON}_d (\F)
  \ar[r]^{ \;\;  P \; \rightarrow \;  [ \Phi_P ]  }
  \ar[d]_{ \text{\rm \normalsize id} \; }
&
   \text{\rm SIS}_d (\F)
   \ar[l]^{ P_\Phi \; \leftarrow \; [\Phi]  }
  \ar[d]^-{ \footnotesize 
       \begin{matrix}  
         \left[ \Phi \right]  \rule{0mm}{2.3ex} \\ \downarrow \rule{0mm}{2.5ex} \\  [\Psi_\Phi] \rule{0mm}{2.5ex}
      \end{matrix} 
   }
\\
  \text{\rm AON}_d (\F)
  \ar[u]
  \ar[r]_{ \;\;\;\;  P \;\; \rightarrow \; [\Psi_P] }
&
  \text{\rm CS}_d (\F)
  \ar[u]^{   \footnotesize 
       \begin{matrix}  
         \left[ \Phi_\Psi \right]  \rule{0mm}{2.3ex} \\ \uparrow \rule{0mm}{2.5ex} \\  [\Psi] \rule{0mm}{2.5ex}
      \end{matrix} 
   }
  \ar[l]_{ \; P_\Psi \; \leftarrow \; [\Psi] }
}
\]

\section{Duality}
\label{sec:dual}

Recall the sets $\text{\rm AON}_d (\F)$, $\text{\rm SIS}_d (\F)$
from Definition \ref{def:AONSIPS}
and the set $\text{\rm CS}_d (\F)$ from Definition \ref{def:CS}.
So far, we have shown that these three sets are mutually in bijection,
and we described the bijections involved.
For an idempotent system $\Phi$ we have its dual $\Phi^*$ from below Definition \ref{def:ips}.
By construction we have the bijection
$\text{\rm SIS}_d (\F) \to \text{\rm SIS}_d (\F)$,
$[\Phi] \mapsto [\Phi^*]$,
which we call the {\em duality map} on $\text{\rm SIS}_d (\F)$.
The sets $\text{\rm AON}_d (\F)$, $\text{\rm CS}_d (\F)$
each inherit a duality map via the above bijections.
In this section we describe these duality maps in detail.

\begin{defi}    \label{def:dualityAON}    \samepage
\ifDRAFT {\rm def:dualityAON}. \fi
By the {\em duality map on $\text{\rm AON}_d (\F)$}
we mean the composition
\[
 \xymatrix@C=50pt{
  \text{\rm AON}_d (\F)
  \ar[r]_{ P \; \mapsto \; [\Phi_P] }
&
  \text{\rm SIS}_d (\F)
  \ar[r]_{ [\Phi] \; \mapsto \; [\Phi^*] }
&
  \text{\rm SIS}_d (\F)
  \ar[r]_{ [ \Phi ] \; \mapsto \; P_\Phi }
&
 \text{\rm AON}_d (\F).
}
\]
\end{defi}

\begin{theorem}    \label{thm:dualityAON}    \samepage
\ifDRAFT {\rm thm:dualityAON}. \fi
The duality map $\text{\rm AON}_d (\F) \to \text{\rm AON}_d (\F)$
sends $P \mapsto \nu P^{-1}$,
where $\nu^{-1}$ is the $(0,0)$-entry of $P^{-1}$.
\end{theorem}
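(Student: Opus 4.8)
The plan is to unwind the definition of the duality map on $\text{\rm AON}_d(\F)$ (Definition \ref{def:dualityAON}): for $P \in \text{\rm AON}_d(\F)$ it sends $P$ to the first eigenmatrix $P_{(\Phi_P)^*}$ of the dual idempotent system $(\Phi_P)^*$. So the task reduces to identifying $P_{(\Phi_P)^*}$, and the key observation is that $(\Phi_P)^*$ is captured, up to isomorphism, by the matrix $P^{-1}$. Concretely, I would apply Lemma \ref{lem:Rinv} with $R = P$: the map $A \mapsto P A P^{-1}$ is an isomorphism of idempotent systems from $\Phi_{P^{-1}}$ to $(\Phi_P)^*$, and $P^{-1}$ is solid invertible. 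Since $P$ is AO, taking inverses in a diagonal equivalence $P^{\sf t} = H P^{-1} K$ and using $(P^{\sf t})^{-1} = (P^{-1})^{\sf t}$ shows $P^{-1}$ is AO; hence $\Phi_{P^{-1}}$ is a symmetric idempotent system by Proposition \ref{prop:canonical3}. As $(\Phi_P)^*$ and $\Phi_{P^{-1}}$ are isomorphic symmetric idempotent systems, Corollary \ref{cor:uniquePhi} gives $P_{(\Phi_P)^*} = P_{\Phi_{P^{-1}}}$.

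Next I would pin down $P_{\Phi_{P^{-1}}}$ by normalization. By Corollary \ref{cor:canonical}, $\Phi_{P^{-1}} \cong \Phi_{P_{\Phi_{P^{-1}}}}$ with $P_{\Phi_{P^{-1}}}$ normalized; hence by Proposition \ref{prop:RS} the matrix $P_{\Phi_{P^{-1}}}$ is diagonally equivalent to $P^{-1}$, and by Proposition \ref{prop:normalized} it is the unique normalized matrix in that diagonal equivalence class. It therefore suffices to verify that $\nu P^{-1}$ is normalized and diagonally equivalent to $P^{-1}$, where $\nu^{-1} = (P^{-1})_{0,0}$. Diagonal equivalence is immediate ($\nu P^{-1} = (\nu I) P^{-1} I$). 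For normalization, since $P$ is normalized, column $0$ of $P$ is all $1$'s and column $0$ of $P^{-1}$ is constant, so all its entries equal $(P^{-1})_{0,0} = \nu^{-1}$; thus column $0$ of $\nu P^{-1}$ is all $1$'s, while column $0$ of $(\nu P^{-1})^{-1} = \nu^{-1} P$ has all entries $\nu^{-1}$, hence is constant. So $\nu P^{-1}$ is normalized, and therefore $\nu P^{-1} = P_{\Phi_{P^{-1}}} = P_{(\Phi_P)^*}$, which is exactly the assertion. (Alternatively, applying Lemma \ref{lem:normalized} with $R = P^{-1}$ forces $H = (\nu/K_{0,0})I$ and $K = K_{0,0} I$, so $H P^{-1} K = \nu P^{-1}$, giving the same conclusion.)

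The only place any real thought is needed is the first step: seeing through Lemma \ref{lem:Rinv} that $(\Phi_P)^*$ corresponds to the matrix $P^{-1}$, and then noticing that $P^{-1}$ is not itself normalized (unless $\nu = 1$), so that one must rescale by $\nu$ to re-enter $\text{\rm AON}_d(\F)$. After that, the proof is just bookkeeping with the uniqueness statements in Corollary \ref{cor:uniquePhi} and Proposition \ref{prop:normalized}. As a consistency check, in the $d=1$ case of Lemma \ref{lem:d1P} one has $\nu = k+1$ and $\nu P^{-1} = P$, so the duality map fixes these matrices, as expected.
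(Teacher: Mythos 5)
Your proof is correct, but it follows a genuinely different route from the paper's. The paper's argument is essentially two lines: it notes via Lemma \ref{lem:P0i}(iii) that the scalar $\nu$ in the statement coincides with the size of $\Phi=\Phi_P$, and then invokes Definition \ref{def:P} (the second eigenmatrix $Q_\Phi$ is \emph{by definition} the first eigenmatrix of $\Phi^*$) together with the imported identity $PQ=\nu I$ of Lemma \ref{lem:PQ}(i) to conclude $P_{\Phi^*}=Q=\nu P^{-1}$ immediately. You instead avoid Lemma \ref{lem:PQ}(i) as the computational engine and work inside the classification machinery of the present paper: you replace $(\Phi_P)^*$ by the isomorphic system $\Phi_{P^{-1}}$ via Lemma \ref{lem:Rinv}, check that $P^{-1}$ is AO solid invertible, and then characterize $P_{(\Phi_P)^*}$ as the unique normalized representative (Proposition \ref{prop:normalized}) of the diagonal equivalence class of $P^{-1}$ (Proposition \ref{prop:RS}, Corollaries \ref{cor:canonical} and \ref{cor:uniquePhi}), verifying by hand that $\nu P^{-1}$ is normalized and lies in that class. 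What the paper's route buys is brevity, at the cost of leaning on facts cited from \cite{NT:ips}; what your route buys is that the actual computation of the dual eigenmatrix is carried out with the normalization and uniqueness results proved in Sections \ref{sec:solid}--\ref{sec:known}, which makes the role of the rescaling by $\nu$ (re-entering $\text{\rm AON}_d(\F)$ after inverting) conceptually transparent. All the steps you use are available and correctly applied, including the observation that column $0$ of $P^{-1}$ is constant equal to $(P^{-1})_{0,0}$ because $P$ is normalized, and the $d=1$ sanity check is consistent with Lemma \ref{lem:d1P}.
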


\begin{proof}
Pick an AO normalized solid invertible matrix $P \in \Matd$,
and set $\Phi = \Phi_P$.
By Definition \ref{def:dualityAON} the duality map
$\text{\rm AON}_d (\F) \to \text{\rm AON}_d (\F)$
sends $P \mapsto P_{\Phi^*}$.
So it suffices to show that $\Phi^*$ has first eigenmatrix $\nu P^{-1}$.
By Proposition \ref{prop:inverse}, $\Phi$ has first eigenmatrix $P$.
Note by Lemma \ref{lem:P0i}(iii) that the scalar $\nu$ is equal to
the size of $\Phi$ from Definition \ref{def:ISnu}.
By Definition \ref{def:P} and Lemma \ref{lem:PQ}(i),
the first eigenmatrix of $\Phi^*$ is $\nu P^{-1}$.
The result follows.
\end{proof}

\begin{defi}    \label{def:dualP}    \samepage
\ifDRAFT {\rm def:dualP}. \fi
Let $P$ denote an AO normalized solid invertible matrix in $\Matd$.
By the {\em dual of $P$} we mean the matrix $\nu P^{-1}$ from
Theorem \ref{thm:dualityAON}.
Let $P^*$ denote the dual of $P$.
\end{defi}

\begin{defi}     \label{def:dualityCS}    \samepage
\ifDRAFT {\rm def:dualityCS}. \fi
By the {\em duality map on $\text{\rm CS}_d (\F)$}
we mean the composition
\[
 \xymatrix@C=50pt{
  \text{\rm CS}_d (\F)
  \ar[r]_{ [\Psi] \; \mapsto \; [\Phi_\Psi] }
&
  \text{\rm SIS}_d (\F)
  \ar[r]_{ [\Phi] \; \mapsto \; [\Phi^*] }
&
  \text{\rm SIS}_d (\F)
  \ar[r]_{ [ \Phi ] \; \mapsto \; [\Psi_\Phi] }
&
 \text{\rm CS}_d (\F).
}
\]
\end{defi}

\begin{defi}    {\rm \cite[Definition 3.2]{Egge} }
\label{def:dualPsi}    \samepage
\ifDRAFT {\rm def:dualPsi}. \fi
Let $\Psi$ (resp.\ $\Psi'$) denote a character system over $\F$ 
with eigenmatrix $P$ (resp.\ $P'$).
We say that $\Psi$ and $\Psi'$ are {\em dual} whenever $P P' \in \F I$.
\end{defi}

\begin{lemma}    \label{lem:dualPsipre}     \samepage
\ifDRAFT {\rm lem:dualPsipre}. \fi
Referring to Definition \ref{def:dualPsi},
assume that $\Psi$, $\Psi'$ are dual.
Then $P' = P^*$.
\end{lemma}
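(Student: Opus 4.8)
The plan is to unwind the definitions and match up the relevant eigenmatrices. Let $\Psi$, $\Psi'$ be dual character systems with eigenmatrices $P$, $P'$ respectively, so by Definition~\ref{def:dualPsi} there is a scalar $\lambda \in \F$ with $PP' = \lambda I$; in particular $P' = \lambda P^{-1}$. First I would note that $P'$ is an AO normalized solid invertible matrix (by Propositions~\ref{prop:Psolid} and~\ref{prop:PAO} applied to $\Psi'$, or by the bijection in Proposition~\ref{prop:CSAON}), and that $P^* = \nu P^{-1}$ where $\nu^{-1}$ is the $(0,0)$-entry of $P^{-1}$, by Definition~\ref{def:dualP} and Theorem~\ref{thm:dualityAON}. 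Both $P'$ and $P^*$ are scalar multiples of $P^{-1}$, so the entire task reduces to showing the two scalars agree, i.e.\ $\lambda = \nu$.

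The key step is then to pin down $\lambda$ using the normalization of $P'$. Since $P'$ is normalized, column $0$ of $P'$ consists of all $1$'s, by Definition~\ref{def:normalized}(i). On the other hand $P' = \lambda P^{-1}$, so the $(i,0)$-entry of $P'$ equals $\lambda (P^{-1})_{i,0}$ for $0 \leq i \leq d$. By Lemma~\ref{lem:PAOpre}(i) (equivalently Lemma~\ref{lem:P0i}(iii)) applied to $\Psi$, we have $(P^{-1})_{i,0} = \nu^{-1}$ for all $i$, where $\nu^{-1}$ is the $(0,0)$-entry of $P^{-1}$ and $\nu$ is the size of $\Psi$. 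Hence $1 = \lambda \nu^{-1}$, giving $\lambda = \nu$. Therefore $P' = \nu P^{-1} = P^*$, as desired.

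I do not expect a serious obstacle here; the only thing to be careful about is bookkeeping with which character system the scalar $\nu$ is attached to. The scalar $\nu^{-1}$ in Definition~\ref{def:dualP} is defined as the $(0,0)$-entry of $P^{-1}$, which is exactly the quantity appearing in Lemma~\ref{lem:PAOpre}(i) for $\Psi$, so there is no ambiguity. An alternative, slightly more symmetric route would be to use Lemma~\ref{lem:xie02}(i) for $\Psi'$ directly (column $0$ of $P'$ is all $1$'s because $x'_0 = 1 = \sum_i e'_i$) rather than invoking normalization, but this amounts to the same computation. Either way the proof is a one-paragraph matching of scalars once the ingredients are assembled.
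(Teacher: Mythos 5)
Your proof is correct and matches the paper's intended argument: the paper's one-line proof cites Lemma \ref{lem:xie02}(i) (column $0$ of $P'$ is all $1$'s) and Lemma \ref{lem:PAOpre}(i) ($(P^{-1})_{i,0}=\nu^{-1}$), which is exactly the scalar-matching computation you carry out. No issues.
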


\begin{proof}
Use Lemmas \ref{lem:xie02}(i) and \ref{lem:PAOpre}(i).
\end{proof}

The following lemma is a variation on  a result by Kawada, see \cite[Theorem II.5.9]{BI}.

\begin{lemma}    \label{lem:dualPsi}    \samepage
\ifDRAFT {\rm lem:dualPsi}. \fi
For each character system over $\F$,
its dual exists and is unique up to isomorphism of character systems.
\end{lemma}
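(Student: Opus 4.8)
The plan is to reduce the statement entirely to facts already established about $\text{\rm AON}_d(\F)$ and $\text{\rm CS}_d(\F)$, using the bijection $\text{\rm CS}_d(\F) \to \text{\rm AON}_d(\F)$, $[\Psi] \mapsto P_\Psi$ from Proposition \ref{prop:main4}(ii) (equivalently, Proposition \ref{prop:CSAON}). First I would fix a character system $\Psi$ over $\F$ with eigenmatrix $P = P_\Psi$. By Propositions \ref{prop:Psolid} and \ref{prop:PAO}, $P$ is AO normalized solid invertible, so $P \in \text{\rm AON}_d(\F)$, and hence its dual $P^* = \nu P^{-1}$ (Definition \ref{def:dualP}) again lies in $\text{\rm AON}_d(\F)$ by Theorem \ref{thm:dualityAON}.

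For existence, I would apply the bijection of Proposition \ref{prop:CSAON}(i) to $P^*$: the character system $\Psi' := \Psi_{P^*}$ constructed in Lemma \ref{lem:PsiP} has eigenmatrix exactly $P^*$. Then $P P' = P \cdot \nu P^{-1} = \nu I \in \F I$, so $\Psi$ and $\Psi'$ are dual in the sense of Definition \ref{def:dualPsi}. This produces a dual of $\Psi$. For uniqueness up to isomorphism of character systems, suppose $\Psi'$ and $\Psi''$ are both dual to $\Psi$, with eigenmatrices $P'$ and $P''$ respectively. By Lemma \ref{lem:dualPsipre}, $P' = P^* = P''$. Two character systems with the same eigenmatrix are isomorphic by Proposition \ref{prop:CalgP}, so $\Psi' \cong \Psi''$ as character systems.

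I do not expect any real obstacle here: the two preparatory lemmas (Lemma \ref{lem:PsiP} for existence and Lemma \ref{lem:dualPsipre} together with Proposition \ref{prop:CalgP} for uniqueness) have already done all the work, and the argument is just assembling them. The one point requiring a line of care is confirming that Definition \ref{def:dualPsi}'s condition $PP' \in \F I$ matches the normalization built into Definition \ref{def:dualP} — that is, that the scalar in $PP' = \nu I$ is the one whose reciprocal is the $(0,0)$-entry of $P^{-1}$; this is precisely what Lemma \ref{lem:dualPsipre} records (via Lemmas \ref{lem:xie02}(i) and \ref{lem:PAOpre}(i)), so invoking it suffices.
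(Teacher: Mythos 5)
Your proposal is correct and follows essentially the same route as the paper: existence via Theorem \ref{thm:dualityAON} and Lemma \ref{lem:PsiP} applied to $P^*$, and uniqueness via Lemma \ref{lem:dualPsipre} combined with Proposition \ref{prop:CalgP}. The only difference is that you spell out the details slightly more explicitly (e.g., the check that $PP^* = \nu I$), which the paper leaves implicit.
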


\begin{proof}
Concerning existence, let $\Psi$ denote a character system over $\F$ 
with eigenmatrix $P$.
By Theorem \ref{thm:dualityAON},
$P^*$ is AO normalized solid invertible,
so by Lemma \ref{lem:PsiP}, $\Psi_{P^*}$ is a character system over $\F$
that has eigenmatrix $P^*$.
By construction $P P^* \in \F I$ so $\Psi$, $\Psi_{P^*}$ are dual.
We have shown existence.
The uniqueness assertion follows from Proposition \ref{prop:CalgP} and
Lemma \ref{lem:dualPsipre}.
\end{proof}

\begin{defi}    \label{def:dualclass}     \samepage
\ifDRAFT {\rm def:dualclass}. \fi
A pair of isomorphism classes in $\text{\rm CS}_d (\F)$ are said to be
{\em dual} whenever each character system in the first isomorphism class
is dual to each character system in the second isomorphism class.
\end{defi}

\begin{lemma}    \label{lem:dualPsi2}    \samepage
\ifDRAFT {\rm lem:dualPsi2}. \fi
For each isomorphism class in $\text{\rm CS}_d (\F)$ there exists a
unique dual isomorphism class in $\text{\rm CS}_d (\F)$.
\end{lemma}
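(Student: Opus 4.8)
The plan is to transport the question to the level of eigenmatrices, where the duality relation becomes an elementary matrix identity. First I would invoke Proposition \ref{prop:CalgP}, which says that a character system over $\F$ is determined up to isomorphism by its eigenmatrix, together with Definition \ref{def:dualPsi}, which says that character systems $\Psi$, $\Psi'$ with eigenmatrices $P$, $P'$ are dual exactly when $P P' \in \F I$. Combining these, the relation ``$\Psi$ is dual to $\Psi'$'' depends only on the isomorphism classes $[\Psi]$ and $[\Psi']$; consequently, for a pair of isomorphism classes in $\text{\rm CS}_d(\F)$, being dual in the sense of Definition \ref{def:dualclass} is the same as having \emph{some} representative of one dual to \emph{some} representative of the other.

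Granting this reduction, existence is immediate: given an isomorphism class, I would pick a representative $\Psi$ with eigenmatrix $P$ and apply Lemma \ref{lem:dualPsi} to produce a dual character system $\Psi'$. By Lemma \ref{lem:dualPsipre} the eigenmatrix of $\Psi'$ is $P^* = \nu P^{-1}$, and $P P^* = \nu I \in \F I$, so $[\Psi]$ and $[\Psi']$ form a dual pair. For uniqueness, suppose $[\Psi'']$ is another isomorphism class dual to $[\Psi]$. Then $\Psi$ is dual to $\Psi''$, so Lemma \ref{lem:dualPsipre} forces the eigenmatrix of $\Psi''$ to equal $P^*$, which is also the eigenmatrix of $\Psi'$. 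By Proposition \ref{prop:CalgP}, $\Psi''$ and $\Psi'$ are then isomorphic, hence $[\Psi''] = [\Psi']$.

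I do not expect a genuine obstacle here; the one point that requires care is the reduction in the first paragraph, namely confirming that ``dual'' is well posed as a relation between isomorphism classes — insensitive to the choice of representative on each side — which is exactly what Proposition \ref{prop:CalgP} supplies in combination with the fact that the condition $PP'\in\F I$ only sees the eigenmatrices. Everything else is a direct application of Lemmas \ref{lem:dualPsi} and \ref{lem:dualPsipre}.
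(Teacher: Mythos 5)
Your proposal is correct and follows essentially the same route as the paper, whose proof is simply ``By Lemma \ref{lem:dualPsi}''; your expansion (existence from Lemma \ref{lem:dualPsi}, uniqueness via Lemma \ref{lem:dualPsipre} and Proposition \ref{prop:CalgP}, and the observation that duality is well posed on isomorphism classes because the eigenmatrix is an isomorphism invariant) is exactly the content implicitly packaged in that citation.
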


\begin{proof}
By Lemma \ref{lem:dualPsi}.
\end{proof}

\begin{theorem}    \label{thm:dualityCS}    \samepage
\ifDRAFT {\rm thm:dualityCS}. \fi
The duality map $\text{\rm CS}_d (\F) \to \text{\rm CS}_d (\F)$
sends each isomorphism class in $\text{\rm CS}_d (\F)$ to the dual isomorphism
class in $\text{\rm CS}_d (\F)$.
\end{theorem}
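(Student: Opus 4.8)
The plan is to trace an arbitrary character system through the three maps that make up the duality map on $\text{\rm CS}_d(\F)$ (Definition \ref{def:dualityCS}), keeping track of eigenmatrices at each stage, and then to recognize the outcome as the dual isomorphism class by means of Definition \ref{def:dualPsi}.

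First I would fix a character system $\Psi$ over $\F$ with eigenmatrix $P$. By Lemma \ref{lem:PhiPsi}, the symmetric idempotent system $\Phi_\Psi$ has first eigenmatrix $P$; since $\Phi_\Psi$ is symmetric, so is its dual $\Phi_\Psi^*$, and $[\Phi_\Psi^*]\in\text{\rm SIS}_d(\F)$. The next step is to compute the first eigenmatrix of $\Phi_\Psi^*$. By Definition \ref{def:P} this is the second eigenmatrix $Q$ of $\Phi_\Psi$, and by Lemma \ref{lem:PQ}(i) we have $PQ=\nu I$, where $\nu$ is the size of $\Phi_\Psi$; by Lemma \ref{lem:P0i}(iii) the scalar $\nu^{-1}$ is the $(0,0)$-entry of $P^{-1}$. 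Hence the first eigenmatrix of $\Phi_\Psi^*$ is $\nu P^{-1}$, which is exactly $P^*$ in the sense of Definition \ref{def:dualP}.

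Then I would apply Proposition \ref{prop:PsiPhi} to $\Phi_\Psi^*$: the character system $\Psi_{\Phi_\Psi^*}$ has eigenmatrix equal to the first eigenmatrix of $\Phi_\Psi^*$, namely $P^*$. By Definition \ref{def:dualityCS} the duality map on $\text{\rm CS}_d(\F)$ sends $[\Psi]\mapsto[\Psi_{\Phi_\Psi^*}]$. Since the eigenmatrices of $\Psi$ and $\Psi_{\Phi_\Psi^*}$ are $P$ and $P^*=\nu P^{-1}$, we get $PP^*=\nu I\in\F I$, so by Definition \ref{def:dualPsi} the character systems $\Psi$ and $\Psi_{\Phi_\Psi^*}$ are dual. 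Finally, using Proposition \ref{prop:CalgP} (isomorphic character systems have the same eigenmatrix, and by Definition \ref{def:dualPsi} duality depends only on the eigenmatrices) together with the existence and uniqueness in Lemma \ref{lem:dualPsi2}, I would conclude that $[\Psi_{\Phi_\Psi^*}]$ is precisely the dual isomorphism class of $[\Psi]$ in the sense of Definition \ref{def:dualclass}. This proves the theorem.

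I expect the only genuine subtlety to be the identification in the second paragraph: that the first eigenmatrix of the dual system $\Phi_\Psi^*$ is $\nu P^{-1}$ with the same scalar $\nu$ that enters the definition of $P^*$ in Theorem \ref{thm:dualityAON} and Definition \ref{def:dualP}; this is where Lemmas \ref{lem:PQ}(i) and \ref{lem:P0i}(iii) must be combined carefully. Everything else is a routine chaining of definitions and of the bijections already established in Section \ref{sec:SISCS}. As an alternative, one could instead observe that the bijection $\text{\rm CS}_d(\F)\to\text{\rm AON}_d(\F)$, $[\Psi]\mapsto P_\Psi$, intertwines the duality maps on $\text{\rm CS}_d(\F)$ and $\text{\rm AON}_d(\F)$ (since the map $\text{\rm CS}_d(\F)\to\text{\rm AON}_d(\F)$ factors through $\text{\rm SIS}_d(\F)$, and both duality maps are defined by conjugating the duality map on $\text{\rm SIS}_d(\F)$), and then invoke Theorem \ref{thm:dualityAON}; but the direct computation above is cleaner to write out.
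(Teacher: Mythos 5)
Your argument is correct, and it takes a genuinely different route from the paper's. You push $[\Psi]$ directly through $\text{\rm SIS}_d(\F)$: compute the first eigenmatrix of $\Phi_\Psi$, identify the first eigenmatrix of $\Phi_\Psi^*$ as $Q=\nu P^{-1}=P^*$ via Lemma \ref{lem:PQ}(i) and Lemma \ref{lem:P0i}(iii), apply Proposition \ref{prop:PsiPhi} to get that $\Psi_{\Phi_\Psi^*}$ has eigenmatrix $P^*$, and conclude from $PP^*=\nu I$ and Definitions \ref{def:dualPsi}, \ref{def:dualclass} together with Lemma \ref{lem:dualPsi2}. The paper instead disposes of the theorem in two sentences by factoring the duality map on $\text{\rm CS}_d(\F)$ through $\text{\rm AON}_d(\F)$: the composition \eqref{eq:comp} sends each class to its dual essentially by construction (Lemma \ref{lem:PsiP} plus Definition \ref{def:dualPsi}), and Proposition \ref{prop:main4}(i) identifies that composition with the duality map of Definition \ref{def:dualityCS}. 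The alternative you sketch in your last sentence is precisely the paper's proof. Your direct computation costs more writing but makes the role of the scalar $\nu$ and the passage through $\Phi_\Psi^*$ completely explicit, in effect re-deriving Theorem \ref{thm:dualityAON} inside the argument; the paper's route buys brevity by reusing Theorem \ref{thm:dualityAON} and the commutativity of the diagram of bijections already established in Section \ref{sec:SISCS}. Either way the content is the same, and no step of yours fails.
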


\begin{proof}
By construction the composition
\begin{equation}              \label{eq:comp}
\xymatrix@C=45pt{
 \text{\rm CS}_d (\F)
 \ar[r]_{ [\Psi]  \; \mapsto \; P_\Psi }
&
 \text{\rm AON}_d (\F)
 \ar[r]_{ P \; \mapsto \; P^* }
&
 \text{\rm AON}_d (\F)
 \ar[r]_{ P \; \mapsto \; [ \Psi_P ] }
&
 \text{\rm CS}_d (\F)
}
\end{equation}
sends each isomorphism class in $\text{\rm CS}_d (\F)$
to the dual isomorphism class in $\text{\rm CS}_d (\F)$.
By Proposition \ref{prop:main4}(i) and Definition \ref{def:dualityCS}
the map \eqref{eq:comp} is equal to the duality map on $\text{\rm CS}_d (\F)$.
The result follows.
\end{proof}

\section{Acknowledgement}

The authors thank Harvey Blau for giving the paper a close reading and offering many
valuable suggestions.

{
\small

}

\bigskip\bigskip\noindent
Kazumasa Nomura\\
Tokyo Medical and Dental University\\
Kohnodai, Ichikawa, 272-0827 Japan\\
email: knomura@pop11.odn.ne.jp

\bigskip\noindent
Paul Terwilliger\\
Department of Mathematics\\
University of Wisconsin\\
480 Lincoln Drive\\ 
Madison, Wisconsin, 53706 USA\\
email: terwilli@math.wisc.edu

\bigskip\noindent
{\bf Keywords.}
Association scheme,  
Bose-Mesner algebra, character algebra, idempotent system. 
\\
\noindent
{\bf 2020 Mathematics Subject Classification.} 
05E30,  15A21, 15B10.

\end{document}